\newcommand{\E}{\mathbb{E}}
\newcommand{\nfin}{{n_p}}
\newcommand{\npar}{{n}}
\newcommand{\Indicator}[1]{\mathds{1}\left(#1\right)}
\renewcommand{\P}{\mathbb{P}}
\newcommand{\R}{\mathbb{R}}
\newcommand{\Var}{\mathbb{V}\text{\textnormal{ar}}}
\newcommand{\Cov}{\mathbb{C}\text{\textnormal{ov}}}
\newcommand{\tr}{\text{\textnormal{tr}}}
\newcommand{\Tr}{\text{\rm Tr}}
\newcommand{\Proj}{{\textnormal{\textrm{Proj}}}} 
\newcommand{\proj}{{\textnormal{\textrm{proj}}}} 
\newcommand{\Lone}{\stackrel{\text{\textnormal{L\textsubscript{1}}}}{\longrightarrow}}
\newcommand{\lone}{\stackrel{\text{\textnormal{L\textsubscript{1}}}}{\to}}
\newcommand{\op}{{\rm op}}
\newcommand{\lin}{{\rm lin}}
\newcommand{\quadratic}{{\rm quad}}
\newcommand{\fin}{{\rm fin}}
\newcommand{\one}{{\bf 1}}
\newcommand{\iCE}{{\textnormal{\textrm{iCE}}}} 
\theoremstyle{plain}
\newtheorem{lemma}{Lemma}[section]
\newtheorem{prop}[lemma]{Proposition}
\newtheorem{corollary}[lemma]{Corollary}
\newtheorem{thm}[lemma]{Theorem}
\theoremstyle{definition}
\newtheorem{assumption}{Assumption}
\begin{document}

\title{Phase transition for conditional covariance matrices estimated by importance sampling, and implications for cross-entropy schemes in high dimension}

\author{J.\ Beh$^{1,2}$ \and J.\ Morio$^{1}$ \and F.\ Simatos$^{2}$}
\date{%
    $^1${\small \it ONERA/DTIS, Université de Toulouse, F-31055 Toulouse}\\%
    $^2${\small \it Fédération ENAC ISAE-SUPAERO ONERA, Université de Toulouse, 31000 Toulouse}\\[2ex]%
    \today
}

\maketitle

\begin{abstract}
Motivated by the estimation of covariance matrices by importance sampling arising in the cross-entropy (CE) algorithm, we study a random matrix model $\hat \Sigma = {\bf X} L {\bf X}^\top$ with two distinct features: $\bf X$ and $L$ are dependent, and $L$ is heavy-tailed. In the high-dimensional regime $d \to \infty$, we prove under suitable assumptions that a phase transition occurs in the polynomial regime $n = d^\kappa$, with $n$ the sample size. Namely, we prove that $\lVert \hat \Sigma - \E \hat \Sigma \rVert_\op \Rightarrow 0$ if and only if $\kappa > \kappa_*$ for some threshold $\kappa_*$ determined by the behavior of the maximum likelihood ratios. Moreover, we identify general situations where $\kappa_* = 1/\lambda_1$, with $\lambda_1$ the smallest eigenvalue of the covariance matrix of the auxiliary distribution used to estimate $\hat \Sigma$ by importance sampling. This suggests that importance sampling will work better with covariance matrices having a large smallest eigenvalue. We carry this insight into recent CE schemes proposed to estimate the probability of high-dimensional rare events. Through numerical simulations, we demonstrate that better CE schemes are also the ones with larger smallest eigenvalue, even though these algorithms were not designed to smooth the spectrum. This new spectral interpretation raises stimulating questions and opens research directions for the design of efficient high-dimensional algorithms.
\end{abstract}

\section{Introduction}
\subsection{Motivation: cross-entropy in high dimension}

Consider the problem of estimating the probability $p = \P(X \in A)$ of an event $A \subset \R^d$. In the reliability context where $p$ is small, i.e., $A$ is rare, the naive Monte Carlo method is not efficient and one has to resort to better numerical schemes. for instance adaptive splitting methods such as subset simulation or adaptive importance sampling methods such as cross-entropy. Importance sampling is a general technique widely used in this context. It estimates $p$ from $\nfin$ i.i.d.\ samples $X_i$ drawn according to an auxiliary distribution $g$ in the following way:
\begin{equation} \label{eq:IS}
	\hat p = \frac{1}{\nfin} \sum_{k=1}^\nfin \frac{f(X_i)}{g(X_i)} \xi_A(X_i)
\end{equation}
where $\xi_A$ is the indicator function of the set $A$ and $f$ is the original density of interest, so that $p = \int_{\R^d} \xi_A f$. 

The choice of the auxiliary distribution~$g$ is of uttermost importance for the efficiency of the importance sampling scheme. Here we are specifically interested in the behavior of the prominent cross-entropy (CE) algorithm, which is an iterative algorithm that outputs a family of auxiliary distributions $(\hat g_t, t = 0, 1, \dots)$. CE works for rare events of the form $A = \{x: \varphi(x) \geq 0\}$ for some measurable function $\varphi: \R^d \to \R$, so that the sought probability can be written as $p = \P(\varphi(X) \geq 0)$. The idea is to sequentially approach the set $\{\varphi \geq 0\}$ by simpler sets $\{\varphi \geq q_t\}$, with probabilities $\P(\varphi(X) \geq q_t)$ approaching $p$. A deterministic version of CE is presented in Algorithm~\ref{alg:CE-det} showing how the $q_t$'s are iteratively computed: at iteration $t$ with current auxiliary distribution $g_t$, $q_t$ is computed as the $\rho$-quantile of $\varphi(X)$ under~$g_t$, and the next auxiliary distribution $g_{t+1}$ is then the Gaussian distribution with mean and variance that of $f$ conditioned on $\varphi \geq q_t$. This deterministic version of CE involves the unknown parameters $g_t$, $q_t$, $A_t$, $\mu_t$ and $\Sigma_t$. The true CE scheme, presented in Algorithm~\ref{alg:CE}, is simply a stochastic version of Algorithm~\ref{alg:CE-det} where these unknown parameters are replaced by estimations $\hat g_t$, $\hat q_t$, $\hat A_t$, $\hat \mu_t$ and $\hat \Sigma_t$.
\begin{algorithm}[t]
\caption{One iteration of the deterministic version of CE} \label{alg:CE-det}
\begin{algorithmic}
\Require $\rho \in (0,1)$, current auxiliary distribution $g_t = N(\mu_t, \Sigma_t)$
\State 1. compute $q_t$ such that $\P_{g_t}(\varphi(X) \geq q_t) = \rho$;
\State 2. compute $\mu_{t+1}$ and $\Sigma_{t+1}$ the mean and variance of $f$ conditioned on $A_t = \{x: \varphi(x) \geq q_t\}$;
\State 3. iterate with $g_{t+1} = N(\mu_{t+1}, \Sigma_{t+1})$.
\end{algorithmic}
\end{algorithm}

\begin{algorithm}[t]
\caption{One iteration of CE} \label{alg:CE}
\begin{algorithmic}
\Require $\rho \in (0,1)$, current auxiliary distribution $\hat g_t = N(\hat \mu_t, \hat \Sigma_t)$, sample sizes~$m$ and $\npar$
\State 1. generate $Y_1, \ldots, Y_m$ i.i.d.\ according to $\hat g_t$, independently from the $Y$'s;
\State 2. rank the $Y$'s according to their values by $\varphi$: $\varphi(Y_{(1)}) \leq \cdots \leq \varphi(Y_{(m)})$ and define $\hat q_t = \varphi(Y_{(\lfloor (1-\rho) m\rfloor)})$ and $\hat A_t = \{x: \varphi(x) \geq \hat q_t\}$;
\State 3. generate $X_1, \ldots, X_\npar$ i.i.d.\ according to $\hat g_t$;
\State 4. compute
\begin{equation} \label{eq:Sigma-CE}
	\left\{ \begin{array}{rl}
		\hat \mu_{t+1} & \displaystyle = \frac{1}{\npar} \sum_{k=1}^{\npar} \ell(X_i) X_i\\
		\hat \Sigma_{t+1} & \displaystyle = \frac{1}{\npar} \sum_{k=1}^{\npar} \hat \ell(X_i) X_i X_i^\top - \hat \mu_{t+1} \hat \mu_{t+1}^\top
	\end{array} \right.
\end{equation}
where $\displaystyle \hat \ell = \frac{f \xi_{\hat A_t}}{\hat p_t \hat g_t}$ with $\displaystyle \hat p_t = \frac{1}{\npar} \sum_{k=1}^\npar \frac{f(X_i)}{\hat g_t(X_i)} \xi_{\hat A_t}(X_i)$;
\State 5. iterate with $\hat g_{t+1} = N(\hat \mu_{t+1}, \hat \Sigma_{t+1})$.
\end{algorithmic}
\end{algorithm}

We are interested in the behavior of this algorithm in the high-dimensional regime where $d \to \infty$, so all quantities of interest now implicitly rely on $d$. For instance, the original data $A$ and $f$ depend on the dimension, but this is not reflected in the notation to ease the readability. In high dimension, standard importance sampling techniques typically behave poorly. It is often observed in practice that the importance weights $\frac{f(X_i)}{g(X_i)}$ in~\eqref{eq:IS} degenerate, in the sense that one weight takes all the mass. In CE, the issue stems from the estimation~\eqref{eq:Sigma-CE} of the covariance matrix $\Sigma_t$ in step~$4$ of Algorithm~\ref{alg:CE}. Indeed, it is known that estimating high-dimensional covariance matrices is a very difficult problem~\cite{lam2019}, as it amounts to estimating a quadratic (in the dimension $d$) number of parameters. Thus, except if the sample size $\npar$ is large, the estimation $\hat \Sigma_t$ of $\Sigma_t$ will be noisy.

To overcome this problem, a recent stream of literature has proposed to reduce the dimension of the problem by projecting on subspaces of small dimensions. The general idea is to replace $\hat \Sigma_t$ in step 4 of Algorithm~\ref{alg:CE} by a covariance matrix that only updates variance terms in a small number of directions: typically, this amounts to replacing $\hat \Sigma_t$ by $\Proj_r(\hat \Sigma_t, {\bf v})$ where we define the following projection operator with ${\bf v} = (v_1, \dots, v_r)$ an orthonormal family:
\begin{equation} \label{eq:Proj}
	\Proj_r(\Sigma, {\bf v}) = \sum_{k=1}^r (\lambda_k - 1) v_k v_k^\top + I \ \text{ with } \ \lambda_k = v_k^\top \Sigma v_k.
\end{equation}
In words, $\Proj_r(\Sigma, {\bf v})$ keeps the same variance as $\Sigma$ in the directions $v_1, \ldots, v_r$ (the $\lambda_k$'s), while setting variance in the orthogonal subspace to the identity. The subscript $r$ refers to the dimension of the subspace which in practice is typically less than $3$. One should thus think of $\Proj_r(\Sigma, {\bf v})$ as a finite-rank perturbation of the identity, the so-called the spiked model. Algorithm~\ref{alg:CE-proj} presents a typical version of CE with projection: the main difference with Algorithm~\ref{alg:CE} is step~4, where the full covariance matrix $\hat \Sigma_{t+1}$ estimated by importance sampling is replaced by a simpler covariance matrix $\hat \Sigma^\proj_{t+1}$ obtained by projection. In particular, we see that in versions of CE with projection, the covariance matrices used in the intermediate Gaussian auxiliary densities $\hat g_t^\proj$ are finite-rank perturbations of the identity.
\begin{algorithm}[t]
\caption{One iteration of CE with projection}\label{alg:CE-proj}
\begin{algorithmic}
\Require $\rho \in (0,1)$, current auxiliary distribution $\hat g^\proj_t = N(\hat \mu^\proj_t, \hat \Sigma^\proj_t)$, sample sizes $m$ and $\npar$
\State 1. generate $Y_1, \ldots, Y_m$ i.i.d.\ according to $\hat g_t^\proj$ and define $\hat q_t$ and $\hat A_t$ from the $Y_i$'s similarly as in step 2 of Algorithm~\ref{alg:CE};
\State 2. generate $X_1, \ldots, X_\npar$ i.i.d.\ according to $\hat g_t^\proj$, independently from the $Y$'s, and compute $\hat p_t$, $\hat \mu_{t+1}$ and $\hat \Sigma_{t+1}$ from the $X_i$'s similarly as in step 4 of Algorithm~\ref{alg:CE} but with $\hat g^\proj_t$ instead of $\hat g_t$;
\State 3. compute the orthonormal family ${\bf v} = (v_1, \dots, v_r)$;
\State 4. define 
\begin{align}\label{eq:Sigma-proj}
\hat \mu^\proj_{t+1} = \hat \mu_{t+1} \text{ and } \hat \Sigma^\proj_{t+1} = \Proj_r(\hat \Sigma_{t+1}, {\bf v})
\end{align}
and iterate with $\hat g_{t+1}^\proj = N(\hat \mu^\proj_{t+1}, \hat \Sigma_{t+1}^\proj)$.
\end{algorithmic}
\end{algorithm}

Another motivation for considering spiked covariance matrices is that the rare event $A$ has often, in practice, a small intrinsic dimension, meaning that it can be described by a linear subspace of a much lower dimension than the dimension $d$ of the input. When $A = \{ \varphi \geq 0\}$ this means that $\varphi(x) = \varphi(Mx)$ for some $d \times d$ matrix $M$ with small rank (the intrinsic dimension). In this case, it is natural to only try and evaluate covariance terms in a small number of directions deemed influential. Such dimensionality-reduction behavior is widespread in signal processing, learning theory or data assimilation, to name only a few examples, see for instance the literature overview in~\cite{agapiou_importance_2017}.

Various propositions have been made for the projection directions ${\bf v}$, see for instance~\cite{el_masri2024, ELMASRI2021, uribe2021} and the discussion in~\cite[Section $2.3$]{beh2023insight}. In practice this choice has an important impact on the accuracy of the resulting estimator, but, to our knowledge, no theoretical result to date has assessed its influence on the performance of the final estimator $\hat p$.

\subsection{Estimation of covariance matrices by importance sampling}

As explained above, the high-dimensional performance of CE is strongly driven by the quality of the estimation of $\Sigma_t$ by importance sampling in~\eqref{eq:Sigma-CE}. Neglecting the rank-one term $\hat \mu_t \hat \mu_t^\top$, $\hat \Sigma_t$ follows a random matrix model that can be written as
\begin{equation} \label{eq:RMT}
	\hat \Sigma = \frac{1}{n} {\bf X} \hat L {\bf X}^\top
\end{equation}
with ${\bf X} = [X_1 \cdots X_\npar]$ the $\npar \times d$ matrix with columns $X_i$ and $\hat L$ the $n \times n$ diagonal matrix with entries $\hat \ell(X_i)$. This model, called the covariance matrix model, is quite standard in random matrix theory. It has mostly been studied assuming that $\hat L$ and $\bf X$ are independent with $\hat L$ light-tailed: under these assumptions, the relevant regime is where $n$ and $d$ grow proportionate, i.e., $d/n \to c$ for some $c \in (0,\infty)$~\cite{BAIK20061382, benaych-georges_eigenvalues_2011, capitaine2016spectrumdeformedrandommatrices, Paul_07}. However, these two assumptions do not hold here: $\hat L$ and $\bf X$ are not independent, and in high dimension, $\hat L$ is typically heavy-tailed. To the best of our knowledge, no existing result covers this case, and our main theoretical result makes progress in this direction. Namely, we show (see Theorem~\ref{thm:main} for a precise statement) that, when $A$ has a finite intrinsic dimension, an original phase transition occurs in the polynomial regime $n = d^\kappa$: for $\hat \Sigma$ as in~\eqref{eq:RMT}, the approximation $\hat \Sigma \approx \E(\hat \Sigma)$ is accurate if and only $\kappa > \kappa_*$ for some threshold $\kappa_*$ linked to the behavior of the maximum likelihood ratios $\hat \ell(X_i)$. From a technical standpoint, what allows us to control $\hat \Sigma - \E \hat \Sigma$ is that, under the spiked model and assuming that $A$ has a finite intrinsic dimension, $\hat L$ and $\bf X$ are only slightly dependent, and so we can essentially reduce the problem to the independent case where we can then use recent concentration results for the sum of independent random matrices~\cite{Brailovskaya24-0}.

\subsection{Paper organization}

The paper is organized as follows. Section~\ref{sec:main-thm} introduces the simplified random matrix model that serves as a proxy for covariance matrix estimation in CE schemes, and presents and discusses our main theoretical results. Section~\ref{sec:main-newlook} connects these results back to CE schemes through numerical experiments on classical benchmark examples. This provides a new perspective on the effect of projection strategies on their performance. Section~\ref{sec:conclu} outlines potential directions for future research opened by this new perspective. Finally, proofs are presented in Section~\ref{sec:proofs}.

\section{Phase transition for conditional covariance matrices estimated by importance sampling} \label{sec:main-thm}

\subsection{General notation}

In the rest of the paper, $N(\mu, \Sigma)$ denotes the Gaussian density with mean $\mu$ and covariance matrix $\Sigma$, with $I$ the identity matrix. For $f$ a density and $A \subset \R^d$ a measurable subset, $f|_A$ denotes the density $f$ conditioned on $A$: $f|_A = f \xi_A / p$ with $\xi_A$ the indicator of the set $A$ and $p = \P_f(X \in A)$. Here and in the sequel, when using probability or expectation operators, the subscript indicates the law of the corresponding variables, so for instance $\P_f(X \in A)$ means that $X$ is distributed according to $f$ (written $X \sim f$). When the law is not indicated as a subscript, the law of the relevant random variables is indicated in the text. For sequences of random variables, we denote by $\Rightarrow$ and $\Lone$ convergence in distribution and in L\textsubscript{1}, respectively.

For $W \subset \R^d$ a linear subspace, $P_W$ denotes the orthogonal projection on $W$ and $W_\perp$ denotes its orthogonal subspace. For a vector $x \in \R^d$, $\lVert x \rVert^2 = x^\top x = \langle x, x \rangle$ is the square L\textsubscript2 norm, with $\langle \cdot, \cdot \rangle$ the inner product. For a $d \times d$ matrix $M = (M_{ij})$, $\lVert \cdot \rVert_\op = \sup_{x: \lVert x \rVert = 1} \lvert x^\top M x \rvert$ denotes its operator norm, $\lvert M \rvert$ its determinant and $\Tr(M) = \sum_i M_{ii}$ its trace. When $M$ is symmetric with real eigenvalues, we denote by $\lambda_{\min}(M)$ and $\lambda_{\max}(M)$ its minimal and maximal eigenvalues, respectively. Note that all quantities depend on $d$ but this is omitted from the notation for clarity. 

\subsection{Main results}

In the rest of the paper, we consider $f = N(0,I)$ and $A \subset \R^d$ measurable with $p = \P_f(X \in A)$. Let $\Sigma_A$ be the covariance matrix of~$f$ conditioned on $A$:
\[ \Sigma_A = \E_f( Y Y^\top \mid Y \in A) - \mu_A \mu_A^\top \ \text{ with } \ \mu_A = \E_f(Y \mid Y \in A). \]
We are interested in the estimation of $\Sigma_A$ by importance sampling using as auxiliary distribution a centered Gaussian distribution whose covariance matrix is a finite-rank perturbation of the identity. More precisely, we will consider the following estimator of $\Sigma_A$:
\begin{equation} \label{eq:hat-Sigma}
	\hat \Sigma_A = \frac{1}{np} \sum_{i=1}^n \ell(X_i) \xi_A(X_i) X_i X_i^\top - \mu_A \mu_A^\top
\end{equation}
with $\ell = f/g$ and the $X_i$'s i.i.d.\ sampled according to $g = N(0, \Sigma)$. We assume that $\Sigma$ satisfies the following assumption, which states that it follows the spiked model, i.e., it is a finite-rank perturbation of the identity. Note that this assumption allows for $\Sigma = I$ by taking $r = \lambda_1 = 1$: this case corresponds to the standard Monte Carlo scheme where $g = f$.

\begin{assumption} \label{ass:Sigma}
	For each $d$, the covariance matrix $\Sigma$ is given by
	\begin{equation} \label{eq:Sigma}
		\Sigma = \sum_{k=1}^r (\lambda_k - 1) v_k v_k^\top + I
	\end{equation}
	for some $r \geq 1$, $0 < \lambda_1 \leq \lambda_2 \leq \cdots \leq \lambda_r$ and some orthonormal family $(v_1, \ldots, v_r)$, whose span is defined as $V = \text{span}(v_1, \ldots, v_r)$. Moreover, as $d$ varies we have that:
	\begin{itemize}
		\item $\sup_d r < \infty$;
		\item $\lambda_1 = \lambda_{\min}(\Sigma)$ is independent of $d$ and satisfies $0 < \lambda_1 \leq 1$;
		\item $\sup_d \lambda_{\max}(\Sigma) < \infty$.
	\end{itemize}
\end{assumption}

The estimator~\eqref{eq:hat-Sigma} is inspired by the estimation step~\eqref{eq:Sigma-proj} in CE with projection. The main differences are that 1/ we assume in~\eqref{eq:hat-Sigma} that the probability $p$ and the mean $\mu_A$ are known, whereas they are estimated in~\eqref{eq:Sigma-proj}, and 2/ the auxiliary distribution is not centered in~\eqref{eq:Sigma-proj}. The simplified model~\eqref{eq:hat-Sigma} allows to focus on the main difficulty in~\eqref{eq:Sigma-proj}, and our results could probably be extended with estimations of $p$ and $\mu_A$ and non-centered $g$ at the expense of more involved technical derivations but with no additional insight.

Our second assumption concerns the rare event of interest $A$: for reasons outlined in the introduction, we assume that it has a finite intrinsic dimension.

\begin{assumption} \label{ass:FID}
	For each $d$, there exists a linear subspace $U \subset \R^d$ such that
	\begin{equation} \label{eq:FID}
		\forall x \in \R^d, \ x \in A \Leftrightarrow P_{U} x \in A.
	\end{equation}
	Moreover, as $d$ varies we have $\sup_d \dim(U) < \infty$.
\end{assumption}

Thus, two subspaces play a key role:
\begin{itemize}
	\item $U$, which is enough to describe the set of interest $A$;
	\item and $V$, the set of directions in which variance terms are updated in the auxiliary distribution~$g$. 
\end{itemize}
Intuitively, we expect the estimation $\hat \Sigma_A \approx \Sigma_A$ to be better when $V$ is aligned with $U$. Indeed, Assumption~\ref{ass:FID} implies that $\Sigma_A x = x$ for $x \in U_\perp$ (see the forthcoming proof of Lemma~\ref{lemma:Sigma-A-mu-A-W}), which means that directions in the orthogonal of $U$ do not influence $\Sigma_A$. Thus, only the directions in $U$ are influential and so one should try to take the $v_k$'s belonging to $U$. 

However, although different choices for $\bf v$ have been proposed in the literature, the influence of this choice on the quality of the estimation has never been assessed theoretically. Here we make progress in this direction by studying the two extreme cases where either $V \subset U$ (``good'' case) or $V \subset U_\perp$ (``bad'' case). Our main result shows that, in both cases, a phase transition occurs in the polynomial regime $n = d^\kappa$: the estimation will be accurate if $\kappa > \kappa_*$ and not accurate if $\kappa < \kappa_*$. We show that the value of the threshold $\kappa_*$ is linked to the smallest eigenvalue $\lambda_1$ of~$\Sigma$ through the behavior of the maximum likelihood ratio appearing in~\eqref{eq:hat-Sigma}. To control the latter, we will make the following technical assumption.

\begin{assumption} \label{ass:MW}
	There exists $\gamma_* \geq 0$ such that:
	\begin{equation} \label{eq:MW}
		\frac{1}{n^\gamma} \max_{1 \leq i \leq n} \xi_A(X_i) \ell(X_i) \Rightarrow \left\{ \begin{array}{ll} 0 & \text{ if } \gamma > \gamma_*,\\
		+\infty & \text{ if } \gamma < \gamma_* \end{array} \right.
	\end{equation}
	where the $X_i$'s are i.i.d.\ drawn according to $g$. 
\end{assumption}

In the non-triangular case (no dependency on $d$), the fact that the maximum of $n$ i.i.d.\ random variables $Z_i$ grows polynomially is quite standard. More precisely, in the non-triangular case,~\eqref{eq:MW} holds with $\gamma_* = 0$ for light-tailed random variables, and when $Z_i$ is in the domain of attraction of an $\alpha$-stable distribution ($\alpha < 2$), then it holds with $\gamma_* = 1/\alpha$ (in this case, $\max_{i=1, \dots, n} Z_i$ grows like $n^{1/\alpha} s(n)$ for some slowly varying function~$s$). Thus, we assume that this standard behavior continues to hold in the triangular regime $d \to \infty$, which can be seen as a regularity assumption as the dimension increases. We can now state our main result.

\begin{thm} \label{thm:main}
	Assume that Assumptions~\ref{ass:Sigma},~\ref{ass:FID} and~\ref{ass:MW} hold, and let $\kappa_* = 1/(1-\gamma_*)$. Assume in addition that $\inf_d p > 0$ and that either $V \subset U$ or $V \subset U_\perp$. Then in the regime $n = d^\kappa$, the following phase transition holds:
	\begin{itemize}
		\item if $\kappa > \kappa_*$, then $\lVert \hat \Sigma_A - \Sigma_A \rVert_\op \Rightarrow 0$;
		\item if $\kappa < \kappa_*$, then $\lVert \hat \Sigma_A - \Sigma_A \rVert_\op \Rightarrow \infty$ and more precisely, we have $\lambda_{\max}(\hat \Sigma_A) \Rightarrow \infty$ while $\sup_d \lambda_{\max}(\Sigma_A) < \infty$.
	\end{itemize}
	Moreover, if $V \subset U_\perp$ then $\kappa_* = 1/\lambda_1$ while if $V \subset U$, then $1 \leq \kappa_* \leq 1/\lambda_1$.
\end{thm}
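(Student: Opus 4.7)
The plan is to decompose $\hat\Sigma_A$ as a sum of i.i.d.\ rank-one random matrices, verify that $\E \hat\Sigma_A = \Sigma_A$ by a change-of-measure argument, and then prove the two halves of the phase transition by (i) a matrix concentration bound in the regime $\kappa > \kappa_*$, and (ii) a single-term lower bound on $\lambda_{\max}(\hat\Sigma_A)$ in the regime $\kappa < \kappa_*$. The essential structural input is the spiked form of $\Sigma$: writing $X_i = P_V X_i + P_{V_\perp} X_i$ (independent Gaussians under $g$) and noting that $\Sigma^{-1} - I = \sum_k (\lambda_k^{-1} - 1) v_k v_k^\top$ is supported on $V$, the likelihood ratio $\ell(X_i)$ depends only on $P_V X_i$, while $\xi_A(X_i)$ depends only on $P_U X_i$ by Assumption~\ref{ass:FID}. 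Consequently, in the case $V \subset U_\perp$ the factors $\ell(X_i)$ and $\xi_A(X_i)$ are independent; in the case $V \subset U$ they are dependent, but the deterministic inequality $Z_i := \ell(X_i)\xi_A(X_i) \leq \ell(X_i)$ still holds.

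For the upper bound, I would fix any $\gamma \in (\gamma_*, 1 - 1/\kappa)$, which is non-empty precisely when $\kappa > \kappa_* = 1/(1 - \gamma_*)$, and set the truncation level $T = n^\gamma$. Assumption~\ref{ass:MW} guarantees that $\{\max_i Z_i \leq T\}$ has probability tending to one; together with the standard Gaussian concentration $\{\max_i \lVert X_i\rVert^2 \leq Cd\}$, on this event the summands $Y_i = (np)^{-1} Z_i X_i X_i^\top$ are uniformly bounded in operator norm by $O(Td/(np)) = O(d/(p\, n^{1-\gamma})) = o(1)$. The matrix variance $n \lVert \E Y_i^2 \rVert_\op$ is of the same order: from $Z_i^2 \leq T Z_i$ and the change of measure $\E_g[Z_i \lVert X_i\rVert^2 X_i X_i^\top] = p\, \E_f[\lVert X\rVert^2 XX^\top \mid X \in A]$, combined with the uniform bound $\sup_d \lambda_{\max}(\Sigma_A) < \infty$, one gets the same $d/n^{1-\gamma}$ scaling. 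The matrix Bernstein-type inequality of Brailovskaya and van Handel~\cite{Brailovskaya24-0} then yields $\lVert \hat\Sigma_A - \Sigma_A\rVert_\op \Rightarrow 0$, after checking that truncating $Z_i$ at $T$ perturbs the expected sum by $o(1)$, which reduces to a direct tail estimate on $\ell$ under $f$.

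For the lower bound, let $j$ be any index achieving $Z_{(n)} := \max_i Z_i$ and test $\hat\Sigma_A$ with $v = X_j/\lVert X_j\rVert$:
\begin{equation*}
    \lambda_{\max}(\hat\Sigma_A) \geq v^\top \hat\Sigma_A v \geq \frac{Z_{(n)} \lVert X_j\rVert^2}{np} - \lVert \mu_A\rVert^2.
\end{equation*}
Since both $\ell(X_i)$ and $\xi_A(X_i)$ depend only on the bounded-dimensional projection $P_{V+U} X_i$, the selection of $j$ is measurable with respect to $(P_{V+U} X_1, \ldots, P_{V+U} X_n)$; hence $P_{(V+U)_\perp} X_j$ remains $N(0, I)$ in dimension $d - O(1)$, and $\lVert X_j\rVert^2 \geq cd$ with probability tending to one. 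Combined with $Z_{(n)} \geq n^{\gamma_* - \epsilon}$ from Assumption~\ref{ass:MW}, the lower bound is of order $d\, n^{\gamma_* - \epsilon - 1}$, which diverges for every $\kappa < \kappa_*$ by letting $\epsilon \downarrow 0$. Since $\sup_d \lambda_{\max}(\Sigma_A) < \infty$ (a short consequence of finite $\dim U$ and $\inf_d p > 0$), this forces $\lVert \hat\Sigma_A - \Sigma_A\rVert_\op \Rightarrow \infty$.

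Finally, to identify $\gamma_*$ in terms of $\lambda_1$, I would compute
\begin{equation*}
    \log \ell(X) = \tfrac{1}{2}\log \lvert \Sigma \rvert + \tfrac{1}{2}\sum_{k=1}^r (1-\lambda_k) \frac{\langle X, v_k\rangle^2}{\lambda_k},
\end{equation*}
where $\langle X, v_k\rangle/\sqrt{\lambda_k}$ are i.i.d.\ $N(0,1)$ under $g$. The heaviest tail comes from the term with the largest positive coefficient, namely $k=1$ with coefficient $1 - \lambda_1$, giving $\P_g(\ell(X) > t) \sim c (\log t)^{-1/2} t^{-1/(1-\lambda_1)}$. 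In the case $V \subset U_\perp$, the independence of $\ell(X)$ and $\xi_A(X)$ reduces $\max_i Z_i$ to the maximum of $\approx np$ i.i.d.\ copies of $\ell(X)$, which grows like $n^{1-\lambda_1}$ up to a slowly varying factor; this yields $\gamma_* = 1 - \lambda_1$ and hence $\kappa_* = 1/\lambda_1$. In the case $V \subset U$, the inequality $Z_i \leq \ell(X_i)$ gives $\gamma_* \leq 1 - \lambda_1$ and hence $\kappa_* \leq 1/\lambda_1$, while $\gamma_* \geq 0$ from Assumption~\ref{ass:MW} gives $\kappa_* \geq 1$. The step I expect to be most delicate is the upper-bound concentration: because $\ell$ is heavy-tailed, standard matrix Bernstein does not apply off the shelf, and one must carefully marry the truncation at $T = n^\gamma$ with a precise accounting of how the (mild) dependence between the weights and the $X_i$'s perturbs both the truncated mean and the matrix-variance estimate.
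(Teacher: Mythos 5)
Your proposal is correct in outline and recovers the same threshold, but your treatment of the regime $\kappa>\kappa_*$ takes a genuinely different route from the paper's. The paper first decomposes the operator-norm deviation into a $W$-block, a $W_\perp$-block, and cross-terms (where $W=U$ or $W=V$ according to the case): it controls the bounded-dimensional $W$-block by an entrywise triangular law of large numbers, the cross-terms by a direct second-moment estimate, and the $W_\perp$-block by conditioning on $n'$ and on $(P_WX'_j)_j$, under which each rank-one summand becomes $T_jT_j^\top$ with $T_j$ conditionally Gaussian (or a conditional Gaussian restricted to a set when $V\subset U_\perp$), so that~\cite[Theorem~3.21]{Brailovskaya24-0} --- or its conditional extension established in the appendix --- applies directly, with no truncation. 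You instead truncate the weights $Z_i$ at $T=n^\gamma$ for a fixed $\gamma\in(\gamma_*,1-1/\kappa)$, apply a generic bounded-range matrix Bernstein inequality to the independent rank-one summands, and control the truncation bias via a change of measure and a tail estimate on $\ell$ under $f$. Your route is arguably more elementary (it bypasses both the Gaussian-specific concentration result and its conditional extension), and the exponent accounting is right: the window $(\gamma_*,1-1/\kappa)$ is non-empty exactly when $\kappa>\kappa_*$, and the range $R\asymp Td/n$ and matrix variance $\sigma^2\asymp Td/n$ then go to zero polynomially, so the logarithms in the Bernstein bound are swallowed. What you leave as a sketch is precisely where the work sits: one has to verify that truncating $Z_i$ (and $\lVert X_i\rVert^2$) perturbs $\E\hat\Sigma_A$ by $o(1)$ \emph{in operator norm}, and that the variance estimate holds uniformly over directions including those in $W$, where $\langle v,X_i\rangle$ and $\ell(X_i)$ are correlated; both should follow from the identity $\E_g[\ell\,\xi_A\,h(X)]=p\,\E_f[h(Y)\mid Y\in A]$ combined with $\sup_d\lambda_{\max}(\Sigma_A)<\infty$ from~\eqref{eq:inf-p} and the moment control of Lemma~\ref{lemma:moments}, but the case distinction $v\in W$ versus $v\in W_\perp$ is not automatic. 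Your lower bound and your identification of $\gamma_*$ are essentially the paper's arguments; in particular, your observation that the argmax index is measurable with respect to a bounded-dimensional projection of the sample, so the orthogonal projection remains a full-dimensional standard Gaussian, is exactly the mechanism behind Lemma~\ref{lemma:W}.
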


The assumption $\inf_d p > 0$ is crucial for our results. This assumption is common in previous work studying the high-dimensional behavior of importance sampling, see the discussion in~\cite[Section $2.4$]{beh2023insight}. To our knowledge, the case $p \to 0$ is essentially unknown, and widely different behavior requiring significant different arguments could arise. Importantly, it was proved in~\cite[Corollary $4.2$]{beh2023insight} that the assumption $\inf_d p > 0$ implies that
\begin{equation} \label{eq:inf-p}
	\sup_d \lVert \mu_A \rVert < \infty \ \text{ and } \ 0 < \inf_d \lambda_{\min}(\Sigma_A) \leq \sup_d \lambda_{\max}(\Sigma_A) < \infty.
\end{equation}

The next result shows that the range $[1,1/\lambda_1]$ for $\kappa_*$ in the case $V \subset U$ is sharp, i.e., every value can be attained.

\begin{prop} \label{prop:range}
	Assume that $A$ is of the form $A = \{x \in \R^d: \lvert \langle u, x \rangle \rvert \leq K \}$ for some $u \in \R^d$ with $\lVert u \rVert = 1$ and $K > 0$. Then Assumption~\ref{ass:FID} is satisfied with $U = {\rm span}(u)$, and for any fixed $\lambda_1 \in (0,1)$, Assumption~\ref{ass:Sigma} is also satisfied if $V = U$. Moreover, if $K = 1 + \sqrt{2 \alpha \lambda_1 \log n}$ for some $\alpha \in (0,1]$, then $\inf_d p > 0$ and Assumption~\ref{ass:MW} is satisfied with $\gamma_* = \alpha (1-\lambda_1)$.
\end{prop}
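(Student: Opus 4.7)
The plan is to verify the four claims in turn; only the verification of Assumption~\ref{ass:MW} requires real work.

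\emph{Assumptions~\ref{ass:FID},~\ref{ass:Sigma}, and $\inf_d p > 0$.} Since $A$ depends on $x$ only through $\langle u, x\rangle$ and $\langle u, P_U x\rangle = \langle u, x\rangle$, the equivalence $x \in A \iff P_U x \in A$ is immediate, giving Assumption~\ref{ass:FID}. Taking $V = U$ yields $\Sigma = (\lambda_1 - 1) u u^\top + I$, a rank-$1$ perturbation of the identity with eigenvalues $\lambda_1 \leq 1$ and $1$, so Assumption~\ref{ass:Sigma} holds. Finally, $\langle u, X\rangle$ is standard normal under $f$ and $K \geq 1$, so $p \geq \P(|N(0,1)| \leq 1) > 0$ uniformly in $d$.

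\emph{Reduction and upper bound for Assumption~\ref{ass:MW}.} I would first reduce the maximum likelihood ratio to a one-dimensional problem by computing explicitly
\[
\ell(x) = \sqrt{\lambda_1} \exp\!\left(\frac{1-\lambda_1}{2\lambda_1}\, \langle u, x\rangle^2\right).
\]
Setting $Z_i = \langle u, X_i\rangle$, which under $g$ are i.i.d.\ $N(0,\lambda_1)$ since $\Sigma u = \lambda_1 u$, one obtains
\[
M_n := \max_{1 \leq i \leq n} \xi_A(X_i)\ell(X_i) = \sqrt{\lambda_1}\exp\!\left(\frac{1-\lambda_1}{2\lambda_1}\, T_n\right),
\]
where $T_n = \max\{Z_i^2 : |Z_i| \leq K,\, i \leq n\}$. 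The trivial bound $T_n \leq K^2 = 2\alpha\lambda_1 \log n + O(\sqrt{\log n})$ then gives $M_n \leq \sqrt{\lambda_1}\, n^{\alpha(1-\lambda_1)} e^{O(\sqrt{\log n})}$, and hence $M_n/n^\gamma \to 0$ for any $\gamma > \gamma_* := \alpha(1-\lambda_1)$.

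\emph{Matching lower bound and main obstacle.} The key technical step, and the main obstacle, is to show that for any fixed $\eta > 0$ one has $T_n \geq (2\alpha - \eta)\lambda_1 \log n$ with probability tending to one. I would write
\[
\P(T_n < t_n^2) = \bigl(1 - 2[\bar\Phi(t_n/\sqrt{\lambda_1}) - \bar\Phi(K/\sqrt{\lambda_1})]\bigr)^n
\]
for $t_n = \sqrt{(2\alpha - \eta)\lambda_1 \log n}$, then estimate the two tails via the Mills ratio asymptotic $\bar\Phi(x) \sim \phi(x)/x$. A short computation shows $\bar\Phi(t_n/\sqrt{\lambda_1}) \asymp n^{-\alpha + \eta/2}/\sqrt{\log n}$ dominates $\bar\Phi(K/\sqrt{\lambda_1}) \asymp n^{-\alpha} e^{-\Theta(\sqrt{\log n})}/\sqrt{\log n}$, so that $n$ times their difference is at least of order $n^{1-\alpha + \eta/2}/\sqrt{\log n}$, which diverges. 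Consequently $\P(T_n \geq t_n^2) \to 1$, and $M_n \geq \sqrt{\lambda_1}\, n^{(1-\lambda_1)(\alpha - \eta/2)}$ with high probability; for $\gamma < \gamma_*$, choosing $\eta$ so that $(1-\lambda_1)(\alpha - \eta/2) > \gamma$ yields $M_n/n^\gamma \to \infty$. The delicate point is tracking the subexponential correction $e^{-\Theta(\sqrt{\log n})}$ hidden in $\bar\Phi(K/\sqrt{\lambda_1})$ carefully, especially in the borderline case $\alpha = 1$ where the entire polynomial gain boils down to the exponent $\eta/2$.
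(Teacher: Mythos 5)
Your proof is correct and follows essentially the same route as the paper's: reduce to the maximum of truncated one-dimensional Gaussians via the explicit form of $\ell$, use the deterministic cap $K^2$ for the upper bound on the exponent (the paper phrases this as $y > K'^2$ for large $n$, forcing the relevant probability to vanish), and invoke the Mills-ratio asymptotic $\bar\Phi(t) \sim \phi(t)/t$ to show the tail near $t_n$ dominates the tail at $K$ for the lower bound. The paper avoids the $\eta$-slack by computing the exact distribution function of the truncated maximum directly, but the two arguments are the same at heart, and your worry about the $e^{-\Theta(\sqrt{\log n})}$ correction at $\alpha=1$ resolves exactly as you anticipate.
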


Of course, this result is very artificial as we have $p \to 1$, nonetheless it serves its purpose of showing that every value between $1$ and $1/\lambda_1$ is possible for $\kappa_*$. A slightly more interesting example is when $A = \{x: \langle x, u \rangle \geq K\}$ for some fixed $K \in \R$, not depending on $d$. In this case, even in the ``good'' case $V = {\rm span}(u)$, it is not hard to prove that $\kappa_* = 1/\lambda_1$. In particular, it is striking to realize that, depending on the set $A$, the good choice $V \subset U$ may not improve the performance of the bad choice $V \subset U_\perp$, with both needing $n \gg d^{1/\lambda_1}$ to behave correctly.

\section{Efficiency of recent CE schemes: a new spectral interpretation} \label{sec:main-newlook}

\subsection{Implications of Theorem~\ref{thm:main} for CE schemes with projection} \label{sub:implications}

Theorem~\ref{thm:main} has direct implications for CE schemes with projection described in Algorithm~\ref{alg:CE-proj}. Indeed, step~2 of Algorithm~\ref{alg:CE-proj} involves the estimation of the covariance matrix $\Sigma_{t+1} = \Sigma_{A_t}$ by importance sampling with auxiliary distribution $\hat g^\proj_t = N(\hat \mu^\proj_t, \hat \Sigma^\proj_t)$: since $\hat \Sigma^\proj_t$ is obtained by projection~\eqref{eq:Proj}, by construction it follows the spiked model of Assumption~\ref{ass:Sigma}. In other words, Theorem~\ref{thm:main} sheds light on CE schemes with projection when considering $A = A_t$ and $\Sigma = \hat \Sigma^\proj_t$. In particular, the prime importance of $\lambda_1 = \lambda_{\min}(\Sigma)$ discussed after Theorem~\ref{thm:main} suggests that Algorithm~\ref{alg:CE-proj} will behave better if $\lambda_{\min}(\hat \Sigma^\proj_t)$ is larger: indeed, a small smallest eigenvalue translates to a large sample size needed to accurately learn the target covariance matrix. To a lesser extent, the assumption $\sup \lambda_{\max}(\Sigma) < \infty$ in Assumption~\ref{ass:Sigma} also suggests that a small $\lambda_{\max}(\hat \Sigma^\proj_t)$ is beneficial. But in practice, projection schemes almost always only update variance terms in directions of small variance, leading to $\lambda_{\max}(\hat \Sigma^\proj_t) = 1$.

On this background, several projection directions $\bf v$ have recently been proposed. However, the rationale for this projection step is not to smooth the spectrum of the auxiliary covariance matrix, but rather to decrease the number of parameters to be estimated, thereby reducing noise while focusing on relevant parameters. Projection directions are then chosen according to some optimality criterion, typically minimizing the Kullback--Leibler divergence with $f|_A$.

In the following, we take a new look at these algorithms and pay special attention to the influence of the choice of projection directions on the spectrum of the estimated covariance matrices. Numerical results presented below show that projection directions that work well are also the ones with the largest smallest eigenvalue $\lambda_{\min}(\hat \Sigma^\proj_t)$. This is quite surprising since, as explained above, these algorithms were not designed to act on the spectrum. Nonetheless, this gives credit to our main result, Theorem~\ref{thm:main}, for the understanding of the performance of importance sampling schemes, and opens the way for the design of efficient importance sampling schemes in high dimensions.

\subsection{Spectral behavior of CE schemes}

\subsubsection{Numerical set-up}

\begin{table}[t]
	\begin{center}
	\begin{tabular}{|c|c|c|c|}
		\hline
		Test function & $\varphi_\lin$ & $\varphi_\quadratic$ & $\varphi_\fin$ \\\hline
		Dimension $d$ & $100$ & $334$ & $334$ \\\hline
		Probability $p$ & $2.9 \cdot 10^{-7}$ & $6.6 \cdot 10^{-6}$ & $1.8 \cdot 10^{-6}$ \\\hline
		Learning sample size $n$ & $10000$ & $5000$ & $5000$ \\\hline
		Final sample size $\nfin$ & \multicolumn{3}{c|}{$2000$} \\\hline
		Repetition $N$ & \multicolumn{3}{c|}{$200$} \\\hline
	\end{tabular}
	\end{center}
	\caption{Parameters used for the numerical simulations depending on the test function considered.}
	\label{tab:parameters}
\end{table}

To check numerically that efficient CE schemes have large $\lambda_{\min}(\hat \Sigma^\proj_t)$, we consider two CE schemes, two choices of projection directions and three functions $\varphi$ (recall that, in CE schemes, rare events are of the form $A = \{x \in \R^d: \varphi(x) \geq 0\}$).

\begin{algorithm}[t]
\caption{One iteration of improved CE (iCE) with projection}\label{alg:iCE-proj}
\begin{algorithmic}
\Require $\delta > 0$, current auxiliary distribution $\hat g^\iCE_t = N(\hat \mu^\proj_t, \hat \Sigma^\proj_t)$, $\hat \sigma_t$, sample sizes $m$ and $\npar$
\State 1. generate $Y_1, \ldots, Y_m$ i.i.d.\ according to $\hat g^\iCE_t$ and compute
\[ \hat \sigma_{t+1} = \arg \min_{\sigma \in (0, \hat \sigma_t)} (\hat \delta_t(\sigma) - \delta )^2, \]
where $\hat \delta_t(\sigma)$ is the empirical coefficient of variation of $F_N(\varphi(Y_i)/\sigma) \frac{f(Y_i)}{\hat g^\iCE_t(Y_i)}$:
\begin{equation}\label{eq:iCE-delta}
    \hat \delta_t(\sigma) = \sqrt{ m \sum_{i=1}^m \left( \frac{f(Y_i)}{\hat g^\iCE_t(Y_i)} F_N\left( \frac{\varphi(Y_i)}{\sigma} \right) \right)^2} \biggl/ \sum_{i=1}^m \frac{f(Y_i)}{\hat g^\iCE_t(Y_i)} F_N \left( \frac{\varphi(Y_i)}{\sigma} \right);
\end{equation}
\State 2. generate $X_1, \ldots, X_\npar$ i.i.d.\ according to $\hat g^\iCE_t$, independently from the $Y_k$'s, and compute $\hat \mu_{t+1}$ and $\hat \Sigma_{t+1}$ from the $X$'s similarly as in~\eqref{eq:Sigma} with
\[ \ell = \frac{f F_N(\varphi(\cdot)/\hat \sigma_{t+1})}{\hat E \hat g^\iCE_t} \ \text{ where } \hat E = \frac{1}{\npar} \sum_{i=1}^\npar \frac{f(X_i)}{\hat g^\iCE_t(X_i)} F_N(\varphi(X_i) / \hat \sigma_{t+1}); \]
\State 3. define $\hat \mu^\proj_{t+1} = \hat \mu_{t+1}$ and compute $ \hat \Sigma^\proj_{t+1}$ from $\hat \Sigma_{t+1}$ similarly as in step 3 of Algorithm~\ref{alg:CE-proj};
\State 4. iterate with $\hat g^\iCE_{t+1} = N(\hat \mu^\iCE_{t+1}, \hat \Sigma^\proj_{t+1})$.
\end{algorithmic}
\end{algorithm}
\begin{description}
	\item[CE schemes:] we consider two CE schemes: the standard CE scheme (Algorithm~\ref{alg:CE}) and also the improved CE scheme (iCE) proposed in~\cite{papaioannou_improved_2019}. The complete description of iCE (with projection) is provided in Algorithm~\ref{alg:iCE-proj} but the main idea is quite simple: it amounts to replacing the indicator~$\xi_A$ by a smooth approximation. This is the purpose of the terms $F_N(\varphi(\cdot) / \sigma)$ in Algorithm~\ref{alg:iCE-proj}, which satisfies $F_N(\varphi(\cdot) / \sigma) \approx \xi_A$ for small $\sigma$'s;
	\item[Projection directions:] we will compare CE and iCE without projection and with projection for two choices of projection directions. Each time, we consider for simplicity only one direction ($r = 1$). During iteration $t$, in step~3 of Algorithms~\ref{alg:CE-proj} and~\ref{alg:iCE-proj}, we will consider projecting on 1/ the eigenvector of $\hat \Sigma_{t+1}$ associated to its smallest eigenvalue and 2/ the current mean $\hat \mu_{t+1}$. The first choice is a simplified version of the algorithm proposed in~~\cite{el_masri2024}, while the second one was proposed in~\cite{ELMASRI2021}. This leads to three variants of CE and iCE denoted by CE, CE-eig and CE-mean for CE and iCE, iCE-eig and iCE-mean for iCE;
	\item[Functions $\varphi$:] we will test these six CE schemes on three different test functions, namely a linear function $\varphi_{\lin}(x) = x^\top \one - 5$ with $\one = \frac{1}{\sqrt d} (1, \ldots, 1)$, a quadratic function
	\[ \varphi_{\quadratic}(x) = x^\top \one - 4 - 1.25 (x(1) - x(2))^2 \]
	and a function stemming from applications in finance~\cite{bassamboo_portfolio_2008}:
	\[ \varphi_{\fin}(x) = \sum_{j=3}^d \Indicator{\phi(x(1),x(2),x(j)) \geq 0.5 \sqrt{d}} - 0.25 d - 0.1 \]
	where for any $(x, y, z) \in \R^3$, 
	\[\phi(x,y,x) = \left(0.25 \, x + 3(1 - 0.25^2)^{1/2} z \right) \sqrt{ F_{\Gamma(6,6)}^{-1}\left(F_{N}(y) \right)} \]
	with $F_{\Gamma(6,6)}$ the cumulative distribution function of the Gamma distribution with both shape and scale parameters set to $6$. The linear and quadratic test cases are standard functions considered in reliability analysis~\cite{ELMASRI2021, papaioannou_sequential_2016, PAPAKONSTANTINOU2023, uribe2021}. The financial function $\varphi_\fin$ is for instance considered in~\cite{bassamboo_portfolio_2008, chan_improved_2012, ELMASRI2021}: it brings additional insight compared to $\varphi_\lin$ and $\varphi_\quadratic$ because it is not differentiable. The two functions $\varphi_\lin$ and $\varphi_\quadratic$ have an intrinsic dimension equal to one and two, respectively, and thus satisfy Assumption~\ref{ass:FID}. Formally, $\varphi_\fin$ does not satisfy this assumption, but a closer look suggests that it does satisfy it approximately: indeed, if the components $X(i)$ are i.i.d., then the law of large numbers suggests that
	\[ \sum_{j=3}^d \Indicator{\phi(X(1),X(2),X(j)) \geq 0.5 \sqrt{d}} \approx d h(X(1), X(2)) \]
	 with $h(x,y) = \P(\phi(x,y, X) \geq 0.5 \sqrt d)$, suggesting that $\varphi_\fin$ approximately has an intrinsic dimension of (at most) two.
\end{description}

For each test function $\varphi$, different CE schemes lead to different estimators $\hat p$ of the form~\eqref{eq:IS}. Recall that $\nfin$ is the sample size used in this final estimation step, while $n$ refers to the sample size used to learn the auxiliary distribution in step~$2$ of Algorithms~\ref{alg:CE-proj} (CE with projection) and~\ref{alg:iCE-proj} (iCE with projection). To assess the accuracy of each estimator, we consider the distribution of the relative error $\lvert \hat p - p \rvert / p$. This unknown distribution is estimated by kernel regression (displayed as so-called violin plots) obtained by repeating each estimation $N = 200$ times. Table~\ref{tab:parameters} gathers the relevant parameters for the numerical simulations presented below.

\subsubsection{Numerical results}

Figures~\ref{fig:CE} and~\ref{fig:iCE} compare the performance of CE, CE-eig and CE-mean (Figure~\ref{fig:CE}) and of iCE, iCE-eig and iCE-mean (Figure~\ref{fig:iCE}). The goal on these two figures is to check the relevance of Theorem~\ref{thm:main} for true CE schemes and also to assess the influence of the choice of the projection directions. Let us first comment results for CE on Figure~\ref{fig:CE}.

\begin{figure}[t]
    \centering
    \hspace*{0.1\linewidth}\includegraphics[width=0.5\linewidth]{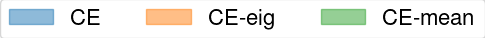} \newline
    \begin{subfigure}[c]{.33\textwidth}
        \centering
        \includegraphics[width=\linewidth]{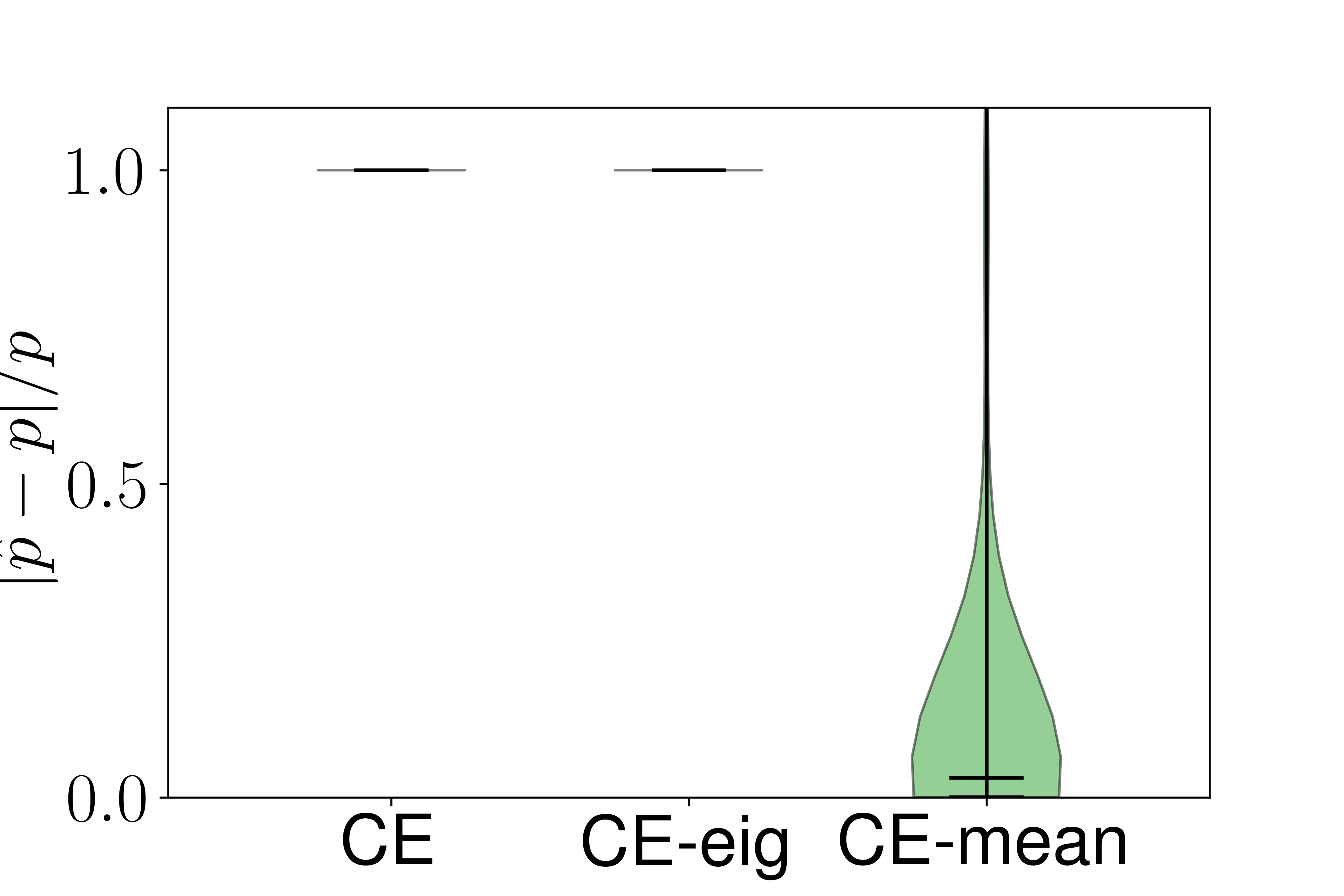}
        \includegraphics[width=\linewidth]{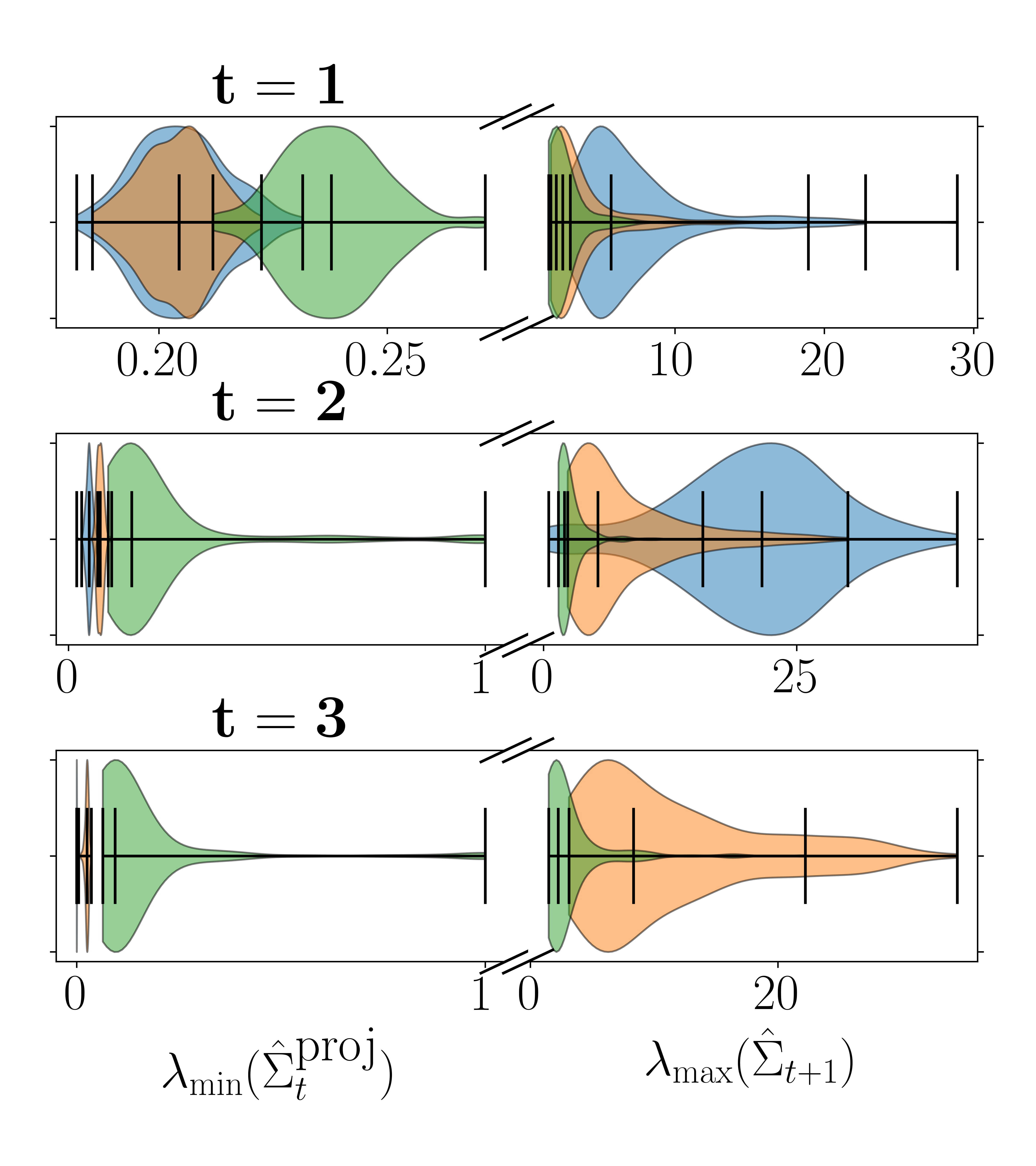}
        \caption{Results for $\varphi_\lin$.}
		\label{subfig:CE-lin}
    \end{subfigure}%
    \hfill
    \begin{subfigure}[c]{.33\textwidth}
        \centering
        \includegraphics[width=\linewidth]{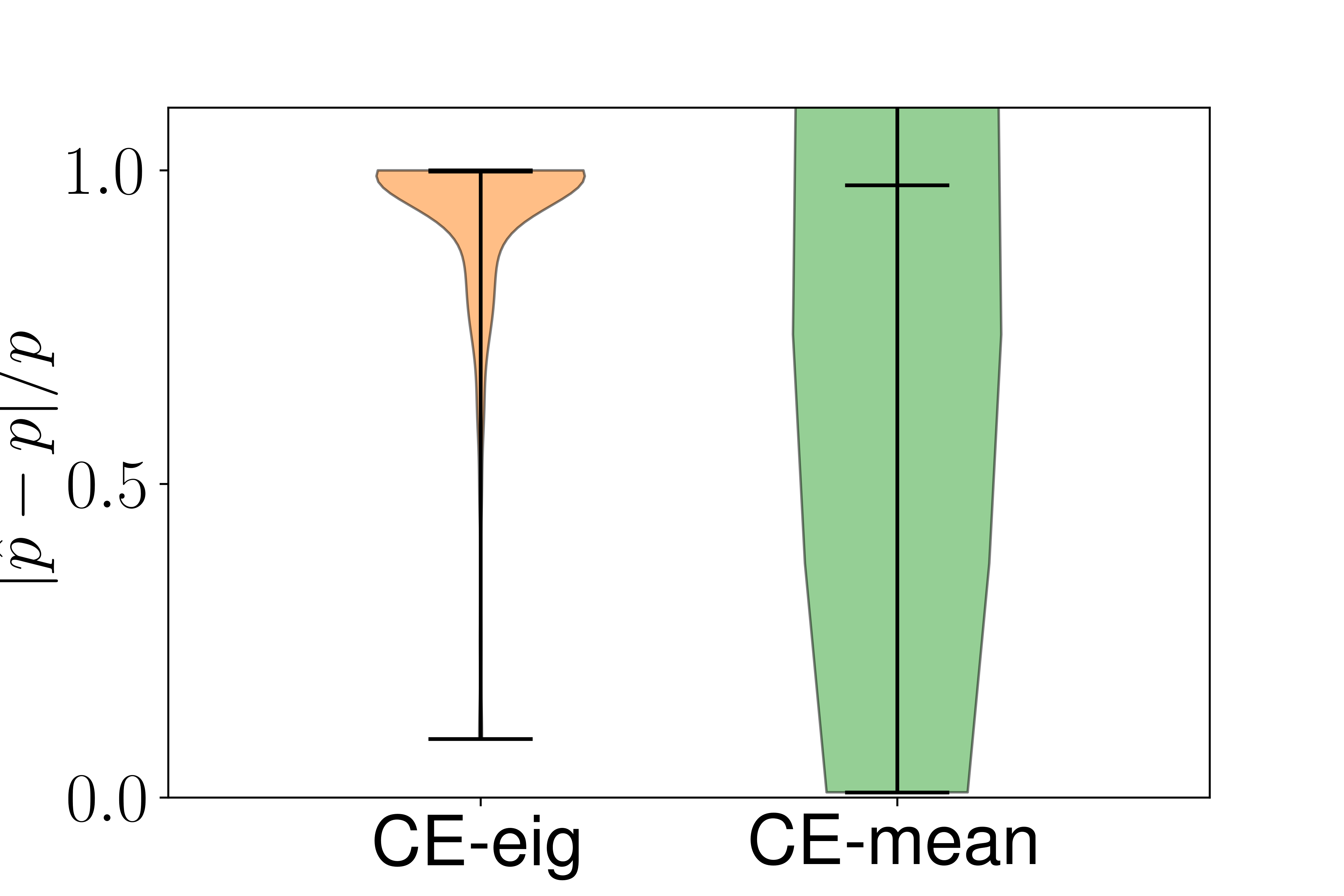}
        \includegraphics[width=\linewidth]{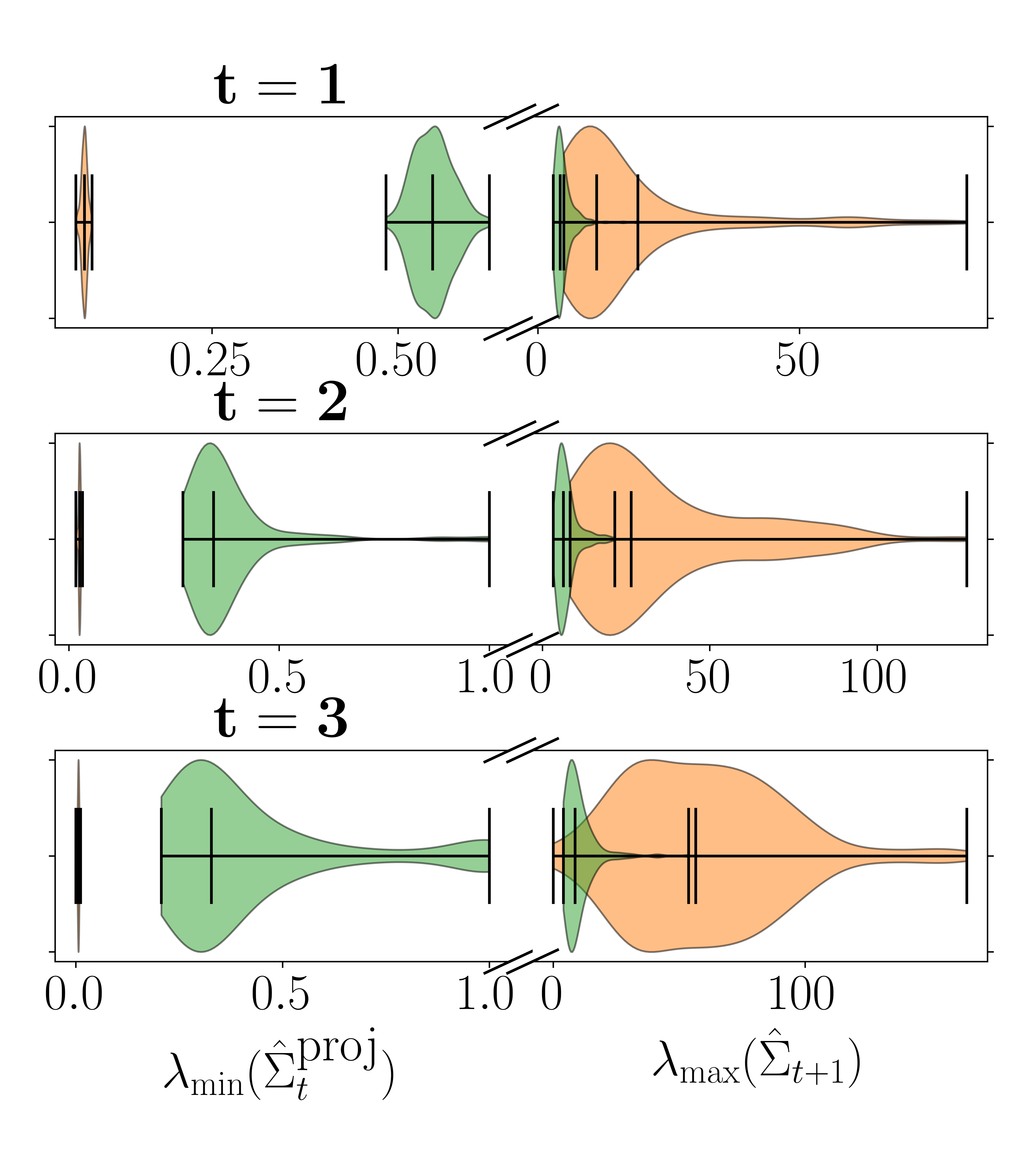}
        \caption{Results for $\varphi_\quadratic$.}
		\label{subfig:CE-quad}
    \end{subfigure}%
    \hfill
    \begin{subfigure}[c]{.33\textwidth}
        \centering
        \includegraphics[width=\linewidth]{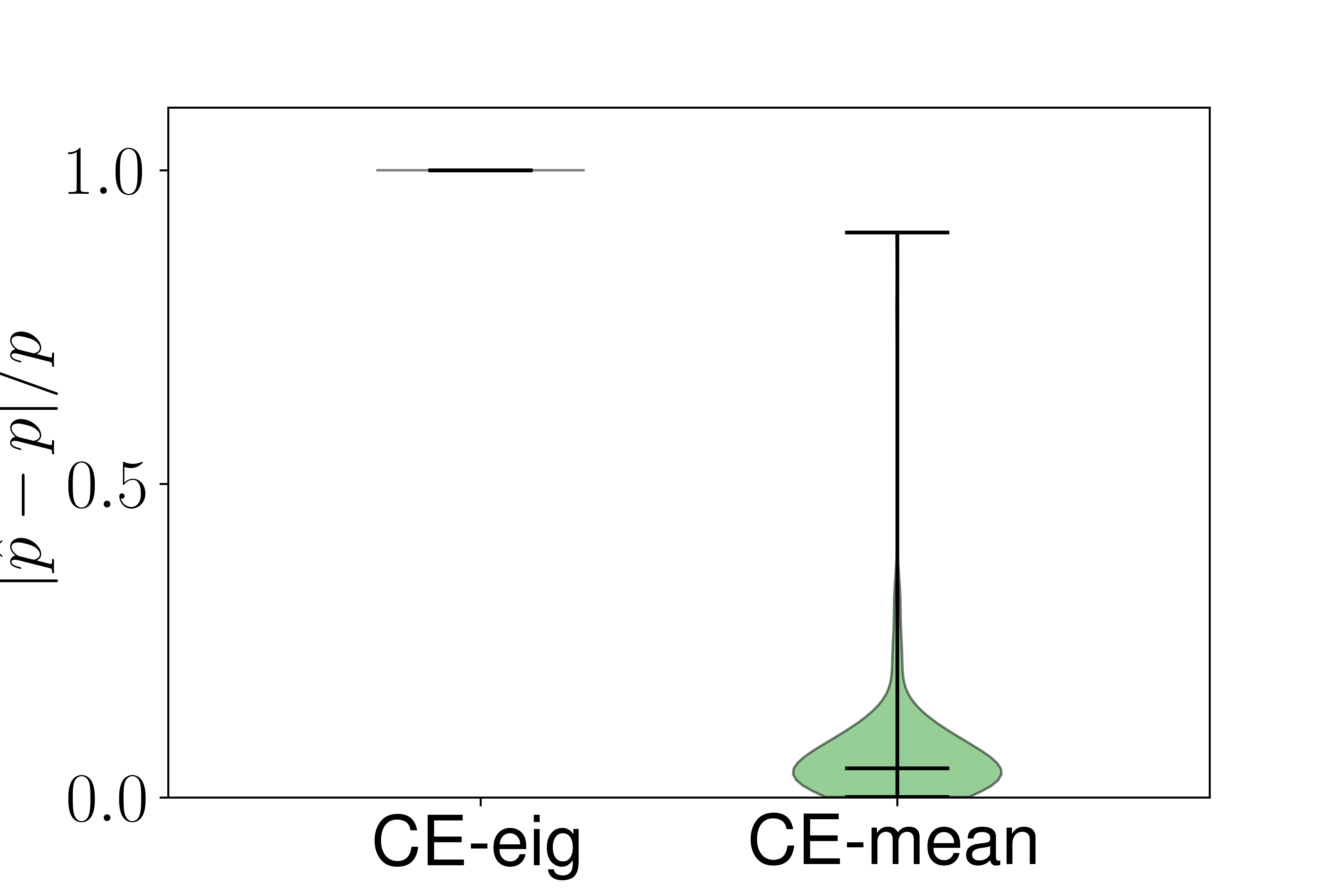}
        \includegraphics[width=\linewidth]{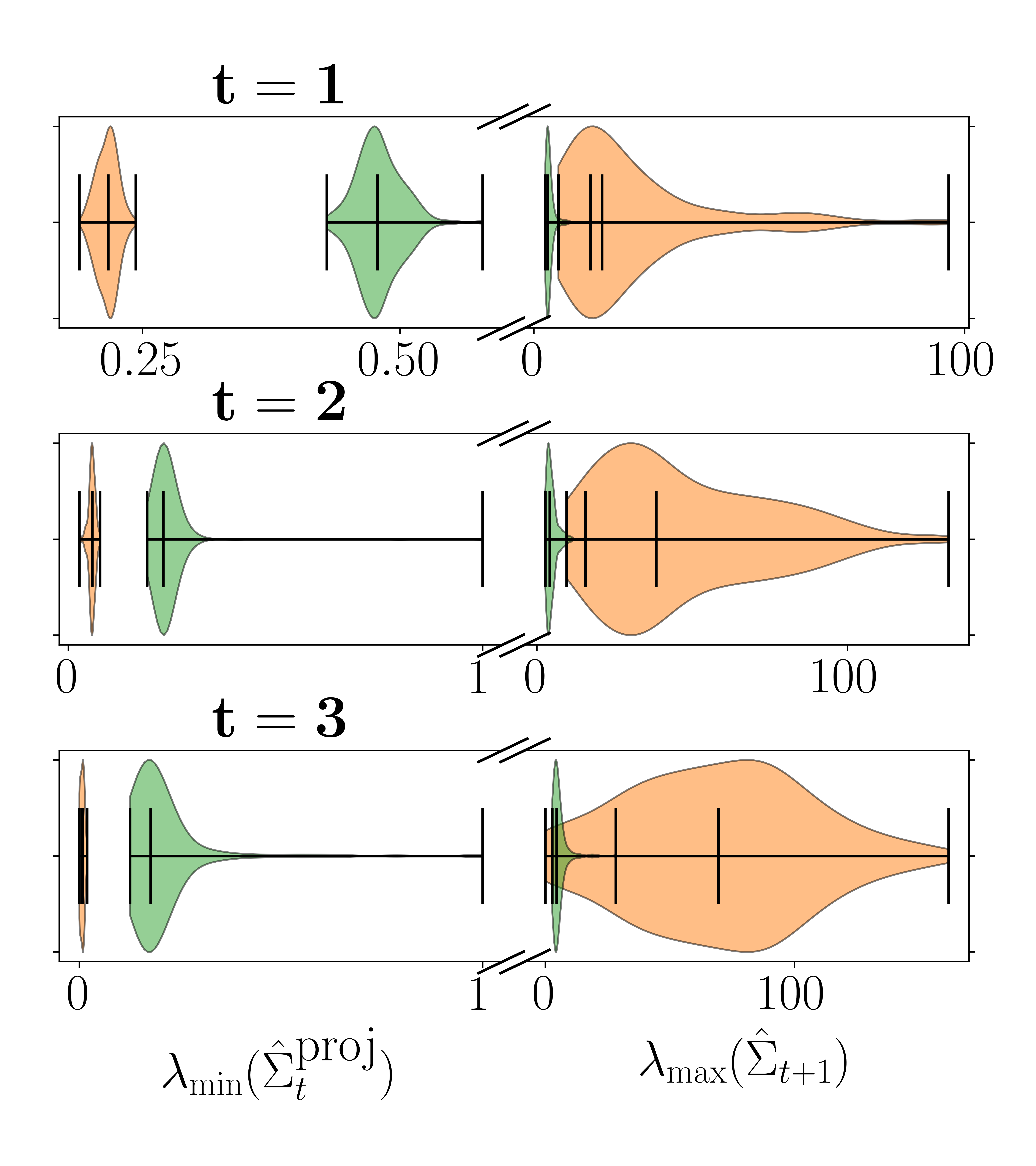}
        \caption{Results for $\varphi_\fin$.}
		\label{subfig:CE-fin}
    \end{subfigure}%
    \hfill
	\caption{Interpretation of the performance of CE, CE-eig and CE-mean via the spectral behavior. Results for CE on the test functions $\varphi_\quadratic$ and $\varphi_\fin$ are not displayed because this scheme did not converge. For similar reasons, results for CE is not displayed for $\lambda_{\max}(\hat \Sigma_4)$ on the test function $\varphi_\lin$ since most repetitions diverge beyond the third iteration. Top figures display the distribution of the relative error $\lvert \hat p - p \rvert / p$. Bottom figures show the distribution of $\lambda_{\min}(\hat \Sigma^\proj_t)$ and $\lambda_{\max}(\hat \Sigma_{t+1})$ for the first three iterations $t = 1,2$ and $3$.}
    \label{fig:CE}
\end{figure}

The top row shows that CE-mean is the best algorithm, with the density of the relative error $\lvert \hat p - p \rvert / p$ more concentrated toward $0$. On the three test cases, CE and CE-eig yield very poor results: algorithms either do not converge, or yield a relative error close to $100\%$. In contrast, CE-mean gives much better results. Performance for $\varphi_\lin$ and $\varphi_\fin$ is satisfactory. The quadratic case $\varphi_\quadratic$ remains challenging, but there is a substantial improvement from CE-eig to CE-mean.

The second row of Figure~\ref{fig:CE} gives an interpretation of these results by looking at the spectrum of the different covariance matrices during the first three iterations of the CE scheme. These results corroborate the previous discussion in Section~\ref{sub:implications}: we see that the best-performing algorithm, CE-mean, is also the one with the largest smallest eigenvalue $\lambda_{\min}(\hat \Sigma^\proj_t)$. Moreover, these numerical results suggest that the simplified model~\eqref{eq:hat-Sigma} yields relevant insight into CE schemes: indeed, we see that a small $\lambda_{\min}(\hat \Sigma^\proj_t)$ leads to a large $\lambda_{\max}(\hat \Sigma_{t+1})$, which is exactly the behavior pointed out in Theorem~\ref{thm:main}. The idea is that when the smallest eigenvalue of the covariance matrix of the auxiliary distribution is small, the sample size required for accurate estimation becomes very large. Although, as mentioned previously, the projection step typically only acts on small variance terms and makes $\lambda_{\max}(\hat \Sigma^\proj_{t+1}) = 1$, the fact that $\lambda_{\max}(\hat \Sigma_{t+1})$ is very large suggests that $\hat \Sigma_{t+1}$ does not satisfactorily estimate $\Sigma_{t+1}$. It is interesting to observe how this estimation error then propagates in the successive iterations: indeed, if $\Sigma_{t+1}$ is poorly estimated, then one may expect that $\hat \Sigma^\proj_{t+1}$ will also be rigged with noise. But as $\hat \Sigma^\proj_{t+1}$ is the covariance matrix of the next auxiliary distribution $\hat g_{t+1}$, this error is expected to propagate in the next iteration. And indeed, we clearly see in Figure~\ref{fig:CE} that poorly-performing schemes such as CE-eig have vanishing $\lambda_{\min}(\hat \Sigma^\proj_t)$ and diverging $\lambda_{\max}(\hat \Sigma_{t+1})$. Note that small $\lambda_{\min}$'s may seem like a good idea since the optimal covariance matrix $\Sigma_A$ has $\lambda_{\min}(\Sigma_A) \approx 0.0075$. Thus, the poor behavior of CE-eig may seem surprising at first glance, but Theorem~\ref{thm:main} sheds light on this phenomenon: a small $\lambda_{\min}$ gets closer to the optimal value $\lambda_{\min}(\Sigma_A)$ but also entails a more difficult estimation problem, requiring large sample size.
\\

Figure~\ref{fig:iCE}
\begin{figure}[!t]
    \centering
    \hspace*{0.1\linewidth}\includegraphics[width=0.5\linewidth]{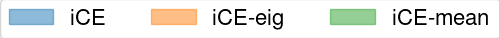} \newline
    \begin{subfigure}[c]{.33\textwidth}
        \centering
        \includegraphics[width=\linewidth]{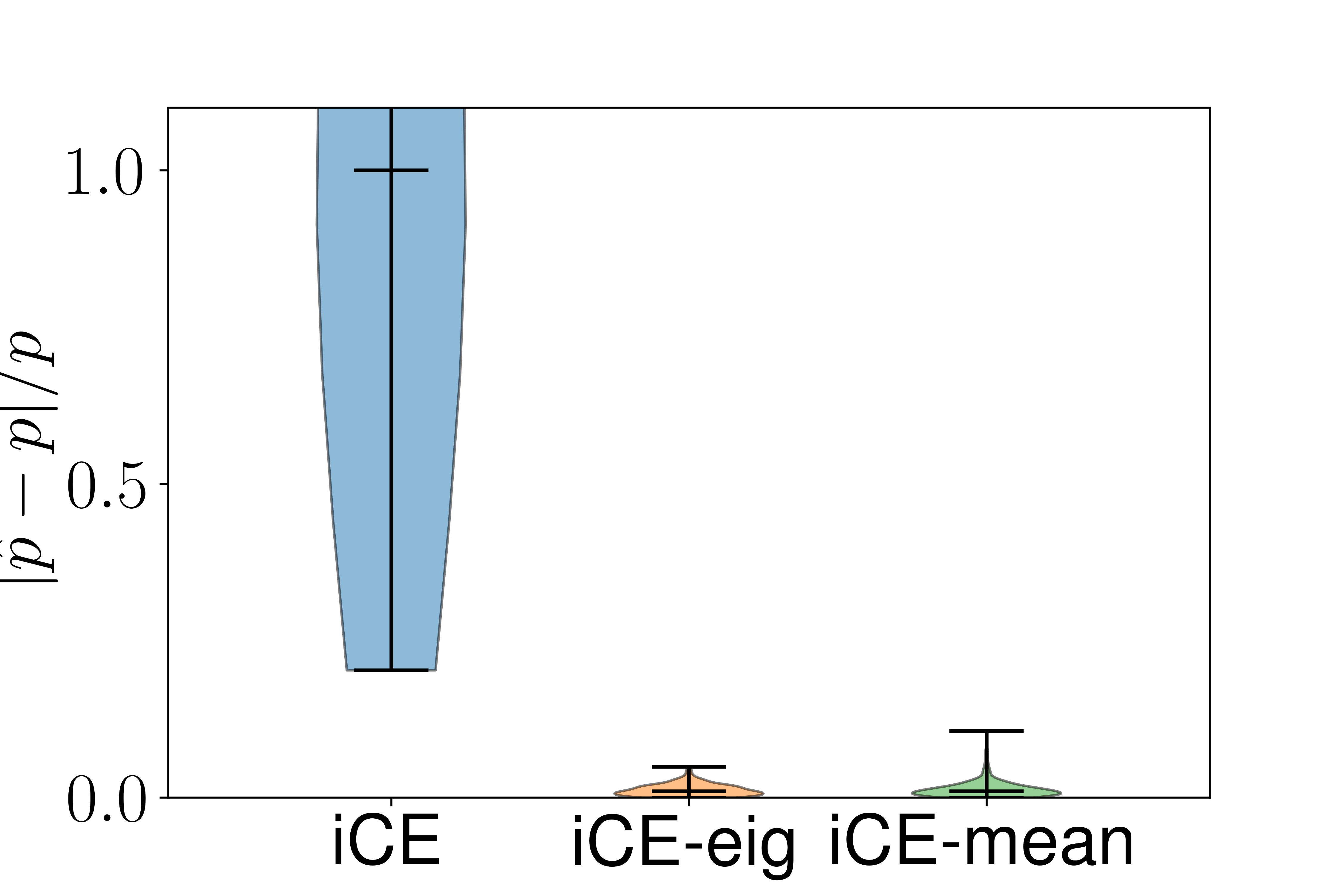}
        \includegraphics[width=\linewidth]{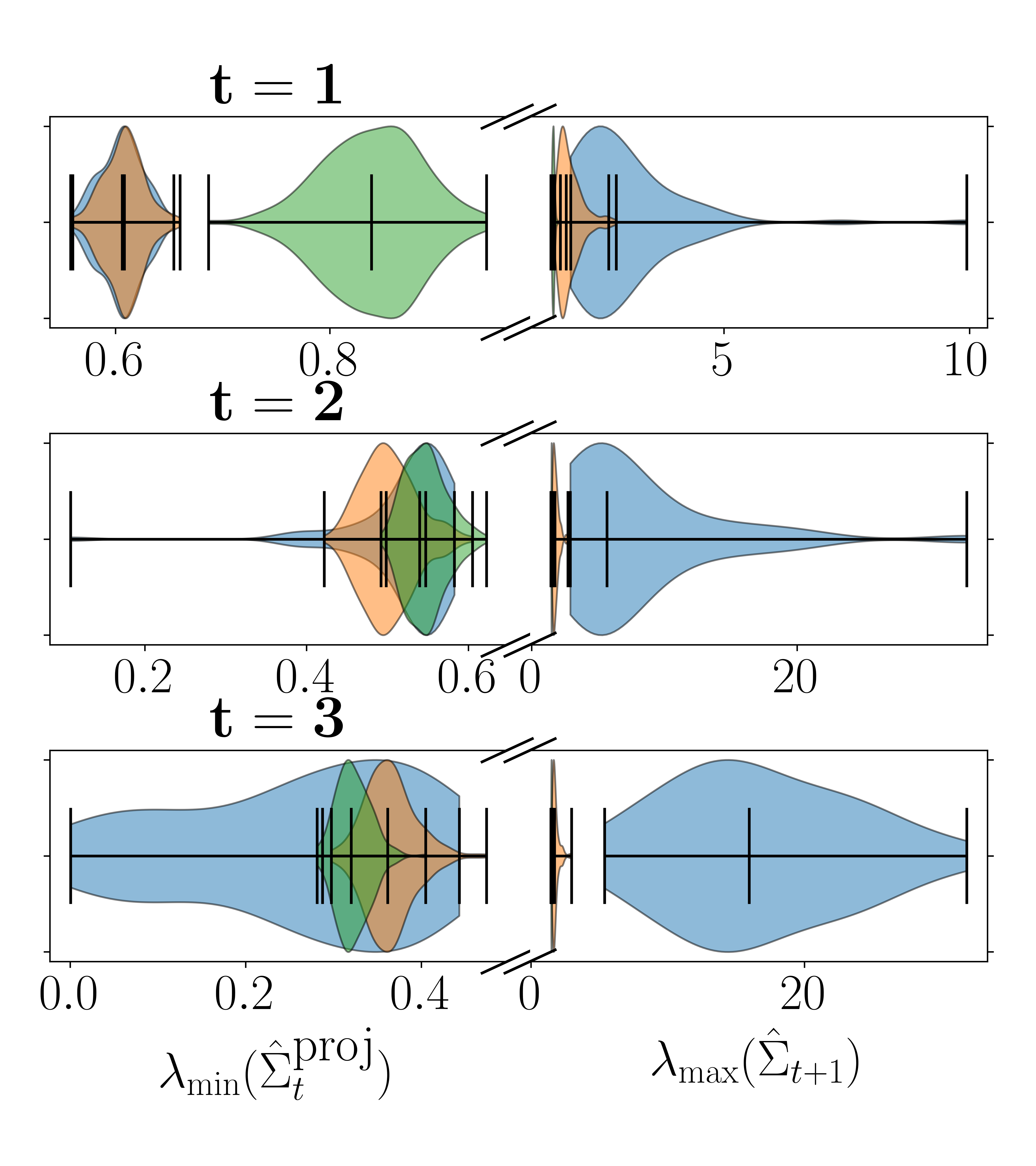}
        \caption{Results for $\varphi_\lin$.}
		\label{subfig:iCE-lin}
    \end{subfigure}%
    \hfill
    \begin{subfigure}[c]{.33\textwidth}
        \centering
        \includegraphics[width=\linewidth]{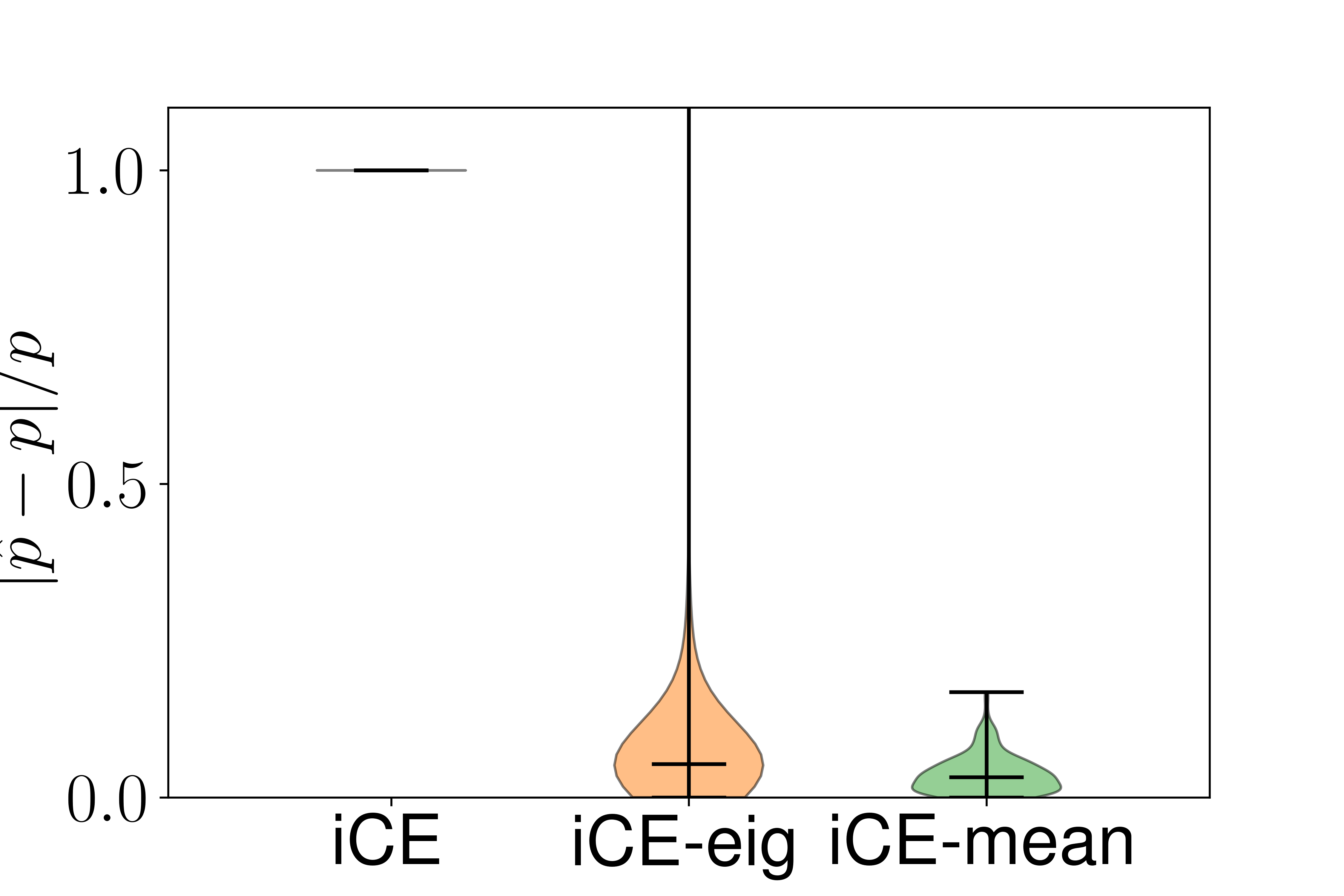}
        \includegraphics[width=\linewidth]{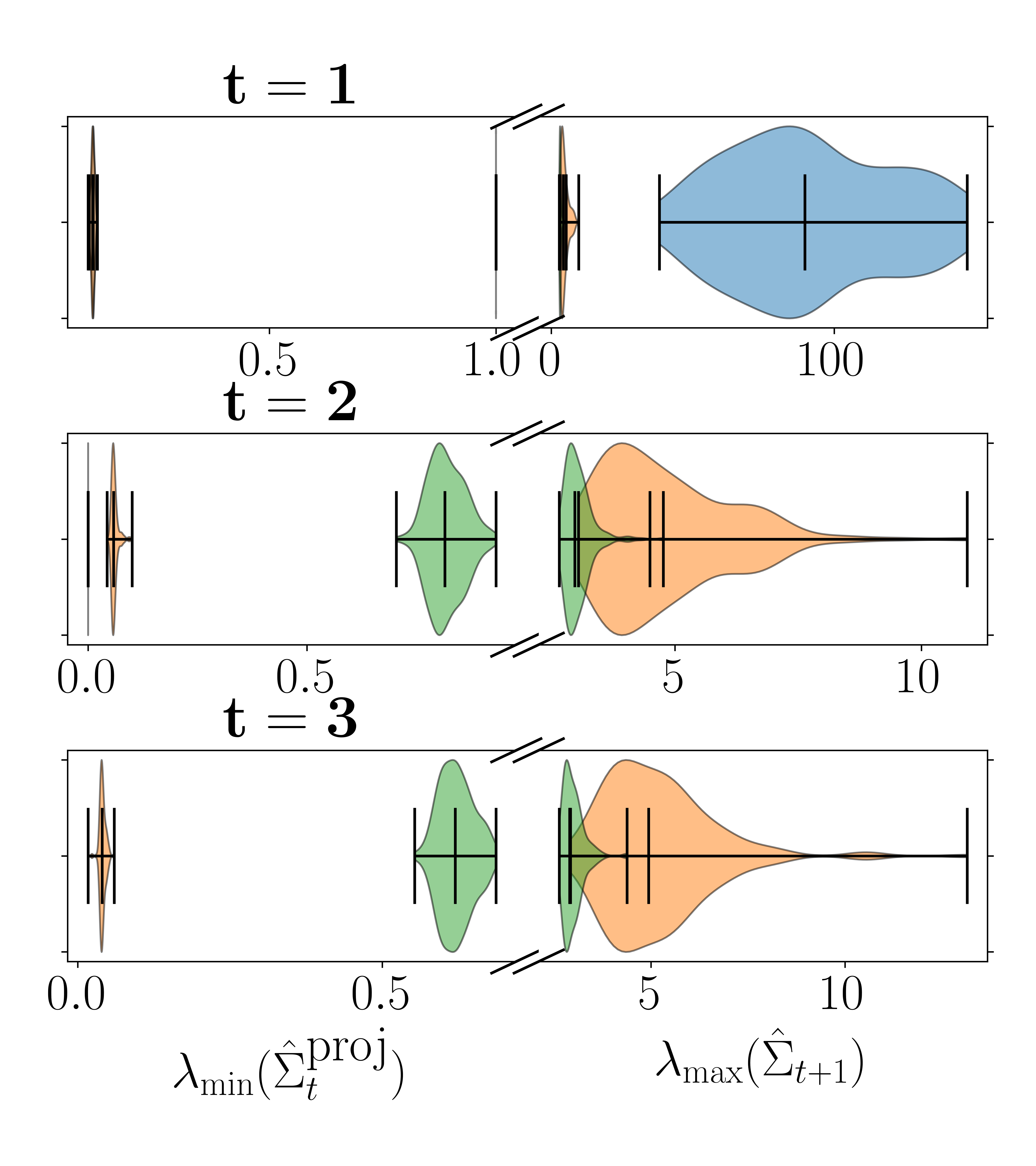}
        \caption{Results for $\varphi_\quadratic$.}
		\label{subfig:iCE-quad}
    \end{subfigure}%
    \hfill
    \begin{subfigure}[c]{.33\textwidth}
        \centering
        \includegraphics[width=\linewidth]{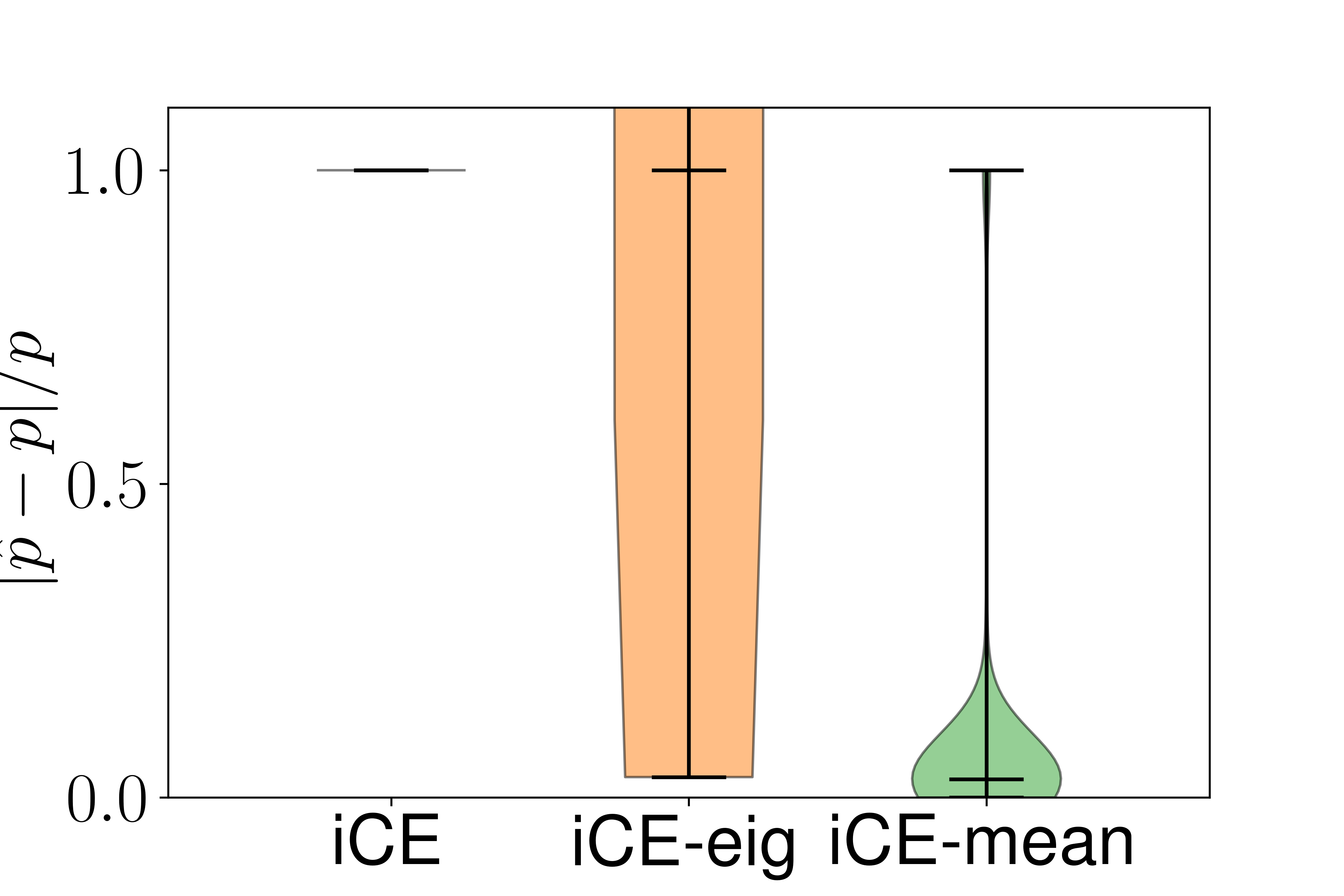}
        \includegraphics[width=\linewidth]{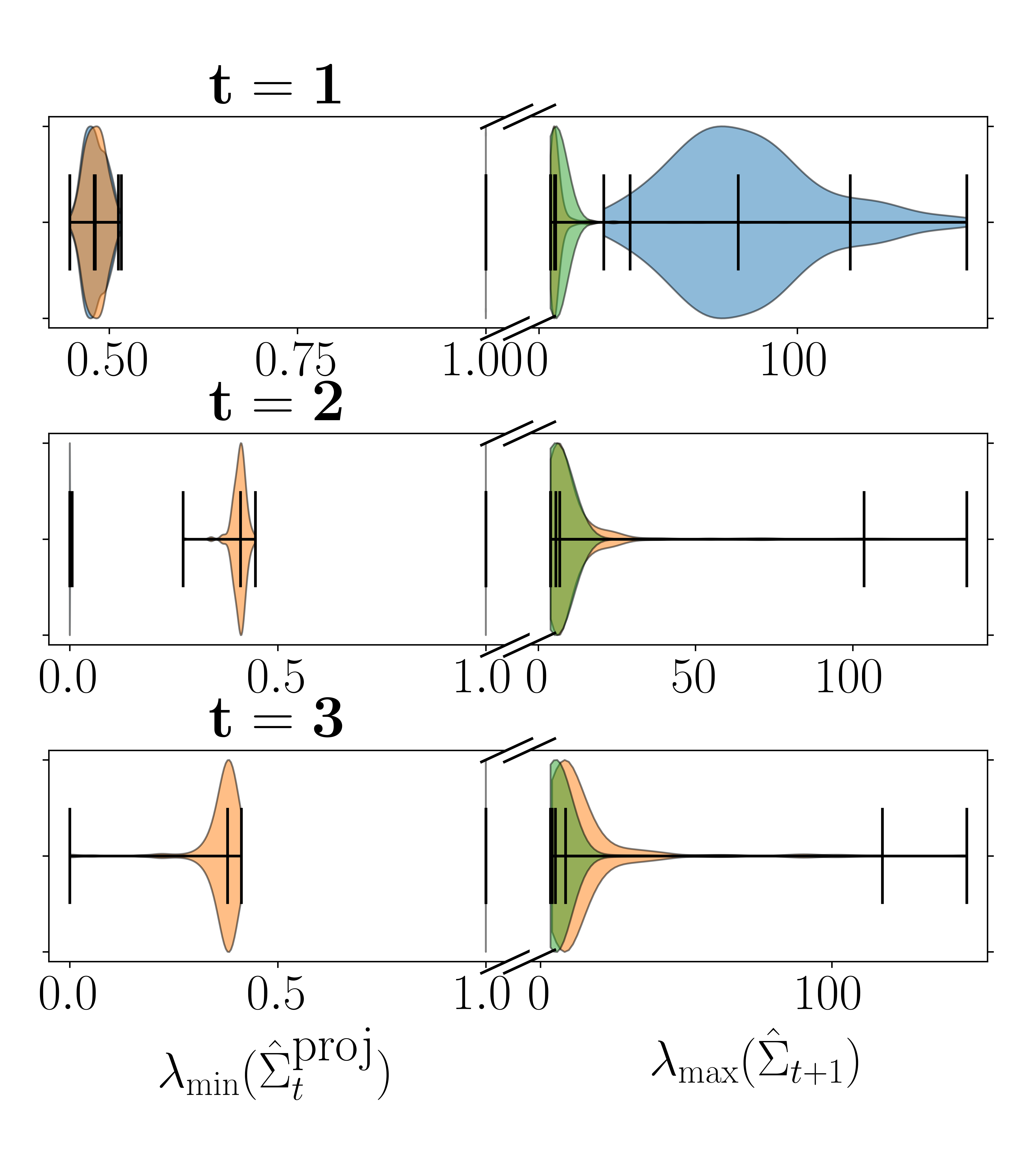}
        \caption{Results for $\varphi_\fin$.}
		\label{subfig:iCE-fin}
    \end{subfigure}%
    \hfill
	\caption{Interpretation of the performance of iCE, iCE-eig and iCE-mean via the spectral behavior. Top figures display the distribution of the relative error $\lvert \hat p - p \rvert / p$. Bottom figures show the distribution of $\lambda_{\min}(\hat \Sigma^\proj_t)$ and $\lambda_{\max}(\hat \Sigma_{t+1})$ for the first three iterations $t = 1,2$ and~$3$.}
    \label{fig:iCE}
\end{figure}
shows that this insight is also confirmed by the behavior of iCE. We see on the top row of this figure that iCE behaves poorly, thereby justifying the need for projection. When considering projections, we see that iCE-mean offers significant improvements over iCE-eig for the two test cases $\varphi_\quadratic$ and $\varphi_\fin$ (Figures~\ref{subfig:iCE-quad} and~\ref{subfig:iCE-fin}). For these two examples, the same interpretation as for CE holds: iCE-mean is the variant with the largest $\lambda_{\min}(\hat \Sigma^\proj_t)$, and during the iterations of the different schemes, we see that a decreasing $\lambda_{\min}(\hat \Sigma^\proj_t)$ translates into an increasing $\lambda_{\max}(\hat \Sigma_{t+1})$. Finally, the linear test case (Figure~\ref{subfig:iCE-lin}) shows that the insight is quite sharp. In this case, iCE-eig and iCE-mean have similar median results, but the distribution of the relative error is slightly more concentrated for iCE-eig, which thus displays favorable performance over iCE-mean. Albeit subtle, this better performance is visible on the spectrum behavior, with iCE-eig eventually leading to a larger $\lambda_{\min}(\hat \Sigma^\proj_t)$ than iCE-mean.

\section{Opening discussion}\label{sec:conclu}

Overall, the results presented in Figures~\ref{fig:CE} and~\ref{fig:iCE} tend to confirm the central insight from Theorem~\ref{thm:main}, namely that CE schemes with smaller $\lambda_{\min}(\Sigma)$ tend to perform better. We now discuss some potentially fruitful research directions stemming from this new way to look at CE schemes.

First, we have so far focused on the influence of projecting, but simply comparing CE and iCE without projection suggests that our results may reach further. The performance of CE compared to that of iCE is displayed in Figure~\ref{fig:CEvsiCE}
\begin{figure}
	\centering
	\begin{subfigure}[c]{.45\textwidth}
		\includegraphics[width=\textwidth]{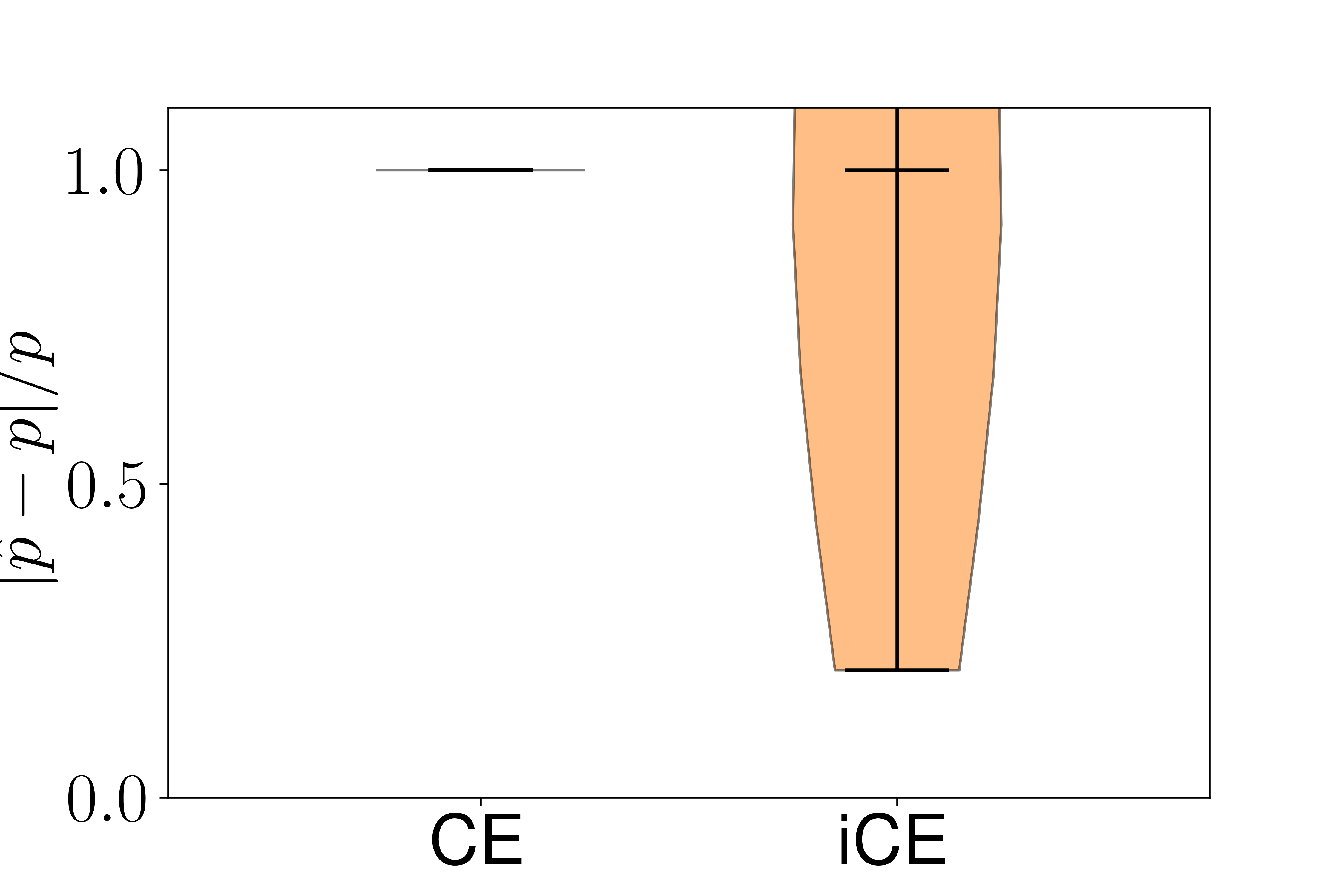}
		\caption{Relative error.} \label{fig:CEvsiCE-a}
	\end{subfigure}
	\hfill
	\begin{subfigure}[c]{.45\textwidth}
		\includegraphics[width=\textwidth]{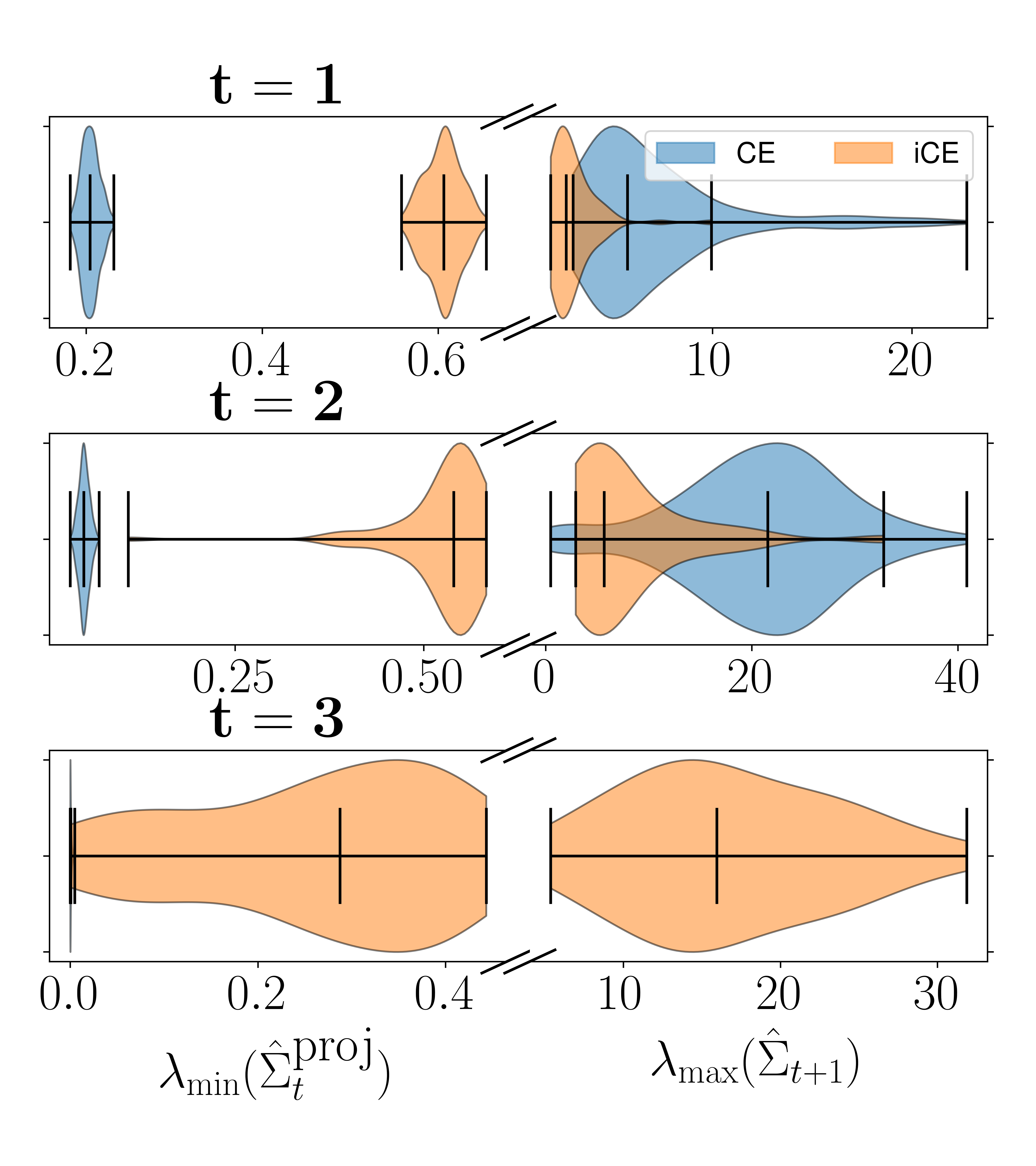}
		\caption{Evolution of the extreme eigenvalues.} \label{fig:CEvsiCE-b}
	\end{subfigure}
	\caption{Comparison of CE and iCE on the linear test function $\varphi_\lin$. (a) Distribution of the relative error $\lvert \hat p - p \rvert / p$. (b) Evolution of $\lambda_{\min}(\hat \Sigma^\proj_t)$ and $\lambda_{\max}(\hat \Sigma_{t+1})$ during the first three iterations. Most repetitions diverge beyond the third iteration for CE so that results of $\lambda_{\max}(\hat \Sigma_4)$ are not displayed.}
	\label{fig:CEvsiCE}
\end{figure}
for the linear test case $\varphi_\lin$ (the only one on which CE without projection converges). We recover the well-known fact that iCE performs much better than CE (Figure~\ref{fig:CEvsiCE-a}), but it is striking to note that even in this case without projection, our insight continues to hold since iCE leads to a larger $\lambda_{\min}(\hat \Sigma^\proj_t)$ than CE (Figure~\ref{fig:CEvsiCE-b}). Again, this is surprising as iCE was not motivated by spectral considerations. Thus, it would be interesting to understand the reason why the smoothing of the indicator function (which is the primary feature of iCE) impacts the spectrum.

Second, although our results allow to understand the potential benefits of projection methods in CE schemes, they actually challenge the relevance of importance sampling in high dimension. Indeed, consider the following corollary to Theorem~\ref{thm:main}, directly obtained with $g = f$.
\begin{corollary} \label{cor:monte-carlo}
	Consider the Monte Carlo scheme, i.e., $\Sigma = I$, in the regime $n = d^\kappa$ for some $\kappa > 0$. If Assumption~\ref{ass:FID} holds and $\inf_d p > 0$, then $\lVert \hat \Sigma_A - \Sigma_A \rVert_\op \Rightarrow 0$ for any $\kappa > 1$.
\end{corollary}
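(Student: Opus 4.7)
The plan is to apply Theorem~\ref{thm:main} directly to the case $\Sigma = I$, $g = f$, and to verify that its hypotheses hold with $\gamma_* = 0$. Since Assumption~\ref{ass:FID} and the lower bound $\inf_d p > 0$ are already granted, what remains is to cast $\Sigma = I$ into the spiked form of Assumption~\ref{ass:Sigma}, to verify Assumption~\ref{ass:MW}, and to read off the resulting threshold $\kappa_*$.

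For Assumption~\ref{ass:Sigma}, I represent $I$ as an instance of~\eqref{eq:Sigma} with $r = 1$, $\lambda_1 = 1$, and a unit vector $v_1$ chosen inside $U_\perp$ whenever $\dim(U) < d$, and inside $U$ otherwise (in particular when $A = \R^d$). This degenerate spiked parameterization trivially satisfies Assumption~\ref{ass:Sigma} and always yields one of the two admissible geometric configurations $V \subset U$ or $V \subset U_\perp$ required by Theorem~\ref{thm:main}.

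For Assumption~\ref{ass:MW}, the key simplification is that $g = f$ forces $\ell \equiv 1$, so the likelihood ratios are not only light-tailed but actually constant. Consequently $\max_{1 \leq i \leq n} \xi_A(X_i) \ell(X_i) \leq 1$ deterministically, which gives $n^{-\gamma} \max_i \xi_A(X_i) \ell(X_i) \to 0$ for any $\gamma > 0$. Conversely, the hypothesis $\inf_d p > 0$ implies $\P(\max_i \xi_A(X_i) = 1) = 1 - (1-p)^n \to 1$, so for $\gamma < 0$ the rescaled maximum tends to $+\infty$ in probability. Hence Assumption~\ref{ass:MW} holds with $\gamma_* = 0$, and the theorem's threshold becomes $\kappa_* = 1/(1 - \gamma_*) = 1$.

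With these ingredients in place, the positive direction of Theorem~\ref{thm:main} immediately yields $\lVert \hat \Sigma_A - \Sigma_A \rVert_\op \Rightarrow 0$ for any $\kappa > 1$, which is the stated conclusion. There is no substantive obstacle to overcome: the entire argument reduces to recognizing that the Monte Carlo scheme corresponds to the extreme case $\lambda_1 = 1$, $\gamma_* = 0$ of the general phase transition, a case in which the threshold $1/\lambda_1$ coincides with the parameter-counting bound $\kappa > 1$ familiar from classical random matrix theory.
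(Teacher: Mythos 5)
Your proof is correct and takes the same approach the paper sketches: the paper states that Corollary~\ref{cor:monte-carlo} is ``directly obtained with $g = f$'' from Theorem~\ref{thm:main} without spelling out the verification, and you correctly fill in the details---embedding $\Sigma = I$ in the spiked model of Assumption~\ref{ass:Sigma} with $r=1$, $\lambda_1=1$ and a free choice of $v_1$ that secures $V \subset U$ or $V \subset U_\perp$, observing that $g = f$ gives $\ell \equiv 1$ so that Assumption~\ref{ass:MW} holds with $\gamma_* = 0$, and reading off $\kappa_* = 1/(1-\gamma_*) = 1$.
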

\noindent Thus on the one hand, Theorem~\ref{thm:main} and Proposition~\ref{prop:range} suggest that importance sampling with auxiliary distribution $g = N(0, \Sigma)$ may require $n \gg d^{1/\lambda_{\min}(\Sigma)}$ to be consistent, even in good cases where $V \subset U$, while Corollary~\ref{cor:monte-carlo} suggests that $n \gg d$ is sufficient for Monte Carlo. Overall, this suggests that Monte Carlo behaves better than importance sampling in high dimension. Of course, this is contradicted by numerous numerical results which show that, in practice, Monte Carlo fails to estimate small probabilities in high dimension while importance sampling with a suitable auxiliary distribution may succeed. Thus, our results call for a theoretical justification of the improvement of importance sampling over Monte Carlo in high dimension which, in our view, is still lacking.

Third, our results shed light and complement earlier results. Consider in particular the following theoretical result on CE without projection.
\begin{thm} [Theorem $2.2$ in~\cite{beh2023insight}] \label{thm:CE}
	Consider the notation introduced in Algorithm~\ref{alg:CE}, and assume that:
	\begin{itemize}
		\item $\inf_d p$ and $\inf_d \rho > 0$;
		\item $m, \nfin \to \infty$;
		\item for every $d$, $\varphi^{-1}(\{x\})$ has zero Lebesgue measure for every $x \in \R$.
	\end{itemize}
	Then for every $t \geq 0$, there exists $\kappa_t \in (0,\infty)$ such that if $\npar \gg d^{\kappa_t}$, then $\hat p / p \Rightarrow 1$ with $\hat p$ defined through~\eqref{eq:IS} with $g = \hat g_t$.
\end{thm}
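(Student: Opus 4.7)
The plan is to prove the theorem by induction on $t$, showing that $(\hat \mu_t, \hat \Sigma_t)$ converges in probability to the deterministic iterates $(\mu_t, \Sigma_t)$ of Algorithm~\ref{alg:CE-det}, provided $\npar \gg d^{\kappa_t}$ for some threshold $\kappa_t \in (0, \infty)$. The conclusion $\hat p / p \Rightarrow 1$ would then follow from one last application of the law of large numbers to the importance sampling estimator~\eqref{eq:IS} built from $\nfin \to \infty$ i.i.d.\ samples of $\hat g_t$, once $\hat g_t$ is close enough to $g_t$.

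The base case $t = 0$ is immediate since $\hat g_0 = g_0$ is deterministic. For the inductive step, I would assume that the parameters of $\hat g_t$ converge in probability to those of $g_t$, which implies $\hat g_t \Rightarrow g_t$ in total variation (non-degenerate Gaussians depend continuously on mean and covariance). Propagation to iteration $t+1$ then requires four sub-steps: (i) the empirical quantile $\hat q_t$ converges to $q_t$, using continuity of $x \mapsto \P_{g_t}(\varphi(X) \leq x)$ --- which holds because $\varphi^{-1}(\{x\})$ has zero Lebesgue measure --- together with $m \to \infty$; (ii) consequently $\hat A_t$ approximates $A_t$ in the sense that $\P_f(\hat A_t \triangle A_t) \Rightarrow 0$, which in turn yields $\Sigma_{\hat A_t} \to \Sigma_{A_t}$ and $\mu_{\hat A_t} \to \mu_{A_t}$; (iii) the IS estimator $\hat p_t$ concentrates around $p_t := \P_f(A_t)$, which is bounded away from $0$ uniformly in $d$ (an invariant propagated through the induction, using $\inf_d p > 0$ and $\inf_d \rho > 0$); (iv) the empirical mean $\hat \mu_{t+1}$ and covariance $\hat \Sigma_{t+1}$ concentrate around $\mu_{t+1}$ and $\Sigma_{t+1}$.

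The main obstacle is sub-step (iv) for $\hat \Sigma_{t+1}$, since this is precisely the random matrix model analyzed in Theorem~\ref{thm:main}, with $\hat L$ and $\bf X$ dependent and $\hat L$ heavy-tailed. Here polynomially many samples in $d$ are required, and the threshold $\kappa_{t+1}$ is determined by the tail behavior of the maximum likelihood ratio $\max_i \ell(X_i) \xi_{\hat A_t}(X_i)$, which is itself governed by the smallest eigenvalue $\lambda_{\min}(\hat \Sigma_t)$. By the inductive hypothesis $\lambda_{\min}(\hat \Sigma_t) \Rightarrow \lambda_{\min}(\Sigma_t) > 0$ (using the lower bound in~\eqref{eq:inf-p} applied to $A = A_{t-1}$), so $\kappa_{t+1}$ is finite and the induction closes. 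A technical subtlety is that the iterates $\hat \Sigma_t$ are generically full-rank and do not satisfy the spiked model of Assumption~\ref{ass:Sigma}, so Theorem~\ref{thm:main} does not apply verbatim; however, the underlying concentration strategy --- truncation at the level of the maximum likelihood ratio followed by matrix concentration inequalities as in~\cite{Brailovskaya24-0} --- adapts without essential change.

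For the final conclusion, once $\hat g_t$ is close to $g_t$, the variance of the importance weight $f(X) \xi_A(X) / \hat g_t(X)$ is bounded uniformly in $d$, again thanks to~\eqref{eq:inf-p}, so that the IS estimator in~\eqref{eq:IS} with $g = \hat g_t$ has variance $O(1/\nfin)$ uniformly in $d$. Letting $\nfin \to \infty$ then yields $\hat p / p \Rightarrow 1$. Taking $\kappa_t$ as the maximum of the polynomial thresholds arising at iterations $0, \ldots, t$ gives the theorem.
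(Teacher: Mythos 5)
This statement is not proved in the paper at all: it is quoted verbatim from reference~\cite{beh2023insight}, as the theorem heading makes explicit. There is therefore no ``paper's own proof'' to compare your attempt against. Worse, your plan cannot be the route taken in~\cite{beh2023insight}, because you build the inductive step around a Theorem~\ref{thm:main}-type concentration bound, and Theorem~\ref{thm:main} postdates Theorem~\ref{thm:CE}. The paper says as much in the discussion after the statement: in~\cite{beh2023insight} ``only existence of some exponent $\kappa_t$ is proved, but its value is not provided'', and the authors list as an \emph{open} question whether ``the techniques developed here could be integrated into a global analysis of CE to give an explicit expression of $\kappa_t$.'' Your sketch implicitly assumes that integration has already been done.

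There is also a concrete gap in the part you flag as a ``technical subtlety'' and then wave away. The entire proof of Theorem~\ref{thm:main} hinges on the spiked model of Assumption~\ref{ass:Sigma}: it is what makes Lemma~\ref{lemma:ell} give $\ell(x) = \ell(P_V x)$ with $V$ of bounded dimension, which in turn gives Lemma~\ref{lemma:W} --- the split of $\R^d$ into a finite-dimensional subspace $W$ carrying all the dependence between $\hat L$ and $\mathbf{X}$, and a growing orthogonal complement $W_\perp$ on which everything is an honest (possibly conditioned) Gaussian independent of $\hat L$. Only on $W_\perp$ can the matrix concentration inequality of~\cite{Brailovskaya24-0} be invoked (Step $2$, Section~\ref{sec:y-V-perp}), and only because $W$ is of $O(1)$ dimension can the remaining cross terms be handled by scalar arguments. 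In Algorithm~\ref{alg:CE}, $\hat \Sigma_t$ is a full empirical covariance matrix with generically $d$ distinct eigenvalues; there is no finite $V$ with $\ell = \ell \circ P_V$, no invariant subspace decomposition, and the decoupling disappears. Saying the strategy ``adapts without essential change'' is exactly backward: the spiked structure is the essential change that makes the strategy work, and it is absent in the object you would need to control. A proof of Theorem~\ref{thm:CE} must therefore either establish, by induction, that $\hat \Sigma_t$ stays close to a spiked matrix in a strong enough sense to re-apply the machinery (a nontrivial propagation you do not supply), or use a different and less sharp argument --- which is what~\cite{beh2023insight} evidently does, given that it yields only existence of $\kappa_t$ and not its value.
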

\noindent The difference with Theorem~\ref{thm:main} is that this result applies to the true CE scheme as per Algorithm~\ref{alg:CE}, whereas Theorem~\ref{thm:main} only addresses a related random matrix model. Its drawback however is that only existence of some exponent $\kappa_t$ is proved, but its value is not provided, and moreover, the result only shows the sufficiency of the condition $n \gg d^{\kappa_t}$. Furthermore, the authors argue in~\cite{beh2023insight} for a typical dependency of the form
\begin{align} \label{eq:kappa-conjecture}
	\kappa_t \propto \frac{1}{\displaystyle \min \left(\lambda_{\min}(\Sigma_1), \dots, \lambda_{\min}(\Sigma_{t-1}) \right)},
\end{align}
which is clearly supported by Theorem~\ref{thm:main}. It would interesting to see if the techniques developed here could be integrated into a global analysis of CE to give an explicit expression of $\kappa_t$ in Theorem~\ref{thm:CE} above, and maybe also show that the growth rate $d^{\kappa_t}$ is necessary.

Finally, although we believe that the spectral behavior of the estimated covariance matrices is an important ingredient for explaining the performance of a given iterative importance sampling scheme, we emphasize that this is not necessarily the whole story. For instance, Uribe et al.~\cite{uribe2021} have proposed projecting on the so-called Failure Informed Subspace (FIS). Preliminary results suggest that a naive implementation of this method as in Algorithm~\ref{alg:iCE-proj} confirms the insight gained previously but that a smarter numerical implementation leads to significant improvements over the algorithms considered here, while maintaining lower $\lambda_{\min}(\hat \Sigma^\proj_t)$. Several distinctive features of FIS may explain this: for instance the fact that, when projecting on FIS, the projection step $\Proj_r(\hat \Sigma_{t+1}, v)$ can be performed without having to compute the full covariance matrix $\hat \Sigma_{t+1}$; or the fact that FIS requires a differentiable function $\varphi$ and leverages additional information on its gradient. Investigating this behavior in more details appears to be a potentially fruitful research directions, both theoretically and numerically.

\section{Proof of Theorem~\ref{thm:main}} \label{sec:proofs}

In the rest of this section, we assume that Assumptions~\ref{ass:Sigma}, ~\ref{ass:FID} and~\ref{ass:MW} hold, that $\inf_d p > 0$, that either $V \subset U$ or $V \subset U_\perp$ and that $n = d^\kappa$ for some $\kappa > 0$. The proof is organized as follows. We first introduce in Section~\ref{sub:preliminary} some key objects and preliminary results that will be used throughout the proof. The proof is then decomposed in four steps:
\begin{description}
	\item[Step $1$ (Section~\ref{sub:step-1}):] we prove that $\lambda_{\max}(\hat \Sigma_A) \Rightarrow \infty$ when $\kappa < \kappa_*$, and that it implies that $\E \lVert \hat \Sigma_A - \Sigma_A \rVert_\op \to \infty$;
	\item[Step $2$ (Section~\ref{sub:step-2}):] we prove that $\lVert \hat \Sigma_A - \Sigma_A \rVert_\op \Rightarrow 0$ when $\kappa > \kappa_*$;
	\item[Step $3$ (Section~\ref{sub:step-3}):] we prove that $\kappa_* = 1/\lambda_1$ when $V \subset U_\perp$;
	\item[Step $4$ (Section~\ref{sub:step-4}):] we prove that $1 \leq \kappa_* \leq 1/\lambda_1$ when $V \subset U$.
\end{description}

Note that step 2 is the most delicate step which rests on recent results for the concentration of sums of independent random matrices~\cite{Brailovskaya24-0}.

\subsection{Preliminary results} \label{sub:preliminary}

Before proceeding to the proof we collect some results and introduce new objects that will be used in the sequel.

\begin{lemma}\label{lemma:ell}
	For any $x \in \R^d$ we have
	\begin{equation} \label{eq:expression-ell}
		\ell(x) = \lvert \Sigma \rvert^{1/2} = \lvert \Sigma \rvert^{1/2} \exp \left( \frac{1}{2} \sum_{k=1}^r \left( \frac{1}{\lambda_k} - 1 \right) \langle v_k, x \rangle^2 \right).
	\end{equation}
	In particular, $\ell(x) = \ell(P_V x)$.
\end{lemma}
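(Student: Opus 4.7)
The plan is purely computational, with no real obstacle. First I would write $\ell = f/g$ as the ratio of two zero-mean Gaussian densities on $\R^d$. Since $f = N(0,I)$ and $g = N(0,\Sigma)$ share the same $(2\pi)^{-d/2}$ prefactor, this ratio collapses to
\[
\ell(x) \;=\; \lvert \Sigma \rvert^{1/2} \exp\!\left(\tfrac{1}{2} x^\top(\Sigma^{-1} - I)\,x\right),
\]
and the entire task reduces to computing $\Sigma^{-1} - I$ in explicit form.

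Next, I would invoke Assumption~\ref{ass:Sigma}: since $(v_1,\dots,v_r)$ is orthonormal, the spiked representation $\Sigma = \sum_k (\lambda_k - 1)\,v_k v_k^\top + I$ is essentially a partial spectral decomposition, with the $v_k$'s being eigenvectors of eigenvalue $\lambda_k$ and every vector orthogonal to $V$ being an eigenvector of eigenvalue $1$. Inversion is then immediate: $\Sigma^{-1} = \sum_k (1/\lambda_k - 1)\,v_k v_k^\top + I$, so
\[
\Sigma^{-1} - I \;=\; \sum_{k=1}^r \left(\frac{1}{\lambda_k} - 1\right) v_k v_k^\top.
\]
Plugging this back into the previous display and using $x^\top v_k v_k^\top x = \langle v_k, x\rangle^2$ yields~\eqref{eq:expression-ell}.

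For the second assertion, I would simply observe that $P_V x = \sum_{k=1}^r \langle v_k, x\rangle v_k$, so by orthonormality $\langle v_k, P_V x\rangle = \langle v_k, x\rangle$ for every $k = 1,\dots,r$. The expression derived above therefore depends on $x$ only through $P_V x$, giving $\ell(x) = \ell(P_V x)$. The only point requiring mild care is the inversion of $\Sigma$; orthonormality of the spike directions makes this direct, and no further argument is needed.
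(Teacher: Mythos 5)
Your proof is correct and follows essentially the same route as the paper's: write $\ell = f/g$ as a ratio of Gaussian densities, invert $\Sigma$ using its spiked spectral structure to get $\Sigma^{-1} = \sum_k (\lambda_k^{-1}-1)v_kv_k^\top + I$, and read off the dependence on $P_Vx$. No gaps.
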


\begin{proof}
	The expression~\eqref{eq:expression-ell} follows readily from the fact that $\ell = f/g$ and that
	\[ \Sigma^{-1} = \sum_{k=1}^r (\lambda^{-1}_k - 1) v_k v_k^\top + I, \]
	as a direct consequence of~\eqref{eq:Sigma}.
\end{proof}

In the sequel, it will often be useful to rewrite $\hat \Sigma_A$ in~\eqref{eq:Sigma} as
\begin{equation} \label{eq:Sigma-prime}
	\hat \Sigma_A = \frac{1}{np} \sum_{j=1}^{n'} \ell(X^\prime_i) X^\prime_i X^{\prime\top}_i - \mu_A \mu_A^\top
\end{equation}
where $n^\prime = \sum_{i=1}^n \xi_A(X_i)$ is a binomial random variable with the two parameters $n$ and $q = \P_g(X \in A)$, and conditionally on $n'$, the $X^\prime_i$'s are i.i.d.\ drawn according to~$g|_A$.

\begin{lemma}\label{lemma:q-n'}
	If $\inf_d p > 0$, then $\inf_d q > 0$. Moreover, $n'/(nq) \to 1$ in \textnormal{L}\textsubscript2.
\end{lemma}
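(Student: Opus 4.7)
The plan is to obtain the first claim by comparing $q$ with $p$ through the likelihood ratio $\ell = f/g$. Writing
\[
q \;=\; \E_g[\xi_A(X)] \;=\; \E_f\!\left[\frac{\xi_A(X)}{\ell(X)}\right]
\]
and splitting the inner expectation on $\{\ell \leq L\}$ against its complement yields the elementary lower bound
\[
q \;\geq\; \frac{1}{L}\bigl(p - \P_f(\ell(X) > L)\bigr).
\]
The task thus reduces to exhibiting a finite $L$, uniform in $d$, such that $\P_f(\ell(X) > L) \leq (\inf_d p)/2$; combined with $\inf_d p > 0$ this will immediately give $\inf_d q > 0$.

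Such an $L$ will be produced using the finite-rank structure captured by Lemma~\ref{lemma:ell} together with Assumption~\ref{ass:Sigma}. Under $f$, the scalars $Z_k = \langle v_k, X\rangle$ are i.i.d.\ $N(0,1)$, so
\[
\ell(X) \;=\; \lvert \Sigma \rvert^{1/2} \exp\Bigl(\tfrac{1}{2} \sum_{k=1}^r (\lambda_k^{-1}-1) Z_k^2\Bigr) \;\leq\; \lvert \Sigma\rvert^{1/2} \exp\Bigl(\tfrac{1}{2}(\lambda_1^{-1}-1) \sum_{k=1}^r Z_k^2\Bigr),
\]
the inequality using that $\lambda_k^{-1}-1$ is positive only when $\lambda_k \leq 1$, in which case it is bounded by $\lambda_1^{-1}-1$. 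Assumption~\ref{ass:Sigma} guarantees $r \leq R_0 := \sup_d r < \infty$, $\lambda_1^{-1}-1 < \infty$, and $\lvert \Sigma \rvert \in \bigl[\lambda_1^{R_0},\,(\sup_d \lambda_{\max}(\Sigma))^{R_0}\bigr]$. Hence $\ell(X)$ is stochastically dominated, uniformly in $d$, by a deterministic constant times $\exp(C\,\chi^2_{R_0})$, whose tail is controlled by choosing $L$ large enough. This delivers the desired $L$ and the first claim follows.

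The second claim is then immediate. By construction $n' = \sum_{i=1}^n \xi_A(X_i)$ is $\mathrm{Binomial}(n,q)$, so
\[
\E\!\left[\left(\frac{n'}{nq} - 1\right)^2\right] \;=\; \frac{\Var(n')}{(nq)^2} \;=\; \frac{1-q}{nq} \;\leq\; \frac{1}{nq},
\]
which vanishes since $n = d^\kappa \to \infty$ and $nq \geq n \inf_d q \to \infty$.

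The only delicate step is the uniform tail bound on $\ell$. A naive Cauchy--Schwarz attempt bounding $p$ by $q$ through $\E_g[\ell^2]$ fails because this second moment is infinite whenever some $\lambda_k \leq 1/2$, exactly the heavy-tailed regime the paper targets. The truncation above circumvents this obstacle: it only requires one-sided, logarithmic-in-$L$ control on $\log \ell(X)$, which is supplied by the chi-squared tail together with the uniform-in-$d$ bounds $r \leq R_0$ and $\lambda_1 > 0$ built into Assumption~\ref{ass:Sigma}.
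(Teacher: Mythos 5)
Your proof is correct, and it takes a genuinely different route from the paper. The paper's argument rewrites $q = \E[1/\ell_A(Y')]$ with $\ell_A = f|_A/g$, applies Jensen to get $q \geq e^{-D}$ with $D$ the Kullback--Leibler divergence between $f|_A$ and $g$, and then bounds $D$ using the explicit closed form for the KL divergence between Gaussians and uniform bounds on $\Psi(\Sigma_A)$, $\Psi(\Sigma^{-1})$ and $\lVert \mu_A \rVert$ imported from their companion paper. Your argument instead compares $p$ and $q$ directly via a truncation of the likelihood ratio at level $L$ and controls $\P_f(\ell > L)$ uniformly in $d$ using the explicit formula for $\ell$ (Lemma~\ref{lemma:ell}) together with a $\chi^2_{R_0}$ domination, where all constants are uniform by Assumption~\ref{ass:Sigma}. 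What this buys you: the truncation route is more elementary and more self-contained, as it avoids any appeal to the companion paper's KL bookkeeping; it also makes transparent \emph{why} the heavy tail of $\ell$ is harmless here (only logarithmic one-sided control is needed), which you correctly observe when explaining why a Cauchy--Schwarz bound through $\E_g[\ell^2]$ would fail once $\lambda_1 \leq 1/2$. The paper's route gets a slightly cleaner closed-form lower bound $q \geq e^{-D}$, and arguably fits better stylistically since these KL quantities already pervade the companion analysis, but it relies on results established elsewhere. The binomial second-moment computation for the L\textsubscript{2} convergence of $n'/(nq)$ is the same in both proofs.
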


\begin{proof}
	The arguments closely follow those of the proof of~\cite[Corollary $3.5$]{beh2023insight}, which we detail for completeness. We have
	\[ q = \P_g(X \in A) = \E_f \left( \frac{g(Y)}{f(Y)} \xi_A(Y) \right) = p \E_{f|_A} \left( \frac{g(Y)}{f(Y)} \right) = \E \left( 1/\ell_A(Y') \right) \]
	with $\ell_A = f|_A / g$ and $Y' \sim f|_A$, leading to $q = \E \left[ \exp \left( -\log \ell_A(Y') \right) \right]$. Introducing $D = \E(\log \ell_A(Y'))$ the Kullback--Leibler divergence between $f|_A$ and $g$ and using Jensen's inequality for the exponential function, we obtain $q \geq e^{-D}$. According to~\cite[Lemma~$3.4$]{beh2023insight}, we have
	\[ D = -\log p - \Psi(\Sigma_A) - \frac{1}{2} \lVert \mu_A \rVert^2 + \Psi(\Sigma^{-1} \Sigma_A) + \frac{1}{2} \mu_A^\top \Sigma^{-1} \mu_A \]
	with $\Psi(M) = (\Tr(M) - \log \lvert M \rvert - d)/2$ for any $d\times d$ matrix M. It is easily seen that $\Psi(M) \geq 0$ for any symmetric matrix $M$ (see for instance~\cite[Eq.~(10)]{beh2023insight}), so that
	\[ q \geq \exp\left( - \Psi(\Sigma^{-1} \Sigma_A) - \frac{1}{2} \mu_A^\top \Sigma^{-1} \mu_A \right). \]
	We have $\mu_A^\top \Sigma^{-1} \mu_A \leq \lVert \mu_A \rVert^2 / \lambda_1$ which is bounded ($\lambda_1 > 0$ is fixed, while $\sup_d \lVert \mu_A \rVert < \infty$ by~\eqref{eq:inf-p}). Moreover, $\Psi(\Sigma_A)$ is bounded by~\cite[Corollary~$4.2$]{beh2023insight} and so is $\Psi(\Sigma^{-1})$ by definition of $\Sigma$ and under Assumption~\ref{ass:Sigma}. Therefore, $\Psi(\Sigma^{-1} \Sigma_A)$ is bounded by~\cite[Lemma~$3.8$]{beh2023insight} and so $\inf_d q > 0$ as desired. Finally, the convergence $n'/(qn) \to 1$ in L\textsubscript{2} is immediate since $n'$ is a binomial random variable with parameter $(n,q)$.
\end{proof}

We will consider
\begin{equation} \label{eq:M}
	M = \frac{d}{n} \max_{1 \leq i \leq n} \xi_A(X_i) \ell(X_i) = \frac{1}{n^{1-1/\kappa}} \max_{1 \leq j \leq n'} \ell(X'_j)
\end{equation}
as well as
\begin{equation} \label{eq:W}
	W = \left\{ \begin{array}{ll} U & \text{if } V \subset U\\V & \text { if } V \subset U_\perp \end{array} \right.
\end{equation}
together with $(w_i)$ an orthonormal family spanning $W$. For future use we collect the following properties related to $W$.

\begin{lemma}\label{lemma:W}
	If Assumptions~\ref{ass:Sigma} and~\ref{ass:FID} hold and $V \subset U$ or $V \subset U_\perp$, then the following properties hold where $X' \sim g|_A$:
	\begin{itemize}
		\item $\sup_d \dim(W) < \infty$;
		\item $\ell(x) = \ell(P_W x)$ for any $x \in \R^d$;
		\item $P_W X^\prime$ and $P_{W_\perp} X^\prime$ are independent;
		\item if $V \subset U$, then $P_{W_\perp} X^\prime \sim N(0, P_{W_\perp})$;
		\item if $V \subset U_\perp$, then $P_{W_\perp} X^\prime \sim N(0, P_{W_\perp})|_A$.
	\end{itemize}
\end{lemma}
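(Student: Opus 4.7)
The first property is immediate from the two cases: if $V \subset U$ then $\dim(W) = \dim(U)$ is uniformly bounded by Assumption~\ref{ass:FID}, while if $V \subset U_\perp$ then $\dim(W) = \dim(V) \leq r$ is uniformly bounded by Assumption~\ref{ass:Sigma}. For the second property, notice that the inclusion $V \subset W$ holds in both cases, so $P_V = P_V P_W$; combined with Lemma~\ref{lemma:ell} this gives $\ell(x) = \ell(P_V x) = \ell(P_V P_W x) = \ell(P_W x)$.

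For the remaining three properties, my plan is to introduce an adapted orthonormal basis of $\R^d$ in which $\Sigma$ is diagonal and the subspaces $W$ and $W_\perp$ correspond to disjoint groups of coordinates. In the case $V \subset U$, I would extend $(v_1,\dots,v_r)$ first to an orthonormal basis of $U = W$ and then to a basis of $\R^d$; since $U_\perp \subset V_\perp$ and $\Sigma$ acts as the identity on $V_\perp$, the matrix $\Sigma$ is diagonal with entries $(\lambda_1,\dots,\lambda_r,1,\dots,1)$ in this basis. In the case $V \subset U_\perp$, I would extend $(v_1,\dots,v_r)$ (already an orthonormal basis of $W = V$) to a basis of $\R^d$, which similarly diagonalises $\Sigma$.

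In either basis, the coordinates of $X \sim g$ are independent centred Gaussians, so $P_W X$ and $P_{W_\perp} X$ are independent under $g$. By Assumption~\ref{ass:FID}, the conditioning event $A$ depends only on $P_U X$, and in both cases $P_U X$ is a measurable function of only one of the two components: of $P_W X$ when $V \subset U$ (because $U = W$), and of $P_{W_\perp} X$ when $V \subset U_\perp$ (because $U \subset V_\perp = W_\perp$ gives $P_U = P_U P_{W_\perp}$). Conditioning on $A$ therefore affects only one of the two components and leaves the other's law untouched, which proves the independence assertion.

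The last two properties then reduce to direct marginal computations. Since $\Sigma$ agrees with the identity on $V_\perp$ and in both cases $W_\perp \subset V_\perp$, one has $P_{W_\perp} \Sigma P_{W_\perp} = P_{W_\perp}$, hence $P_{W_\perp} X \sim N(0, P_{W_\perp})$ under $g$. When $V \subset U$, the conditioning acts on the independent component $P_U X = P_W X$ and the law of $P_{W_\perp} X$ is preserved, yielding the fourth property; when $V \subset U_\perp$, the conditioning is itself a function of $P_{W_\perp} X$, so the resulting conditional law is exactly $N(0, P_{W_\perp})|_A$, yielding the fifth. The lemma is elementary once the adapted basis is in place, so there is no genuine obstacle---the only point requiring care is to interpret the degenerate Gaussian $N(0, P_{W_\perp})$ as supported on $W_\perp$ so that the conditioning on $A$ in the final assertion is meaningful, which is legitimate because $\inf_d q > 0$ by Lemma~\ref{lemma:q-n'}.
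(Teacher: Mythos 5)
Your proof is correct and uses essentially the same strategy as the paper: both establish independence of $P_W X$ and $P_{W_\perp} X$ under $g$ from the fact that $\Sigma$ acts as the identity on $V_\perp \supset W_\perp$, and then use Assumption~\ref{ass:FID} to note that the conditioning on $A$ only involves one of the two components. The only cosmetic difference is that you phrase the independence step via an adapted orthonormal basis diagonalising $\Sigma$, whereas the paper computes $\Cov(P_W X, P_{W_\perp} X) = P_W \Sigma P_{W_\perp} = 0$ directly---the two are equivalent.
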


\begin{proof}
	The first property is obvious from the definition of $W$ and Assumptions~\ref{ass:Sigma} and~\ref{ass:FID}. For the second property, we note that $V \subset W$ so that $P_V P_W = P_V$: thus, Lemma~\ref{lemma:ell} gives $\ell(P_W x) = \ell(P_V P_W x) = \ell(P_V x) = \ell(x)$.
	
	Let us now consider $X' \sim g|_A$ and prove that $P_W X'$ and $P_{W_\perp} X'$ are independent. First, note that $P_W X$ and $P_{W_\perp} X$ are independent when $X \sim g$. Indeed, for $X \sim g$ we have $\Cov(P_W X, P_{W_\perp} X) = P_W \Sigma P_{W_\perp}$. Since $W_\perp \subset V_\perp$, it follows that $P_{W_\perp} v = P_{W_\perp} P_{V_\perp} v = 0$ for any $v \in V$. In view of the definition~~\eqref{eq:Sigma} of $\Sigma$, this implies that $\Sigma P_{W_\perp} = P_{W_\perp}$ and so $P_W \Sigma P_{W_\perp} = P_W P_{W_\perp} = 0$. This shows that $\Cov(P_W X, P_{W_\perp} X) = 0$ which implies the independence between $P_W X$ and $P_{W_\perp}$ as $X$ is Gaussian.
	
	Let us now prove that $P_W X^\prime$ and $P_{W_\perp} X^\prime$ are also independent. Let $\psi, \phi: \R^d \to \R_+$ be any measurable functions: by definition of $X^\prime$ we have
	\[ \E(\psi(P_W X^\prime) \phi(P_{W_\perp} X^\prime)) = \E_g(\psi(P_W X) \phi(P_{W_\perp} X) \mid X \in A) \]
	and so under Assumption~\ref{ass:FID} we obtain
	\[ \E(\psi(P_W X^\prime) \phi(P_{W_\perp} X^\prime)) = \left\{\begin{array}{ll} \E_g(\psi(P_U X) \phi(P_{U_\perp} X) \mid P_U X \in A) & \text{if } V \subset U,\\\E_g(\psi(P_V X) \phi(P_{V_\perp} X) \mid P_U X \in A) & \text{if } V \subset U_\perp. \end{array} \right. \]
	When $V \subset U_\perp$, we have $P_U X = P_U P_{V_\perp} X$ and so $P_U X \in A \Leftrightarrow P_{V_\perp} X \in A$ by~\eqref{eq:FID}. Therefore, in view of the previous display and the definition of $W$, we get
	\begin{equation} \label{eq:interm}
		\E(\psi(P_W X^\prime) \phi(P_{W_\perp} X^\prime)) = \left\{\begin{array}{ll} \E_g(\psi(P_W X) \phi(P_{W_\perp} X) \mid P_W X \in A) & \text{if } V \subset U,\\\E_g(\psi(P_W X) \phi(P_{W_\perp} X) \mid P_{W_\perp} X \in A) & \text{if } V \subset U_\perp, \end{array} \right.
	\end{equation}
	and so the independence between $P_W X'$ and $P_{W_\perp} X'$ follows from that between $P_W X$ and $P_{W_\perp} X$.
	
	Moreover, it follows from the previous expression that $P_{W_\perp} X'$ is equal in distribution to $P_{W_\perp} X$ when $V \subset U$ and to $P_{W_\perp} X \mid P_{W_\perp} X \in A$ when $V \subset U_\perp$. Therefore, in order to conclude the proof, it remains to prove that $P_{W_\perp} X \sim N(0, P_{W_\perp})$. Since $X$ is centered, we have to check that the variances agree. But since we have proved that $\Sigma P_{W_\perp} = P_{W_\perp}$, this readily follows: $\Var(P_{W_\perp} X) = P_{W_\perp} \Sigma P_{W_\perp} = P_{W_\perp}^2 = P_{W_\perp}$.
\end{proof}

\subsection{Step $1$: $\lambda_{\max}(\hat \Sigma_A) \Rightarrow +\infty$ when $\kappa < \kappa_*$} \label{sub:step-1}

First we assume that $\kappa < \kappa_*$ and we prove that $\lambda_{\max}(\hat \Sigma_A) \Rightarrow +\infty$. Note that this implies the result stated in Theorem~\ref{thm:main}, because $\lVert \hat \Sigma_A - \Sigma_A \rVert_\op \geq \lambda_{\max}(\hat \Sigma_A) - \lambda_{\max}(\Sigma_A)$ and $\sup_d \lambda_{\max}(\Sigma_A) < \infty$ by~\eqref{eq:inf-p}. Let
\[ J = \arg \max_{i=1, \ldots, n} \xi_A(X_i) \ell(X_i),\ x = \frac{X_J}{\lVert X_J \rVert} \ \text{ and } \ \chi = \frac{1}{d} \lVert P_{W_\perp} X_J \rVert^2. \]
Note that the probability of the event $\cap_{i=1}^n \{\xi_A(X_i) = 0\}$ vanishes because
\[ \P \left( \xi_A(X_i) = 0, i = 1, \ldots, n \right) = (1-q)^n \]
which goes to $0$ according to Lemma~\ref{lemma:q-n'}. Thus, without loss of generality, we assume that $J$ is well-defined. Moreover, this also implies that $\xi_A(X_J) = 1$. Starting from~\eqref{eq:hat-Sigma} we get
\begin{align*}
	x^\top \hat \Sigma_A x & = \frac{1}{np} \sum_{j=1}^n \xi_A(X_i) \ell(X_i) \langle X_i, x \rangle^2 - x^\top \mu_A \mu_A^\top x\\
	& \geq \frac{1}{n} \xi_A(X_J) \ell(X_J) \langle X_J, x \rangle^2 - \langle x, \mu_A \rangle^2\\
	& \geq \frac{1}{n} \ell(X_J) \lVert X_J \rVert^2 - \lVert \mu_A \rVert^2
\end{align*}
where the first inequality comes from the definition of $J$ and the second equality follows from the fact that $\xi_A(X_J) = 1$, $\langle X_J, x \rangle^2 = \lVert X_J \rVert^2$ and $\langle \mu_A, x \rangle^2 \leq \lVert \mu_A \rVert^2$. Recalling the definition~\eqref{eq:M} of $M$, we finally get
\[ \lambda_{\max}(\hat \Sigma_A) \geq x^\top \hat \Sigma_A x \geq M \chi - \lVert \mu_A \rVert^2. \]
Since $\sup_d \lVert \mu_A \rVert^2 < \infty$ by~\eqref{eq:inf-p}, it is enough to show that $M \chi \Rightarrow +\infty$. For any $\alpha > 0$ and $0 < \beta < 1$ we have
\[ \P \left( \chi M \leq \alpha \right) \leq \P \left( M \leq \alpha/\beta \right) + \P \left( \chi \leq \beta \right). \]
Since $\kappa < \kappa_*$, we have
\[ 1 - \frac{1}{\kappa} < 1 - \frac{1}{\kappa_*} = \gamma_* \]
and so according to Assumption~\ref{ass:MW}, we have $M \Rightarrow +\infty$ which implies that $\P \left( M \leq 2\alpha \right) \to 0$. To control $\lVert P_{W_\perp} X_J \rVert$, let us use~\eqref{eq:Sigma-prime} to rewrite $X_J$ as
\[ X_J = \arg \max_{1 \leq j \leq n^\prime} \ell(X^\prime_j). \]
If $\P^\prime = \P(\ \cdot \mid n^\prime)$, then by exchangeability we get
\begin{align*}
	\P^\prime \left( \chi \leq \beta \right) & = \P^\prime \left( \frac{1}{d} \lVert P_{W_\perp} X^\prime_1 \rVert^2 \leq \beta \mid \ell(X^\prime_1) \geq \ell(X^\prime_j), j = 2, \ldots, n^\prime \right)\\
	& = \P^\prime \left( \frac{1}{d} \lVert P_{W_\perp} X^\prime_1 \rVert^2 \leq \beta \mid \ell(P_W  X^\prime_1) \geq \ell(X^\prime_j), j = 2, \ldots, n^\prime \right)
\end{align*}
using $\ell(x) = \ell(P_W x)$ by Lemma~\ref{lemma:W}. Thus, using the independence between $P_{W_\perp} X^\prime_1$, $P_W X^\prime_1$ (by Lemma~\ref{lemma:W}) and the other $X^\prime_j$'s, we obtain
\[ \P^\prime \left( \chi \leq \beta \right) = \P^\prime \left( \frac{1}{d} \lVert P_{W_\perp} X^\prime_1 \rVert^2 \leq \beta \right) = \P \left( \frac{1}{d} \lVert P_{W_\perp} X^\prime_1 \rVert^2 \leq \beta \right) \]
with the last equality coming from the independence between $n^\prime$ and $X^\prime_1$. Integrating over $n^\prime$ and recalling that $X^\prime_1 \sim g|_A$ we thus obtain
\begin{align*}
	\P \left( \chi \leq \beta \right) & = \P_g \left( \frac{1}{d} \lVert P_{W_\perp} X \rVert^2 \leq \beta \mid X \in A \right)\\
	& \leq \frac{1}{q} \P_g \left( \frac{1}{d} \lVert P_{W_\perp} X \rVert^2 \leq \beta \right)\\
	& = \frac{1}{q} \P_f \left( \frac{1}{d} \lVert P_{W_\perp} \Sigma^{1/2} Y \rVert^2 \leq \beta \right)
\end{align*}
using for the last equality that $\Sigma^{1/2} Y \sim g$ if $Y \sim f$. Since $P_{W_\perp} \Sigma^{1/2} = P_{W_\perp}$ (which comes from $W_\perp \subset V_\perp$ along the same arguments as those leading to $\Sigma P_{W_\perp} = P_{W_\perp}$ in the proof of Lemma~\ref{lemma:W}), we finally obtain
\[ \P \left( \chi \leq \beta \right) \leq \frac{1}{q} \P_f \left( \frac{1}{d} \lVert P_{W_\perp} Y \rVert^2 \leq \beta \right). \]
Since $\lVert P_{W_\perp} Y \rVert^2$ follows a chi-square distribution with $d - \dim(W)$ degrees of freedom and $\sup_d \dim(W)< \infty$, we get $\frac{1}{d} \lVert P_{W_\perp} Y \rVert^2 \Rightarrow 1$ and so the probability in the right-hand side of the previous display vanishes since $\beta < 1$. Since finally $\inf_d q > 0$ by Lemma~\ref{lemma:q-n'} we get that $\P(\chi \leq \beta) \to 0$ which achieves the proof of this step.

\subsection{Step $2$: $\lVert \hat \Sigma_A - \Sigma_A \rVert_\op \Rightarrow 0$ when $\kappa > \kappa_*$} \label{sub:step-2}

Now we assume that $\kappa > \kappa_*$ and we prove that $\lVert \hat \Sigma_A - \Sigma_A \rVert_\op \Rightarrow 0$. Let $x \in \R^d$ with $\lVert x \rVert = 1$ and $\Delta = \hat \Sigma_A - \Sigma_A$. Then writing $x = y + y_\perp$ with $y \in W$ and $y_\perp \in W_\perp$ we obtain
\[ x^\top (\hat \Sigma_A - \Sigma_A) x = y^\top \Delta y + y^\top_\perp \Delta y_\perp + y^\top_\perp \Delta y + y^\top \Delta y_\perp \]
and so, since $\lVert y \rVert, \lVert y_\perp \rVert \leq 1$,
\[ \sup_{x \in \R^d: \lVert x \rVert = 1} \lvert x^\top (\hat \Sigma_A - \Sigma_A) x \rvert \leq \sup_{y \in W^*} \left \lvert y^\top \Delta y \right \rvert + \sup_{y_\perp \in W^*_\perp} \left \lvert y_\perp^\top \Delta y_\perp \right \rvert + 2\sup_{\substack{y \in W^*\\y_\perp \in W^*_\perp}} \left \lvert y^\top \Delta y_\perp \right \rvert \]
with $W^* = \{x \in W: \lVert x \rVert \leq 1\}$ and $W^*_\perp = \{x \in W_\perp: \lVert x \rVert \leq 1\}$. The following lemma plays an important role in the proofs.

\begin{lemma}\label{lemma:moments}
	Assume that Assumption~\ref{ass:Sigma} holds and let $X' \sim g|_A$. Then for any $w \in W^*$, any $\alpha < (1-\lambda_1)^{-1}$ and any $\beta > 0$, we have
	\[ \sup_d \E \left( \ell(X')^\alpha \lvert \langle w, X' \rangle \rvert^\beta \right) < \infty. \]
\end{lemma}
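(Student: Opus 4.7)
The plan is to reduce the estimate to a finite-dimensional Gaussian integral on $V$ by a change of measure, and then to read off the integrability condition directly. First, I would change measure from $g|_A$ to $f$: since $X' \sim g|_A$ has density $g\xi_A/q$ and $\ell = f/g$, one gets
\[
\E\bigl(\ell(X')^\alpha |\langle w, X'\rangle|^\beta\bigr)
= \frac{1}{q}\, \E_f\bigl(\ell(Y)^{\alpha-1} |\langle w, Y\rangle|^\beta \xi_A(Y)\bigr)
\leq \frac{1}{q}\, \E_f\bigl(\ell(Y)^{\alpha-1} |\langle w, Y\rangle|^\beta\bigr).
\]
Since $\inf_d q > 0$ by Lemma~\ref{lemma:q-n'}, it remains to bound the last expectation uniformly in $d$.

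Next, I would exploit that, by Lemma~\ref{lemma:ell}, $\ell(y) = \ell(P_V y)$ depends only on $P_V y$ and has the explicit form $\ell(y)^{\alpha-1} = |\Sigma|^{(\alpha-1)/2} \exp\bigl(\sum_{k=1}^r c_k Z_k^2\bigr)$, where $Z_k = \langle v_k, Y\rangle$ and $c_k = (\alpha-1)(\lambda_k^{-1}-1)/2$. Under $f$, $(Z_1, \ldots, Z_r)$ are i.i.d.\ $N(0,1)$ and independent of $P_{V_\perp} Y$. Decomposing $w = w_1 + w_2$ with $w_1 = P_V w$ and $w_2 = P_{V_\perp} w$, the inequality $|u+v|^\beta \le 2^\beta(|u|^\beta + |v|^\beta)$ together with this independence gives
\[
\E_f\bigl(\ell(Y)^{\alpha-1} |\langle w, Y\rangle|^\beta\bigr)
\le 2^\beta\, I_\beta + 2^\beta\, I_0 \cdot \E_f\bigl(|\langle w_2, Y\rangle|^\beta\bigr),
\]
where $I_\gamma := \E_f\bigl(\ell(Y)^{\alpha-1} |\langle w_1, Y\rangle|^\gamma\bigr)$. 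Note that in the case $V \subset U_\perp$ one has $W = V$, so $w_2 = 0$ and only $I_\beta$ contributes; in the case $V \subset U$ both terms are needed. The second factor in the cross-term is the $\beta$-th absolute moment of an $N(0, \|w_2\|^2)$ variable with $\|w_2\| \le 1$, hence uniformly bounded.

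Finally, setting $a_k = \langle v_k, w\rangle$ (so $\sum_k a_k^2 \le 1$), the term $I_\gamma$ is the explicit $r$-dimensional Gaussian integral
\[
I_\gamma = |\Sigma|^{(\alpha-1)/2} \int_{\R^r} (2\pi)^{-r/2} \exp\Bigl(-\sum_{k=1}^r \bigl(\tfrac{1}{2} - c_k\bigr) z_k^2\Bigr) \Bigl| \sum_{k=1}^r a_k z_k \Bigr|^\gamma \, dz,
\]
which is finite as soon as $c_k < 1/2$ for every $k$. For $\lambda_k \le 1$ this rearranges to $\alpha < 1/(1-\lambda_k)$; since $\lambda_k \geq \lambda_1$, the binding constraint is $\alpha < 1/(1-\lambda_1)$, exactly the hypothesis. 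For $\lambda_k > 1$, $c_k$ is bounded by $|\alpha-1|(1-1/\sup_d \lambda_{\max}(\Sigma))/2$ which is harmless for any fixed $\alpha$.

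The main, essentially bookkeeping, obstacle is to make the integral bound uniform in $d$: this is where I would use Assumption~\ref{ass:Sigma} to control $r$, $|\Sigma|$, and $\lambda_{\max}(\Sigma)$ uniformly, together with $\sum_k a_k^2 \le 1$ and $\|w_2\|\le 1$, so that the explicit integral is majorized by a constant depending only on $\alpha$, $\beta$, $\sup_d r$, $\lambda_1$, and $\sup_d \lambda_{\max}(\Sigma)$. No random-matrix machinery is needed; the heart of the lemma is that $\ell$ lives on the bounded-dimensional space $V$, which reduces everything to a one-shot Gaussian computation.
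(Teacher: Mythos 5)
Your proposal is correct, and it follows a genuinely different route from the paper. The paper bounds the conditional expectation by $q^{-1}\,\E_g(\ell(X)^\alpha \lvert\langle w,X\rangle\rvert^\beta)$ and then decouples the two factors via H\"older's inequality, reducing the problem to showing $\sup_d \E_g(\ell(X)^{\alpha'}) < \infty$ for $\alpha' < (1-\lambda_1)^{-1}$ (here the strict inequality $\alpha < (1-\lambda_1)^{-1}$ gives the needed slack for the H\"older exponent) and $\sup_d \E_g(\lvert\langle w,X\rangle\rvert^{\beta'}) < \infty$, each evaluated by an explicit Gaussian integral on ${\rm span}(v_1,\ldots,v_r)$. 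You instead change measure to $f$ in one step, split $w = P_V w + P_{V_\perp} w$, and exploit the independence under $f$ of $(\langle v_1,Y\rangle,\ldots,\langle v_r,Y\rangle)$ and $P_{V_\perp}Y$ to factor the $P_{V_\perp}w$ contribution, reducing everything to the same $r$-dimensional Gaussian integral with an extra polynomial factor. Both proofs hinge on the same two ingredients ($\inf_d q > 0$ and $\ell = \ell\circ P_V$ with $\sup_d r$, $\sup_d \lvert\Sigma\rvert$, $\sup_d\lambda_{\max}(\Sigma)$ bounded); yours avoids the H\"older detour at the cost of the $V/V_\perp$ bookkeeping, and makes the origin of the threshold $\alpha < (1-\lambda_1)^{-1}$ slightly more visible. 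One small imprecision, shared in spirit with the paper's write-up: the claim that the constraints $c_k < 1/2$ for the directions with $\lambda_k > 1$ are "harmless for any fixed $\alpha$" fails for $\alpha$ sufficiently negative (for $\alpha < 1$ one has $c_k = (1-\alpha)(1-\lambda_k^{-1})/2$, which exceeds $1/2$ when $\lambda_k$ is large and $\alpha$ very negative); the paper's identity $\max_k(\alpha-1)(\lambda_k^{-1}-1) = (\alpha-1)(\lambda_1^{-1}-1)$ similarly presumes $\alpha \ge 1$. This is irrelevant for the lemma's uses (all with $\alpha \ge 1$), but strictly the statement should restrict to, say, $\alpha \ge 1$, or add a further constraint involving $\lambda_{\max}(\Sigma)$.
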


\begin{proof}
	Since $\E \left( \ell(X')^\alpha \lvert \langle w, X' \rangle \rvert^\beta \right) \leq q^{-1} \E_g \left( \ell(X)^\alpha \lvert \langle w, X \rangle \rvert^\beta \right)$ and $\inf_d q > 0$ by Lemma~\ref{lemma:q-n'}, it is enough by H\"older's inequality to prove that
	\[ \sup_d \E_g \left( \ell(X)^\alpha \right) < \infty \ \text{ and } \ \sup_d \E_g \left( \lvert \langle w, X \rangle \rvert^\beta \right) < \infty. \]
	Since $\ell = f/g$ we have $\E_g(\ell(X)^\alpha) = \E_f(\ell(Y)^{\alpha-1})$. Starting from~\eqref{eq:expression-ell}, we see that
	\[ \E_f(\ell(Y)^{\alpha-1}) = \lvert \Sigma \rvert^{(\alpha-1)/2} \E_f \left[ \exp \left( \frac{\alpha - 1}{2} \sum_{k=1}^r (\lambda_k^{-1}-1) \langle Y, v_k \rangle^2 \right) \right]. \]
	Since $Y$ is standard Gaussian and the $v_k$'s are orthonormal, the $Y^\top v_k$ are i.i.d.\ standard Gaussian random variables in dimension one, so that
	\[ \E_f(\ell(Y)^{\alpha-1}) = \lvert \Sigma \rvert^{(\alpha-1)/2} \prod_{k=1}^r \E \left(e^{ \frac{1}{2} (\alpha - 1)(\lambda_k^{-1} - 1) N^2} \right) \ \text{ where } \ N \sim N(0,1). \]
	Assumption~\ref{ass:Sigma} implies that $\sup_d \lvert \Sigma \rvert^{1/2} < \infty$, and from the density of the standard Gaussian random variable it follows immediately that
	\[ \E \left(e^{\frac{1}{2} (\alpha-1)(\lambda_k^{-1} - 1) N^2} \right) < \infty \Leftrightarrow (\alpha - 1)(\lambda_k^{-1} - 1) < 1 \]
	and so
	\[ \E_f(\ell(Y)^{\alpha-1}) < \infty \Leftrightarrow \max_k (\alpha - 1)(\lambda_k^{-1} - 1) = (\alpha - 1) (\lambda_1^{-1} - 1) < 1. \]
	This achieves to prove that $\E_g(\ell(X)^\alpha) < \infty$ for $\alpha < (1-\lambda_1)^{-1}$. Let us now show that $w^T X$ has finite moments of all orders, uniformly in $d$. By definition, $w^\top X$ is a centered Gaussian random variable with variance
	\[ \Var_g(w^\top X) = w^\top \Sigma w = \sum_{k=1}^r (\lambda_k - 1) \langle v_k, w \rangle^2 + \lVert w \rVert^2 \leq r \lambda_{\max}(\Sigma) + 1, \]
	using that $\lVert v_k \rVert = \lVert w \rVert = 1$. By Assumption~\ref{ass:Sigma} we therefore get $\sup_d \Var_g(w^\top X) < \infty$ which readily implies that all moments of $w^\top X$ are finite, uniformly in $d$.
\end{proof}

\subsubsection{Control of $y \in W^*$} 

Here we show that the term $\sup_{y \in W^*} \left \lvert y^\top \Delta y \right \rvert$ converges to $0$ in L\textsubscript1. Let $y \in W^*$: decomposing $y \in W$ on the orthornormal basis $(w_i)$ of $W$, we get
\[ y^\top \Delta y = \sum_{k, \ell = 1}^{\dim(W)} \langle y, w_k \rangle \langle y, w_\ell \rangle w_k^\top \Delta w_\ell \]
so that $\sup_{y \in W^*} \left \lvert y^\top \Delta y \right \rvert \leq \sum_{k, \ell} \lvert w_k^\top \Delta w_\ell \rvert$. In particular, since $\sup_d \dim(W) < \infty$, in order to prove the result it is enough to prove that $\lvert w_k^\top \Delta w_\ell \rvert \lone 0$ for any $1 \leq k, \ell \leq \dim(W)$. Since $\Delta = \hat \Sigma_A - \E(\hat \Sigma_A)$, we can write
\[ w_k^\top \Delta w_\ell = \frac{1}{np} \sum_{j=1}^{n'} \left( \chi_j - \E(\chi_j) \right) \ \text{ with } \ \chi_j = \ell(X'_j) \langle X'_j, w_k \rangle \langle X'_j, w_\ell \rangle. \]
A straightforward extension of Lemma~\ref{lemma:moments} implies that $\chi_1$ (and therefore $\chi_1 - \E(\chi_1)$) has finite moments of order $\eta$ for some fixed $\eta > 1$, uniformly in $d$. Recalling that $n'/(qn) \to 1$ in L\textsubscript2 and $\inf_d q > 0$ by Lemma~\ref{lemma:q-n'}, the next lemma implies that $w_k^\top \Delta w_\ell \lone 0$. For lack of reference to this probably standard result (a triangular version of the law of large numbers), we provide its proof.

\begin{lemma}\label{lem:triangular-lln}
	For each $d \geq 1$, let $\xi_{n,d} = \sum_{i=1}^n \zeta_{i,d}$ where $\zeta_{1,d}, \ldots, \zeta_{n,d}$ are centered and i.i.d.\ with $\sup_d \E\lvert \zeta_{1,d}\rvert^\eta < \infty$ for some $\eta > 1$. Then $n^{-1}\xi_{n,d} \Lone 0$.
\end{lemma}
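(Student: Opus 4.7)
The statement is a triangular-array version of the weak law of large numbers in $L^1$, with uniform control only on $\eta$-th moments for some $\eta > 1$ that may well be smaller than $2$. My plan is to use a truncation argument at the Marcinkiewicz--Zygmund scale $n^{1/\eta}$, controlling the bounded piece via an $L^2$ estimate and the tail piece directly in $L^1$.

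First, I would reduce to the case $1 < \eta \leq 2$ without loss of generality, since for $\eta > 2$ Jensen's inequality gives $\sup_d \E|\zeta_{1,d}|^2 \leq (\sup_d \E|\zeta_{1,d}|^\eta)^{2/\eta} < \infty$, so $\eta$ can be replaced by $2$. Next I would decompose each summand as $\zeta_{i,d} = \zeta'_{i,d} + \zeta''_{i,d}$ where $\zeta'_{i,d} = \zeta_{i,d}\Indicator{|\zeta_{i,d}| \leq n^{1/\eta}}$, and use the centering $\E\zeta_{i,d} = 0$ to write $n^{-1}\xi_{n,d} = S'_{n,d} + S''_{n,d}$ with $S'_{n,d} = n^{-1}\sum_{i=1}^n(\zeta'_{i,d} - \E\zeta'_{i,d})$ and $S''_{n,d} = n^{-1}\sum_{i=1}^n(\zeta''_{i,d} - \E\zeta''_{i,d})$, both centered sums of i.i.d.\ variables.

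For the bounded piece I would exploit independence to obtain $\E(S'_{n,d})^2 \leq n^{-1}\E(\zeta'_{1,d})^2$, and then use the pointwise estimate $(\zeta'_{1,d})^2 \leq n^{(2-\eta)/\eta}|\zeta_{1,d}|^\eta$ (valid since $|\zeta'_{1,d}|^{2-\eta} \leq n^{(2-\eta)/\eta}$ on the truncation event, with $2 - \eta \geq 0$) to conclude $\E(S'_{n,d})^2 \leq n^{-2(\eta-1)/\eta}\sup_d\E|\zeta_{1,d}|^\eta$, which vanishes as $n \to \infty$ because $\eta > 1$; Jensen's inequality then upgrades this $L^2$ control to an $L^1$ bound of order $n^{-(\eta-1)/\eta}$. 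For the tail piece, the triangle inequality combined with the pointwise estimate $|\zeta''_{1,d}| \leq n^{-(\eta-1)/\eta}|\zeta_{1,d}|^\eta$ (valid because $|\zeta_{1,d}|/n^{1/\eta} \geq 1$ on the event $\{|\zeta_{1,d}| > n^{1/\eta}\}$) yields $\E|S''_{n,d}| \leq 2\E|\zeta''_{1,d}| \leq 2n^{-(\eta-1)/\eta}\sup_d\E|\zeta_{1,d}|^\eta \to 0$. Summing the two bounds gives the $L^1$ convergence.

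The argument is essentially routine; the only substantive choice is the truncation threshold $n^{1/\eta}$, which is precisely the scale that balances the variance contribution of the bounded part with the $L^1$ contribution of the tail, so that both pieces decay at the common Marcinkiewicz--Zygmund rate $n^{-(\eta-1)/\eta}$. No concentration machinery beyond the elementary variance identity for sums of independent variables and a Markov-type tail estimate is required.
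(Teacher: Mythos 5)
Your proof is correct, and it takes a genuinely different route from the paper's. The paper first establishes uniform integrability of $n^{-1}\xi_{n,d}$ by a one-line triangle-inequality bound in the $L^\eta$-norm, reducing the problem to convergence in probability, and then delegates that step to a textbook triangular-array weak law (Durrett's Theorem 2.2.11) whose three truncation-at-level-$n$ hypotheses are each bounded by $Cn^{1-\eta}$ via Markov-type estimates. You instead truncate at the Marcinkiewicz--Zygmund scale $n^{1/\eta}$, re-center both the bounded piece and the tail piece, and control the former in $L^2$ by a variance bound (then Jensen) and the latter directly in $L^1$, delivering the $L^1$ conclusion in a single pass with no appeal to uniform integrability or to any external theorem. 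Your intermediate estimates are all correct: the reduction to $1 < \eta \leq 2$ by Jensen, the pointwise comparisons $(\zeta'_{1,d})^2 \leq n^{(2-\eta)/\eta}\lvert\zeta_{1,d}\rvert^\eta$ and $\lvert\zeta''_{1,d}\rvert \leq n^{-(\eta-1)/\eta}\lvert\zeta_{1,d}\rvert^\eta$, and the independence-driven $L^2$ bound on the truncated, re-centered sum. Your argument is more elementary and self-contained and also exhibits the explicit rate $n^{-(\eta-1)/\eta}$; the paper's version is shorter on the page at the cost of citing an external weak law of large numbers.
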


\begin{proof}
	Let $\zeta_d = \zeta_{1,d}$ and $C = \sup_n \E \lvert \zeta_d \rvert^\eta$, which by assumption is finite. If $\lVert X \rVert_\eta = [\E(\lvert X \rvert^\eta)]^{1/\eta}$ denotes the $\eta$-norm, we have
	\[ \lVert \xi_{n,d} \rVert_\eta = \frac{1}{n} \Big \lVert \sum_{i=1}^n \zeta_{i,d} \Big \rVert_\eta \leq \lVert \zeta_d \rVert_\eta = [\E (\lvert \zeta_d \rvert^\eta)]^{1/\eta} \leq C^{1/\eta} \]
	which shows that $\xi_{n,d}$ is uniformly integrable. Thus, in order to show that $\xi_{n,d} \lone 0$, it is enough to show that $\xi_{n,d} \Rightarrow 0$. If $\eta \geq 2$ this is immediate by considering the variance, so let us assume $\eta < 2$. According to~\cite[Theorem 2.2.11]{Durrett_2019}, since the $\zeta_{i,d}$ are i.i.d., it is sufficient that
	\begin{itemize}
	\item[(i)] $n \P(\lvert \zeta_d \rvert > n) \to 0$,
	\item[(ii)] $\E[ \zeta_d ; \lvert \zeta_d \rvert \leq n] \to 0$, and
	\item[(iii)] $n^{-1} \E[ \zeta_d^2 ; \lvert \zeta_d \rvert \leq n] \to 0$.
	\end{itemize}
	The first two terms are bounded by $C/n^{\eta-1}$. Indeed, for (i) this directly comes from Markov inequality. For (ii), since $\E(\zeta_n) = 0$ it follows that $\E[ \zeta_d ; \lvert \zeta_d \rvert \leq n] = -\E[ \zeta_d ; \lvert \zeta_d \rvert \geq n]$ and so
	\[ \left \lvert \E[ \zeta_d ; \lvert \zeta_d \rvert \leq n]\right \rvert \leq \E[ \lvert \zeta_d \rvert ; \lvert \zeta_d \rvert \geq n] \leq \frac{C}{n^{\eta-1}}. \]
	For (iii), writing $\zeta_d^2 = \lvert \zeta_d \rvert^\eta \lvert \zeta_d \rvert^{2-\eta}$ and using $\lvert \zeta_d \rvert^{2-\eta} \leq n^{2-\eta}$ when $\lvert \zeta_d \rvert \leq n$ (and $\eta < 2$) leads to the bound $C / n^{\eta-1}$. In either case, we see that all three terms vanish as desired.
\end{proof}

\subsubsection{Control of $y \in W^*, y_\perp \in W^*_\perp$}

Let us now control the term $\sup_{y \in W^*, y_\perp \in W^*_\perp} \left \lvert y^\top \Delta y_\perp \right \rvert$. We begin with the following lemma.

\begin{lemma} \label{lemma:Sigma-A-mu-A-W}
	If Assumption~\ref{ass:FID} holds, then $y_\perp^\top \Sigma_A y = 0$ for any $y \in W$ and $y_\perp \in W_\perp$. Moreover, if $V \subset U$ then $P_{W_\perp} \mu_A = 0$ while if $V \subset U_\perp$ then $P_W \mu_A = 0$.
\end{lemma}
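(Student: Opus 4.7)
My plan is to exploit two facts: since $f = N(0,I)$ is isotropic, for $Y \sim f$ the projections $P_U Y$ and $P_{U_\perp} Y$ are independent centered Gaussians with covariances $P_U$ and $P_{U_\perp}$; and since Assumption~\ref{ass:FID} states $\xi_A(Y) = \xi_A(P_U Y)$, the event $\{Y \in A\}$ is $\sigma(P_U Y)$-measurable, hence independent of $P_{U_\perp} Y$. I will apply this systematically, distinguishing the two cases $V \subset U$ (so $W = U$ and $W_\perp = U_\perp$) and $V \subset U_\perp$ (so $W = V \subset U_\perp$ and $U \subset V_\perp = W_\perp$).

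First I would settle the claim on $\mu_A$. If $V \subset U$, then $P_{W_\perp} \mu_A = \E_f(P_{U_\perp} Y \mid Y \in A) = \E_f(P_{U_\perp} Y) = 0$ by the independence just noted. If instead $V \subset U_\perp$, then $W \subset U_\perp$ gives $P_W = P_W P_{U_\perp}$, and the same calculation yields $P_W \mu_A = P_W \E_f(P_{U_\perp} Y \mid Y \in A) = 0$.

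Next I would treat $y_\perp^\top \Sigma_A y$ for $y \in W$ and $y_\perp \in W_\perp$. The idea is to write $y_\perp^\top \Sigma_A y = \E_f(\langle y_\perp, Y\rangle \langle Y, y\rangle \mid Y \in A) - \langle y_\perp, \mu_A\rangle \langle \mu_A, y\rangle$ and to split $Y = P_U Y + P_{U_\perp} Y$ inside the product. In the case $V \subset U$, $y \in U$ and $y_\perp \in U_\perp$ collapse the product to $\langle y_\perp, P_{U_\perp} Y\rangle \langle P_U Y, y\rangle$; the conditional expectation factorizes by independence of $P_{U_\perp} Y$ from $(P_U Y, \{Y \in A\})$, and its $P_{U_\perp}$-factor has mean zero. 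In the case $V \subset U_\perp$, $y \in V \subset U_\perp$ gives $\langle Y, y\rangle = \langle P_{U_\perp} Y, y\rangle$; decomposing $\langle y_\perp, Y\rangle$ along $U$ and $U_\perp$ produces one mixed term that vanishes by independence and zero mean of $P_{U_\perp} Y$, plus a diagonal term equal to $\E_f(\langle y_\perp, P_{U_\perp} Y\rangle \langle P_{U_\perp} Y, y\rangle) = y_\perp^\top P_{U_\perp} y$ which also vanishes since $\langle y_\perp, y\rangle = 0$ and $P_U y = 0$ (because $y \in U_\perp$). In both cases the rank-one term $\langle y_\perp, \mu_A\rangle \langle \mu_A, y\rangle$ vanishes by the mean step: $\langle y_\perp, \mu_A\rangle = 0$ when $V \subset U$, and $\langle \mu_A, y\rangle = 0$ when $V \subset U_\perp$.

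I do not expect any genuine obstacle: the whole argument amounts to Gaussian block-diagonalization along the splitting $\R^d = U \oplus U_\perp$. The only bookkeeping point is to track the inclusions carefully, using $V \subset U \Rightarrow W_\perp = U_\perp$ on the one hand and $V \subset U_\perp \Rightarrow U \subset W_\perp$ on the other, since these dictate which projection of $\mu_A$ is zero and which factorization of the covariance expectation is available.
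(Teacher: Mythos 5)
Your proof is correct and follows essentially the same route as the paper: decomposing along $\R^d = U \oplus U_\perp$, using the independence of $P_U Y$ and $P_{U_\perp} Y$ under $f = N(0,I)$ together with the $\sigma(P_U Y)$-measurability of the event $\{Y \in A\}$ from Assumption~\ref{ass:FID}. The only organizational difference is that for $V \subset U_\perp$ the paper first establishes the stronger intermediate identity $\Sigma_A x = x$ for $x \in U_\perp$ and then reads off $y_\perp^\top \Sigma_A y = \langle y_\perp, y\rangle = 0$, whereas you compute the bilinear form directly; both reduce to the same Gaussian block-diagonalization.
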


\begin{proof}
	First, note that $\mu_A \in U$, i.e., $P_{U_\perp} \mu_A = 0$: indeed,
	\[ P_{U_\perp} \mu_A = \E_f(P_{U_\perp} Y \mid Y \in A) = \E_f(P_{U_\perp} Y \mid P_U Y \in A) = \E_f(P_{U_\perp} Y) = 0, \]
	with the second equality coming from Assumption~\ref{ass:FID} and the third equality coming from the fact that $P_U Y$ and $P_{U_\perp} Y$ are independent (because $Y$ is standard Gaussian).
	
	Let us now prove that $y_\perp^\top \Sigma_A y = 0$ for $y \in W$ and $y_\perp \in W_\perp$. First consider the case where $W = U$. Consider $x_\perp \in U_\perp$, $x \in U$: then
	\begin{align*}
		x_\perp^\top \Sigma_A x & = \E_f ( \langle x, Y - \mu_A \rangle \langle x_\perp, Y - \mu_A \rangle \mid Y \in A)\\
		& = \E_f ( \langle x, P_U (Y - \mu_A) \rangle \langle x_\perp, P_{U_\perp} (Y - \mu_A) \rangle \mid P_U Y \in A)\\
		& = \E_f ( \langle x, P_U (Y - \mu_A) \rangle \mid P_U Y \in A) \times \E_f ( \langle x_\perp, P_{U_\perp} (Y - \mu_A) \rangle)
	\end{align*}
	which is $0$ since $\E_f ( \langle x, P_U (Y - \mu_A) \rangle \mid P_U Y \in A) = 0$ by linearity and definition of $\mu_A$. Note that the relation $x^\top_\perp \Sigma_A x = 0$ for any $x \in U$, $x_\perp \in U_\perp$ shows that $\Sigma_A$ leaves $U$ and $U_\perp$ invariant, i.e., $\Sigma_A x \in U$ (resp.\ $U_\perp$) if $x \in U$ (resp.\ $U_\perp$). Actually, a stronger result holds, namely $\Sigma_A x = x$ if $x \in U_\perp$. To prove this, it is enough to prove that $x^\top \Sigma_A x = \lVert x \rVert$ for $x \in U_\perp$. Let $x \in U_\perp$: then
	\[ x^\top \Sigma_A x = \E_f \left( \langle x, Y - \mu_A \rangle^2 \mid Y \in A \right) = \E_f \left( \langle x, P_{U_\perp} Y \rangle^2 \mid P_U Y \in A \right) \]
	using for the second equality that $P_{U_\perp} \mu_A = 0$ (and Assumption~\ref{ass:FID}). Since $Y$ is standard Gaussian, $P_U Y$ and $P_{U_\perp} Y$ are independent, leading to
	\[ x^\top \Sigma_A x = \E_f \left( \langle x, P_{U_\perp} Y \rangle^2 \right) = \lVert x \rVert^2 \]
	as desired.
	
	Let us now prove that $y_\perp^\top \Sigma_A y = 0$ for $y \in W$ and $y_\perp \in W_\perp$ in the case $W = V$ when $V \subset U_\perp$. Let $y \in V$ and $y_\perp \in V_\perp$: then $y \in U_\perp$ and so $\Sigma_A y = y$, so that $y^\top_\perp \Sigma_A y = y^\top_\perp y = 0$ as desired.
	
	To conclude the proof let us prove that either $P_W \mu_A = 0$ or $P_{W_\perp} \mu_A = 0$ depending on the case considered. Since we have proved $P_{U_\perp} \mu_A = 0$ this shows that $P_{W_\perp} \mu_A = 0$ when $W = U$. Let us finally consider $W = V$: since this is the definition of $W$ when $V \subset U_\perp$, we have $P_W \mu_A = P_V \mu_A = P_V P_{U_\perp} \mu_A = 0$. This concludes the proof.
\end{proof}

Fix in rest of this step $y \in W^*$, $y_\perp \in W^*_\perp$. Since $\Delta = \hat \Sigma_A - \Sigma_A$, Lemma~\ref{lemma:Sigma-A-mu-A-W} implies that $y^\top \Delta y_\perp = y^\top \hat \Sigma_A y_\perp$. Decomposing $y \in W$ on the orthornormal basis $(w_i)$ of $W$, we get
\[ y^\top \hat \Sigma_A y_\perp = \sum_{k=1}^{\dim(W)} \langle y, w_k \rangle w_k^\top \hat \Sigma_A y_\perp = \sum_{k=1}^{\dim(W)} \langle y, w_k \rangle \langle y_\perp, P_{W_\perp} \hat \Sigma_A w_k \rangle \]
and so since $\lVert y \rVert, \lVert y_\perp \rVert, \lVert w_k \rVert \leq 1$, we obtain
\[ \left \lvert y^\top \hat \Sigma_A y_\perp \right \rvert \leq \sum_{k=1}^{\dim(W)} \lVert P_{W_\perp} \hat \Sigma_A w_k \rVert. \]
Since $\sup_d \dim(W) < \infty$ it is therefore enough to show that $\lVert P_\perp \hat \Sigma_A w \rVert \Rightarrow 0$ for every $w \in W^*$. Starting from the expression~\eqref{eq:Sigma-prime} of $\hat \Sigma_A$, we have
\[ P_{W_\perp} \hat \Sigma_A w = \frac{1}{np} \sum_{j=1}^{n'} \ell(X'_j) \langle w, X'_j \rangle P_{W_\perp} X'_j - \langle \mu_A, w \rangle P_{W_\perp} \mu_A. \]
According to Lemma~\ref{lemma:Sigma-A-mu-A-W}, the second term in the right-hand side of the previous display vanishes (either $P_{W_\perp} \mu_A = 0$ or $\langle w, \mu_A \rangle = \langle w, P_W \mu_A \rangle = 0$), so that if we define $\ell'_j = \ell(X'_j) w^\top X'_j$ and $Z_j = P_{W_\perp} X'_j$, we can write
\[ P_{W_\perp} \hat \Sigma_A w = \frac{1}{np} \sum_{j=1}^{n'} \ell'_j Z_j. \]
Let in the rest of the proof $\tilde \P = \P(\ \cdot \ |\ n', P_W X'_j, j = 1, \ldots, n')$. We have
\[ \tilde \E \left( \lVert P_{W_\perp} \hat \Sigma_A w \rVert^2 \right) = \frac{1}{n^2p^2} \sum_{1 \leq i, j \leq n'} \ell'_i \ell'_j \E (Z_i^\top Z_j) \]
which we write $\tilde \E ( \lVert P_{W_\perp} \hat \Sigma_A w \rVert^2 ) = p^{-2}(D + CP)$ distinguishing between the diagonal and the cross-product terms:
\[ D = \frac{\E(\lVert Z_1 \rVert^2)}{n^2} \sum_{j = 1}^{n'} \ell^{\prime 2}_j \ \text{ and } \ CP = \frac{\lVert \E(Z_1) \rVert^2}{n^2} \sum_{1 \leq i \neq j \leq n'} \ell'_i \ell'_j. \]
First, note that $D \Rightarrow 0$, which we state as a separate lemma.

\begin{lemma}
	$D \Rightarrow 0$.
\end{lemma}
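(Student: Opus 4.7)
The plan is to bound $D$ by combining an $O(d)$ estimate of $\E(\lVert Z_1 \rVert^2)$ with a control of $\sum_{j=1}^{n'} \ell^{\prime 2}_j$ that exploits the maximum likelihood ratio $M$ from~\eqref{eq:M}. Since $d = n^{1/\kappa}$, these two estimates will combine into $D \leq O(1) \cdot M \cdot (S/n)$ with $S/n$ bounded in probability, and since $\kappa > \kappa_* = 1/(1-\gamma_*)$ is equivalent to $1 - 1/\kappa > \gamma_*$, Assumption~\ref{ass:MW} will yield $M \Rightarrow 0$ and hence $D \Rightarrow 0$.

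The first ingredient, $\E(\lVert Z_1 \rVert^2) = O(d)$ uniformly in $d$, follows directly from Lemma~\ref{lemma:W}. When $V \subset U$, we have $P_{W_\perp} X'_1 \sim N(0, P_{W_\perp})$ and so $\E(\lVert Z_1 \rVert^2) = \Tr(P_{W_\perp}) = d - \dim(W) \leq d$. When $V \subset U_\perp$, the distribution is $N(0, P_{W_\perp})|_A$, and I would bound the conditional second moment by $\Tr(P_{W_\perp})/\P(Y \in A)$ with $Y \sim N(0, P_{W_\perp})$. Because $U \subset W_\perp$ in this case, one checks that $P_U Y$ has the same distribution as $P_U Y'$ for $Y' \sim f$, so $\P(Y \in A) = p$ by Assumption~\ref{ass:FID}, which is bounded away from $0$ by hypothesis.

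The second ingredient is where care is needed, and is the main obstacle: Lemma~\ref{lemma:moments} only guarantees bounded $\alpha$-moments of $\ell(X'_1)$ for $\alpha < (1-\lambda_1)^{-1}$, which may be arbitrarily close to $1$, so a direct second-moment bound on $\ell^{\prime 2}_j$ is unavailable. The way around this is to peel off one factor of $\ell$ as the maximum:
\[ \sum_{j=1}^{n'} \ell(X'_j)^2 (w^\top X'_j)^2 \;\leq\; \Big(\max_{1 \leq j \leq n'} \ell(X'_j)\Big)\, S \quad \text{with} \quad S = \sum_{j=1}^{n'} \ell(X'_j) (w^\top X'_j)^2. \]
By~\eqref{eq:M} the maximum equals $M n^{1-1/\kappa}$; applying Lemma~\ref{lemma:moments} with $\alpha = 1$ (which satisfies $\alpha < (1-\lambda_1)^{-1}$ since $\lambda_1 > 0$ by Assumption~\ref{ass:Sigma}) and $\beta = 2$ yields $\sup_d \E(\ell(X'_1)(w^\top X'_1)^2) < \infty$; since $\E(n') \leq n$, this gives $\E(S) = O(n)$ and so $S/n$ is bounded in probability by Markov's inequality.

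Putting the two ingredients together,
\[ D \;\leq\; \frac{\E(\lVert Z_1 \rVert^2)}{n^2} \cdot M n^{1-1/\kappa} \cdot S \;=\; \frac{O(d)}{n^{1+1/\kappa}} \cdot M \cdot S \;=\; O(1) \cdot M \cdot \frac{S}{n}, \]
using $d = n^{1/\kappa}$. Since $M \Rightarrow 0$ by Assumption~\ref{ass:MW} and $S/n$ is bounded in probability, the product tends to $0$ in probability, giving $D \Rightarrow 0$. The single-max trick in the third paragraph is forced by the lack of uniform second moments of $\ell$ and is the natural way to merge the maximum control from Assumption~\ref{ass:MW} with the first-moment bound from Lemma~\ref{lemma:moments}.
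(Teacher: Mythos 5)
Your proof is correct and follows essentially the same route as the paper's: bound $\E\lVert Z_1\rVert^2 = O(d)$ uniformly in $d$, peel off the maximum likelihood ratio (giving the factor $M$ of~\eqref{eq:M} and the remaining average, which is the paper's $\bar\ell' = S/n$), and then use Lemma~\ref{lemma:moments} with $\alpha = 1$, $\beta = 2$ to show $S/n$ is tight, so that $M \Rightarrow 0$ forces $D \Rightarrow 0$. The only cosmetic difference is the first ingredient: the paper uses the cruder change-of-measure bound $\E\lVert Z_1\rVert^2 \leq q^{-1}\lambda_{\max}(\Sigma)\,d$, while you identify the law of $P_{W_\perp}X'_1$ via Lemma~\ref{lemma:W} and argue $\P(Y\in A)=p$ directly; both yield $O(d)$.
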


\begin{proof}
	Since $\lVert Z_1 \rVert = \lVert P_{W_\perp} X'_1 \rVert \leq \lVert X'_1 \rVert$ and $X'_1 \sim g|_A$, we get
	\[ \E\lVert Z_1 \rVert^2 \leq \frac{1}{q} \E_g \lVert X \rVert^2 = \frac{1}{q} \E_f \lVert \Sigma^{1/2} Y \rVert^2 \leq \frac{\lambda_{\max}(\Sigma)}{q} \E_f \lVert Y \rVert^2 = \frac{\lambda_{\max}(\Sigma)}{q} d.  \]
	Writing
	\[ \frac{1}{n} \sum_{j = 1}^{n'} \ell_j^{\prime 2} \leq \left( \max_{1 \leq j \leq n'} \ell(X'_j) \right) \bar \ell' \ \text{ with } \ \bar \ell' = \frac{1}{n} \sum_{j = 1}^{n'} \ell(X'_j) \langle w, X'_j \rangle^2 \]
	and recalling the definition~\eqref{eq:M} of $M$, we therefore get
	\[ D \leq q^{-1} \lambda_{\max}(\Sigma_A) M \bar \ell. \]
	We have $\inf_d q > 0$ by Lemma~\ref{lemma:q-n'} and $\sup_d \lambda_{\max}(\Sigma_A) < \infty$ by~\eqref{eq:inf-p}. Moreover, since $\kappa > \kappa_*$, we have $M \Rightarrow 0$ by Assumption~\ref{ass:MW}. Moreover, it readily follows from Lemma~\ref{lemma:moments} that $\bar \ell'$ is tight, so that the upper bound of the previous display vanishes, which implies $D \Rightarrow 0$ as desired.
\end{proof}

We now proceed to controlling the cross-product terms $CP$. The case $V \subset U$ is easy since in this case $CP = 0$ because $\E(Z_1) = \E(P_{W_\perp} X') = 0$ according to Lemma~\ref{lemma:W}. So consider now the case $V \subset U_\perp$. In this case $CP$ remains centered: indeed,
\[ \E(\ell'_1) = \E(\ell(X'_1) w^\top X'_1) = \E_g( \ell(X) w^\top X \mid X \in A) = \frac{1}{q} \E_g( \ell(X) w^\top X \xi_A(X)) \]
and recalling that $\ell = f/g$, we get
\[ \E(\ell(X'_1) w^\top X'_1) = \frac{1}{q} \E_f( w^\top Y \xi_A(Y)) = \frac{p}{q} w^\top \mu_A. \]
Since $P_W \mu_A = 0$ when $V \subset U_\perp$ by Lemma~\ref{lemma:Sigma-A-mu-A-W}, this shows that $\ell'_1$ and thus $CP$ is centered. Moreover, we have according to Lemma~\ref{lemma:W}
\[ \E(Z_1) = \E(P_{W_\perp} X') = \E_f(P_{W_\perp} Y \mid P_{W_\perp} Y \in A) = P_{V_\perp} \E_f(Y \mid P_{V_\perp} Y \in A) \]
using that $W = V$ when $V \subset U_\perp$. Then Assumption~\ref{ass:FID} gives
\[ \E(Z_1) = P_{V_\perp} \E_f(Y \mid P_U P_{V_\perp} Y \in A) \]
and since $U \subset V_\perp$ we finally obtain
\[ \E(Z_1) = P_{V_\perp} \E_f(Y \mid P_U Y \in A) = P_{V_\perp} \mu_A \]
and so $\lVert \E(Z_1) \rVert \leq \lVert \mu_A \rVert$. Since $\sup_d \lVert \mu_A \rVert < \infty$ by~\eqref{eq:inf-p}, we obtain $\sup_d \lVert \E(Z_1) \rVert < \infty$. Resuming the proof of $CP \Rightarrow 0$, we have $D + CP \geq 0$ so $D \geq -CP$ and since $D \geq 0$, this gives $D \geq (-CP)^+$. Moreover, Lemma~\ref{lemma:moments} implies that $\sup_d \E(\ell^{\prime \eta}_1) < \infty$ for some $\eta > 1$: by Minkowski's inequality, this implies that $\sup_d \E(\lvert CP \rvert^\eta) < \infty$ and thus that $CP$ is uniformly integrable. Gathering the above, we see that:
\begin{itemize}
	\item $D \geq (-CP)^+$;
	\item $CP$ is uniformly integrable;
	\item $\E(CP) = 0$;
	\item $D \Rightarrow 0$.
\end{itemize}
Therefore, $D$ and $-CP$ satisfy the assumptions of the next lemma, which achieves to prove that $CP \Rightarrow 0$.

\begin{lemma}
	Consider random variables $X,Y$ with $Y \geq X^+$, $X$ uniformly integrable, $\E(X) = 0$ and $Y \Rightarrow 0$: then $X \Lone 0$.
\end{lemma}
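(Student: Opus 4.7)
The plan is to chain together three standard facts: convergence in distribution to a constant upgrades to convergence in probability, the sandwich $0 \leq X^+ \leq Y$ transfers this convergence to $X^+$, and uniform integrability upgrades convergence in probability to $L_1$ convergence. The centering assumption $\E(X) = 0$ then bridges the gap between controlling $X^+$ and controlling $|X|$.

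Concretely, I would first note that $Y$ is nonnegative (as $Y \geq X^+ \geq 0$) and converges in distribution to the constant $0$, hence converges in probability to $0$. The inequality $X^+ \leq Y$ immediately gives $X^+ \to 0$ in probability. Next, uniform integrability of $X$ transfers to $X^+$ because $0 \leq X^+ \leq |X|$. By the classical equivalence between $L_1$ convergence and the conjunction of convergence in probability with uniform integrability (see e.g.\ \cite{Durrett_2019}), we conclude $X^+ \Lone 0$, and in particular $\E(X^+) \to 0$.

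Finally, since $\E(X) = 0$ we have $\E(X^+) = \E(X^-)$ at every level of the sequence, so $\E(X^-) \to 0$ as well. Adding these up yields $\E|X| = \E(X^+) + \E(X^-) \to 0$, which is exactly $X \Lone 0$.

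No step here presents a genuine obstacle; the argument is entirely routine, and the only care needed is to remember that weak convergence to a constant is equivalent to convergence in probability, which is what allows the sandwich bound $X^+ \leq Y$ to be used effectively.
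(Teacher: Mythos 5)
Your argument is correct and follows the same route as the paper: sandwich $X^+$ between $0$ and $Y$ to get $X^+ \Rightarrow 0$, use uniform integrability inherited from $X$ to upgrade this to $\E(X^+) \to 0$, and then invoke $\E(X) = 0$ to conclude $\E|X| = 2\E(X^+) \to 0$. The only difference is cosmetic — you phrase the last step via $\E(X^+) = \E(X^-)$ separately, where the paper writes the identity $\E|X| = 2\E(X^+)$ directly.
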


\begin{proof}
	$Y \geq X^+$ and $Y \Rightarrow 0$ implies $X^+ \Rightarrow 0$. Since $X$ is uniformly integrable, so is $X^+$ because $\lvert X \rvert \geq X^+$ and so $\E(X^+) \to 0$. Since $\E(X) = 0$, we have $\E(\lvert X \rvert) = 2\E(X^+)$ and so $X \to 0$ in L\textsubscript1.
\end{proof}

Since $D, CP \Rightarrow 0$ and $\inf_d p > 0$ we obtain going back to the relation $\tilde \E ( \lVert P_{W_\perp} \hat \Sigma_A w \rVert^2 ) = p^{-2}(D + CP)$ that $\tilde \E(\lVert P_{W_\perp} \hat \Sigma_A w \rVert^2) \Rightarrow 0$, which implies by dominated convergence that $\lVert P_{W_\perp} \hat \Sigma_A w \rVert \Rightarrow 0$ since for any $\varepsilon > 0$,
\[ \P \left( \lVert P_{W_\perp} \hat \Sigma_A w \rVert \geq \varepsilon \right) \leq \E \left[ \min \left( 1, \frac{1}{\varepsilon^2} \tilde \E(\lVert P_{W_\perp} \hat \Sigma_A w \rVert^2) \right) \right]. \]
This achieves to prove that $\sup_{y \in W^*, y_\perp \in W^*_\perp} \left \lvert y^\top \Delta y_\perp \right \rvert \Rightarrow 0$.

\subsubsection{Control of $y \in W^*_\perp$} \label{sec:y-V-perp}
Let us finally control the term $\sup_{y_\perp \in W^*_\perp} \left \lvert y_\perp^\top \Delta y_\perp \right \rvert$. Define
\[ S = \frac{1}{np} \sum_{j=1}^{n'} \ell(X'_j) (P_{W_\perp} X'_j) (P_{W_\perp} X^\prime_j)^\top \]
so that for $y_\perp \in W^*_\perp$, we have $y^\top_\perp \Delta y_\perp = y^\top_\perp (S' - \E(S')) y_\perp$. Further, we rewrite
\[ S = \sum_{j=1}^{n'} T_j T_j^\top \ \text{ with } \ \alpha_j = \frac{1}{np} \ell(X'_j) \ \text{ and } \ T_j = \sqrt{\alpha_j} P_{W_\perp} X'_j. \]
Recall that $\tilde \P = \P(\cdot \mid n', P_W X_j', j = 1, \ldots, n')$: a direct consequence of Lemma~\ref{lemma:W} is that under $\tilde \P$, the $T_j$'s are independent and $T_j \sim N(0, \Sigma_j) |_{B_j}$ where
\[ \Sigma_j = \alpha_j P_{W_\perp} \ \text{ and } \ B_j = \left\{ \begin{array}{ll}
	\R^d & \text{if } V \subset U,\\
	\sqrt{\alpha_j} A = \{\sqrt{\alpha_j} x: x \in A\} & \text{if } V \subset U_\perp.
\end{array} \right. \]
In the case $V \subset U$, we can therefore invoke~\cite[Theorem $3$.$21$]{Brailovskaya24-0} to get a bound on $\tilde \E \lVert S' - \E(S') \rVert_\op$. However, it is not difficult to adapt the proof of this result to get a bound in the case $V \subset U_\perp$ as well. For completeness, the generalization is established in the Appendix~\ref{appendix}.

\begin{thm} [{\protect{Extension of~\cite[Theorem~$3$.$21$]{Brailovskaya24-0}}}] \label{thm:B-vH-unconditioned}
	With the notation above, there exists a finite, universal constant $C > 0$ such that
    \begin{multline} \label{eq:bound-RMT}
        \tilde \E \lVert S - \tilde \E S \rVert_\op \leq \beta C \\
		\times \left( \Big\lVert \sum_{j=1}^{n'} \Tr(\Sigma_j) \Sigma_j \Big \rVert_\op^{1/2} + \left(\Big \lVert \sum_{j=1}^{n'} \Sigma_j^2 \Big \rVert_\op^{1/2} + \max_{j\leq n'}\Tr (\Sigma_j) \right)\log^3(d+n')\right)
    \end{multline}
	where $\beta = 1$ if $V \subset U$ and $\beta = p^{-1}$ if $V \subset U_\perp$.
\end{thm}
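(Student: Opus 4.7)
The case $V \subset U$ is immediate: by Lemma~\ref{lemma:W}, under $\tilde \P$ the $T_j$'s are independent centered Gaussians with covariances $\Sigma_j$ and no further conditioning, which is exactly the setting of Theorem~$3.21$ in~\cite{Brailovskaya24-0}. Applying it directly yields the bound with $\beta = 1$.

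For the case $V \subset U_\perp$, the plan is to revisit the proof of Theorem~$3.21$ in~\cite{Brailovskaya24-0} and to show that it extends to the conditioned-Gaussian setting at the cost of a single multiplicative factor $\beta = p^{-1}$. The guiding observation is a Radon--Nikodym estimate: if $Y_j \sim N(0, \Sigma_j)$, then $T_j$ is distributed as $Y_j$ restricted to $B_j = \sqrt{\alpha_j} A$, with density $\xi_{B_j}(\cdot)/\P(Y_j \in B_j)$ with respect to that of $Y_j$. Since $U \subset V_\perp = W_\perp$ and $A$ is $P_U$-measurable, a short computation (using the same scaling argument as in the proof of $\Sigma P_{W_\perp} = P_{W_\perp}$ in Lemma~\ref{lemma:W}) shows that $\P(Y_j \in B_j) = \P_f(Y \in A) = p$ for every $j$, so this density is uniformly bounded by $p^{-1}$. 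In particular $\tilde \E \phi(T_j) \leq p^{-1} \E \phi(Y_j)$ for any non-negative $\phi$, and, in the positive semi-definite order, $\tilde \E (T_j T_j^\top) \leq p^{-1} \Sigma_j$.

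The plan is then to carry this bound through the Brailovskaya--van Handel argument. That argument rests on three ingredients: (i) the decomposition $S - \tilde \E S = \sum_j (T_j T_j^\top - \tilde \E T_j T_j^\top)$ into a sum of independent centered random matrices; (ii) a matrix Rosenthal / non-commutative Khintchine-type inequality applied to this sum; and (iii) per-summand moment estimates that exploit the law of $T_j$. Steps (i)--(ii) are structural and only use independence of the summands, which still holds under $\tilde \P$ by Lemma~\ref{lemma:W}. In step (iii), the Gaussian moment estimates are replaced by their conditioned counterparts via the density bound above, each inflated by at most a single factor $p^{-1}$. Since the variance quantities in~\eqref{eq:bound-RMT} are quadratic in $\Sigma_j$ inside the square root, propagating the PSD bound $\tilde \E (T_j T_j^\top) \leq p^{-1} \Sigma_j$ produces exactly a factor $p^{-1}$ on $\lVert \sum_j \Tr(\Sigma_j) \Sigma_j \rVert_\op^{1/2}$ and on $\lVert \sum_j \Sigma_j^2 \rVert_\op^{1/2}$, and the same factor on $\max_j \Tr(\Sigma_j)$. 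Combining these yields the announced bound with $\beta = p^{-1}$.

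The main obstacle is verifying that the proof of Theorem~$3.21$ in~\cite{Brailovskaya24-0} invokes Gaussianity only through per-summand moment estimates, and not through Gaussian-specific tools such as Gaussian integration by parts or Gaussian Lipschitz/log-Sobolev concentration, which would not transfer cleanly to the conditioned setting (in particular, $A$ is not assumed convex). This reduction is essentially bookkeeping but requires care, and is the content of the appendix.
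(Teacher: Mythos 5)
Your high-level strategy coincides with the paper's appendix: handle $V\subset U$ by noting the $T_j$'s are honest (unconditioned) Gaussians under $\tilde\P$ and apply~\cite[Theorem~3.21]{Brailovskaya24-0} directly, and handle $V\subset U_\perp$ by tracing where Gaussianity enters the argument of~\cite{Brailovskaya24-0} and inflating each such moment bound by $p^{-1}$ via the uniform density (Radon--Nikodym) estimate $\tilde\E\,\phi(T_j)\le p^{-1}\,\E\,\phi(Y_j)$ for $\phi\ge 0$. Your closing caveat---that the whole plan hinges on verifying that~\cite[Theorem~3.21]{Brailovskaya24-0} uses Gaussianity only through per-summand moment estimates and not through Gaussian-specific concentration machinery---is exactly the bookkeeping the paper carries out: it isolates a master $q$-th moment inequality (eq.~(31) of the appendix) that holds for arbitrary distributions, so that only the three quantities $\sigma(S)$, $v(S)$, and $R_q(S)$ from~\cite[Lemmas~9.9 and~9.10]{Brailovskaya24-0} need to be re-derived for conditioned Gaussians.

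There is, however, one step your sketch elides and one it misattributes. The quantities $\sigma(S)$ and $v(S)$ are matrix \emph{variances} of $T_jT_j^\top$, i.e.\ fourth-moment functionals centered at the \emph{conditioned} mean $\tilde\E(T_jT_j^\top)$. Because of this centering, $\bigl(T_jT_j^\top-\tilde\E(T_jT_j^\top)\bigr)^2$ is not a nonnegative functional of $T_j$ alone, so your density bound does not apply to it directly. The missing ingredient---used twice in the paper's appendix, once scalarly for $v(S)$ and once in PSD order for $\sigma(S)$---is the elementary variational identity that centering at the true (conditioned) mean minimizes the variance: $\tilde\E\bigl[(M-\tilde\E M)^2\bigr]\le \tilde\E\bigl[(M-\E[Y_jY_j^\top])^2\bigr]$, after which the integrand is a genuine nonnegative functional of $T_j$ and the density bound produces the $p^{-1}$ factor. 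Relatedly, the PSD inequality $\tilde\E(T_jT_j^\top)\le p^{-1}\Sigma_j$ that you propose to ``propagate'' is a second-moment bound and does not by itself control $\sigma(S)$ or $v(S)$; for non-Gaussian $T_j$ you cannot reconstruct the Wick formula $\E[(Y_jY_j^\top-\E Y_jY_j^\top)^2]=\Tr(\Sigma_j)\Sigma_j+\Sigma_j^2$ from the covariance alone. The correct propagation is through the fourth moment via the density bound plus the centering step. Once that is in place your plan goes through and matches the paper's proof.
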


It remains to control the upper bound and show that it vanishes when $n = d^\kappa$ with $\kappa > \kappa_*$. By definition, $\Tr(\Sigma_j) = \alpha_j \Tr(P_{W_\perp}) = \alpha_j (d - \dim(W))$, which gives
\[ \sum_{j=1}^{n'} \Tr(\Sigma_j) \Sigma_j = \sum_{j=1}^{n'} (d - \dim(W)) \alpha^2_j P_{W_\perp} \]
and so
\[ \Big \lVert \sum_{j=1}^{n'} \Tr(\Sigma_j) \Sigma_j \Big \rVert_\op = \frac{d - \dim(W)}{n^2p^2} \sum_{j=1}^{n'} \ell(X'_j)^2 \lVert P_{W_\perp} \rVert_\op^{1/2} \leq \frac{d}{n^2p^2} \sum_{j=1}^{n'} \ell(X'_j)^2 \]
where we have used $\lVert P_{W_\perp} \rVert_\op = 1$. Writing
\[ \frac{d}{n^2} \sum_{j=1}^{n'} \ell(X'_j)^2 \leq \frac{d}{n} \max_{1 \leq j \leq n'} \ell(X'_j) \times \frac{1}{n} \sum_{j=1}^{n'} \ell(X'_j) \]
and recalling the definition~\eqref{eq:M} of $M$, we finally get
\[ \Big \lVert\sum_{j=1}^{n'} \Tr(\Sigma_j) \Sigma_j \Big \rVert_\op \leq p^{-2} M \bar \ell \ \text{ with } \ \bar \ell = \frac{1}{n} \sum_{j=1}^{n'} \ell(X'_j). \]
Similarly, we have
\[ \Big \lVert \sum_{j=1}^{n'} \Sigma_j^2 \Big \rVert_\op = \Big( \sum_{j=1}^{n'} \alpha_j^2 \Big) \left \lVert P_{W_\perp} \right \rVert_\op = \frac{1}{n^2 p^2} \sum_{j=1}^{n'} \ell(X'_j)^2 \leq p^{-2} M \bar \ell. \]
Finally,
\[ \max_{1 \leq j \leq n'}\Tr (\Sigma_j) \leq d \max_{1 \leq j \leq n'} \alpha_j = \frac{d}{np} \max_{1 \leq j \leq n'} \ell(X'_j) = p^{-1} M. \]
Gathering the previous inequalities, we finally get
\[ \tilde \E \lVert S - \tilde \E S \rVert_\op \leq \beta C \times \left( p^{-1} (M \bar \ell)^{1/2} + \left( p^{-1} (M \bar \ell)^{1/2} + p^{-1/2} M^{1/2} \right)\log^3(d+n')\right). \]
Recall that $\sup_d \beta < \infty$ since we assume $\inf_d p > 0$. Moreover, Assumption~\ref{ass:MW} implies that $M^{1/2} \log^3(d + n') \Rightarrow 0$. Since $\E(\bar \ell) = 1$, $\bar \ell$ is tight and so the additional multiplicative terms $\bar \ell$ do not change the asymptotic behavior. Finally, we see that the upper bound of the previous display vanishes, which implies that $\tilde \E \lVert S - \tilde \E(S) \rVert_\op \Rightarrow 0$ and so that $\lVert S - \tilde \E(S) \rVert_\op \Rightarrow 0$ as well.

\subsection{Step $3$: $\kappa_* = 1/\lambda_1$ when $V \subset U_\perp$} \label{sub:step-3}

Assume now that $V \subset U_\perp$, and let us prove that $\kappa_* = 1/\lambda_1$. Since $\kappa_* = 1/(1-\gamma_*)$, this amounts to showing that $\gamma_* = 1-\lambda_1$, i.e., in view of~\eqref{eq:MW}, that
\[ \frac{1}{n^\gamma} \max_{1 \leq i \leq n} \xi_A(X_i) \ell(X_i) \Rightarrow \left\{ \begin{array}{ll} 0 & \text{if } \gamma > 1-\lambda_1,\\
	+\infty & \text{if } \gamma < 1-\lambda_1. \end{array} \right. \]
We have
\[ \frac{1}{n^\gamma} \max_{1 \leq i \leq n} \xi_A(X_i) \ell(X_i) = \left( \frac{n'}{qn} \right)^\gamma \times q^\gamma \times \frac{1}{n^{\prime \gamma}} \max_{1 \leq j \leq n'} \ell(P_W X'_j). \]
Since $n'/(qn) \to 1$ in L\textsubscript2 and $\inf_d q > 0$ by Lemma~\ref{lemma:q-n'}, the two first terms of the right-hand side of the previous display do not affect the asymptotic behavior, and we can replace $n'$ by $n$ in the last term. Further,~\eqref{eq:interm} implies that $P_W X'_j$ is equal in distribution to $P_W X$ with $X \sim g$, independently of $n'$. Thus, the result reduces to showing that
\[ \frac{1}{n^\gamma} \max_{1 \leq i \leq n} \ell(X_i) \Rightarrow \left\{ \begin{array}{ll} 0 & \text{if } \gamma > 1-\lambda_1,\\
	+\infty & \text{if } \gamma < 1-\lambda_1. \end{array} \right. \]

We first assume that $\gamma > 1-\lambda_1$, and we prove that $n^{-\gamma} \max \ell(X_i) \Rightarrow 0$. For any $\varepsilon > 0$, we have
\[ \E \left( \left( \frac{1}{n^\gamma} \max_{1 \leq i \leq n} \ell(X_i) \right)^\varepsilon \right) = \frac{1}{n^{\varepsilon \gamma}} \E \left( \max_{1 \leq i \leq n} \ell(X_i)^\varepsilon \right) \leq \frac{1}{n^{\varepsilon \gamma-1}} \E_g \left( \ell(X)^\varepsilon \right). \]
Since $\gamma > 1-\lambda_1$, we can take $1/\gamma < \varepsilon < 1/(1-\lambda_1)$: for this $\varepsilon$, we have $n^{\varepsilon \gamma - 1} \to \infty$ while $\sup_d \E_g(\ell(X)^\varepsilon) < \infty$ by Lemma~\ref{lemma:moments}, which shows that $n^{-\gamma} \max \ell(X_i) \Rightarrow 0$ as desired.

Assume now that $\gamma < 1-\lambda_1$, and let us prove that $n^{-\gamma} \max \ell(X_i) \Rightarrow \infty$. First, write
\[ \sum_{k=1}^r \left( \frac{1}{\lambda_k} - 1 \right) \langle X_i, v_k \rangle^2 = \sum_{k=1}^r \left( 1 - \lambda_k \right) N_{i,k}^2 \]
with $N_{i,k} = \langle v_k, X_i \rangle / \sqrt{\lambda_k}$. Since for $1 \leq k, l \leq r$ with $k \neq l$ we have
\[ \Var_g(v_k^\top X) = v_k^\top \Sigma v_k = \lambda_k \ \text{ and } \ \Cov_g(v_k^\top X, v_l^\top X) = v_k^\top \Sigma v_l = 0 \]
we see that the $N_{i,k}$'s are i.i.d.\ $N(0,1)$ random variables. Moreover, if $Z_i = \sum_{k=1}^r \Indicator{\lambda_k > 1} (\lambda_k - 1) N_{i,k}^2$, we get
\[ \sum_{k=1}^r \left( \frac{1}{\lambda_k} - 1 \right) \langle X_i, v_k \rangle^2 \geq (1-\lambda_1) N_{i,1}^2 - Z_i \]
and so if $I = \arg \max_{1 \leq i \leq n} N_{i,1}^2$, we see starting from~\eqref{eq:expression-ell} that
\[ \max_{1 \leq i \leq n} \ell(X_i) \geq \ell(X_I) = \lvert \Sigma \rvert^{1/2} e^{(1-\lambda_1) N_{I,1}^2 / 2} Z_I. \]
Assumption~\ref{ass:Sigma} implies that $\inf_d \lvert \Sigma \rvert > 0$. Moreover, since the $N_{i,k}$'s are i.i.d., we get that $N_{I,1}$ and $Z_I$ are independent. Thus, in order to show the result, it is enough to prove that
\[ \frac{1}{n^\gamma} e^{(1-\lambda_1) N^2_{I,1} / 2} = \frac{1}{n^\gamma} \max_{1 \leq i \leq n} e^{(1-\lambda_1) N^2_{i,1} / 2} \Rightarrow \infty. \]
Since the $N_{i,1}$ are i.i.d.\ standard Gaussian random variables, it is well-known that $N^2_{I,1} = (\max_{1 \leq i \leq n} N_{i,1})^2$ behaves in first order like $2 \log n$, and so the behavior of $n^{-\gamma} (1-\lambda_1) N^2_{I,1} / 2$ is governed by
\[ \exp \left( - \gamma \log n + (1-\lambda_1) \log n \right) = n^{1-\lambda_1 - \gamma}. \]
Since $\gamma < 1-\lambda_1$, we get as desired $n^{-\gamma} \max \xi_A(X_i) \ell(X_i) \Rightarrow \infty$ when $\gamma < 1 - \lambda_1$.

\subsection{Step $4$: $1 \leq \kappa_* \leq 1/\lambda_1$ when $V \subset U$} \label{sub:step-4}

Assume now that $V \subset U$, and let us prove that $1 \leq \kappa_* \leq 1/\lambda_1$, i.e., $0 \leq \gamma_* \leq 1 - \lambda_1$. That $\gamma_* \geq 0$ is obvious. To prove that $\gamma_* \leq 1-\lambda_1$, we have to prove that
\[ \frac{1}{n^\gamma} \max_{1 \leq i \leq n} \xi_A(X_i) \ell(X_i) \Rightarrow 0 \]
for $\gamma > 1-\lambda_1$. But this readily comes from the inequality
\[ \frac{1}{n^\gamma} \max_{1 \leq i \leq n} \xi_A(X_i) \ell(X_i) \leq \frac{1}{n^\gamma} \max_{1 \leq i \leq n} \ell(X_i) \]
and the fact that $n^{-\gamma} \max_{1 \leq i \leq n} \ell(X_i) \Rightarrow 0$ when $\gamma > 1-\lambda_1$, which we have established above.

\section{Proof of Proposition~\ref{prop:range}}

We now consider the case where $A = \{x: \lvert u^\top x \rvert \leq K\}$ for some $K$ with $\inf_d K > 0$ and $u \in \R^d$ with $\lVert u \rVert = 1$, and where $V = {\rm span}(u) = U$ and any fixed $\lambda_1 \in (0,1)$. It is clear that Assumptions~\ref{ass:FID} and~\ref{ass:Sigma} are satisfied, and that $\inf_d p > 0$.

Let us now consider that $K = 1 + \sqrt{2 \alpha \lambda_1 \log n}$ for some fixed $\alpha \in [0,1]$ and show that Assumption~\ref{ass:MW} is satisfied with $\gamma_* = \alpha (1-\lambda_1)$. In this case, we have
\begin{align*}
	\max_{1 \leq i \leq n} \xi_A(X_i) \ell(X_i) & = \max_{1 \leq i \leq n} \Indicator{\langle X_i, u \rangle^2 \leq K^2} \exp \left( \frac{1}{2} \left( \frac{1}{\lambda_1} - 1 \right) \langle X_i, u \rangle^2 \right)\\
	& = \exp \left( \frac{1}{2} \left( 1 - \lambda_1 \right) \max_{1 \leq i \leq n} \Indicator{N_i^2 \leq K^{\prime 2}} N_i^2 \right)
\end{align*}
with $N_i = \lambda_1^{-1/2} \langle X_i, u \rangle$ and $K' = \lambda_1^{-1/2} K$. Thus, in order to prove the result, it is enough to prove that
\[ \left( 1 - \lambda_1 \right) \max_{1 \leq i \leq n} \Indicator{N_i^2 \leq K^{\prime 2}} N_i^2 - 2 \gamma \log n \Rightarrow \left\{ \begin{array}{ll}
		-\infty & \text{if } \gamma > \alpha (1-\lambda_1),\\
		+\infty & \text{if } \gamma < \alpha (1-\lambda_1),
	\end{array} \right. \]
i.e., that for any $x \in \R$ we have
\begin{equation} \label{eq:goal-A}
	\P \left( (1-\lambda_1) \max_{1 \leq i \leq n} \Indicator{N_i^2 \leq K^{\prime 2}} N_i^2 - 2 \gamma \log n \leq x \right) \to \left\{ \begin{array}{ll}
		1 & \text{if } \gamma > \alpha (1-\lambda_1),\\
		0 & \text{if } \gamma < \alpha (1-\lambda_1).
	\end{array} \right.
\end{equation}
The result for $\alpha = 0$ is obvious since in this case the maximum is bounded (by $K^{\prime 2} = 1$), so assume for the rest of the proof that $\alpha > 0$. Since the $N_i$'s are i.i.d.\ standard Gaussian random variables in dimension one, for any $x \in \R$ we have
\begin{align*}
	\P \Big( (1-\lambda_1) \max_{1 \leq i \leq n} \Indicator{N_i^2 \leq K^{\prime 2}} N_i^2 & - 2 \gamma \log n \leq x \Big)\\
	& = \left( 1 - \P \left( N^2_1 \Indicator{N^2_1 \leq K^{\prime 2}} \geq y \right) \right)^n\\
	& = \left( 1 - \P \left( y \leq N^2_1 \leq K^{\prime 2} \right) \right)^n
\end{align*}
with $y = (1-\lambda_1)^{-1} (x + 2 \gamma \log n)$. Thus,~\eqref{eq:goal-A} is equivalent to
\begin{equation} \label{eq:goal-A-2}
	n \P \left( y \leq N^2_1 \leq K^{\prime 2} \right) \to \left\{ \begin{array}{ll}
		0 & \text{if } \gamma > \alpha (1-\lambda_1),\\
		+\infty & \text{if } \gamma < \alpha (1-\lambda_1).
	\end{array} \right.
\end{equation}
Consider the case $\gamma > \alpha (1-\lambda_1)$. Since $y \sim 2 \gamma (1-\lambda_1)^{-1} \log n$ and $K^{\prime 2} \sim 2 \alpha \log n$, for $n$ large enough we have $y > K^{\prime 2}$ in which case $\P(y \leq N^2_1 \leq K^{\prime 2}) = 0$. Consider now the case $\gamma < \alpha (1-\lambda_1)$. We have
\[ \P \left( N^2_1 \geq y \right) = 2 \P \left( N_1 \geq \sqrt{y} \right) \sim \frac{2 e^{-y/2}}{\sqrt{2\pi y}} \gg \frac{2 e^{-K^{\prime 2}/2}}{\sqrt{2\pi} K'} = \P \left( N^2_1 \geq K^{\prime 2} \right), \]
implying that
\[ n \P \left( y \leq N^2_1 \leq K^{\prime 2} \right) \sim n \P \left( y \leq N^2_1 \right) \sim \sqrt{\frac{2}{\pi}} \exp \left( \log y - y/2 + \log n \right) \]
and since $y/2 \sim \gamma (1-\lambda_1)^{-1} \log n$ with $\gamma (1 - \lambda_1)^{-1} < \alpha \leq 1$, this shows that
\[ n \P \left( y \leq N^2_1 \leq K^{\prime 2} \right) \to \infty. \]
This completes the proof of~\eqref{eq:goal-A-2} and thus of Proposition~\ref{prop:range}.

\appendix

\section{Proof of Theorem~\ref{thm:B-vH-unconditioned}} \label{appendix}

Theorem~\ref{thm:B-vH-unconditioned} is an extension of Theorem 3.21 in~\cite{Brailovskaya24-0}, which we state below.

\begin{thm}[Theorem 3.21 in~\cite{Brailovskaya24-0}]\label{cor:thm3-21}
	Let $S = \sum_{j=1}^n T_j T_j^\top$ where for each $j$, $T_j \sim N(0, \Sigma_j)$ for some covariance matrix $\Sigma_j$ in dimension $d$. Assume that the $T_j$'s are independent. Then there exists a universal, finite constant $C > 0$ such that for any $\varepsilon \in (0,1]$, we have
    \begin{multline*}
		\E \lVert S - \E S \rVert_\op \leq 
        2(1+\varepsilon) \Big \lVert\sum_{i=1}^n \Tr[\Sigma_i] \Sigma_i \Big \rVert_\op^{1/2}\\
		+ \frac{C}{\varepsilon^3} \left(\Big\lVert \sum_{i=1}^n \Sigma_i^2 \Big\rVert_\op^{1/2} + \max_{i\leq n}\Tr \Sigma_i \right)\log^3(d+n).
	\end{multline*}
\end{thm}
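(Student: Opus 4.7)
The plan is to view $Z = S - \E S = \sum_{j=1}^n X_j$ with $X_j = T_j T_j^\top - \Sigma_j$ as a sum of independent, centered, self-adjoint random matrices, and to invoke the intrinsic freeness / matrix universality machinery of~\cite{Brailovskaya24-0}. That machinery yields bounds of exactly the shape claimed, with a leading term $2(1+\varepsilon) \sigma(Z)$, where $\sigma(Z)^2 = \lVert \E(Z^2)\rVert_\op$, and an $\varepsilon^{-3}$--modulated correction depending on a secondary variance parameter $v(Z)$ and on a uniform operator-norm bound $R$ on the summands.

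First I would compute $\sigma(Z)^2$ explicitly. By independence and centering, $\E(Z^2) = \sum_j \E(X_j^2)$; writing $T_j = \Sigma_j^{1/2} g_j$ with $g_j \sim N(0, I_d)$ independent and applying Wick's formula gives $\E(T_j T_j^\top T_j T_j^\top) = \Tr(\Sigma_j) \Sigma_j + 2\Sigma_j^2$, so $\E(X_j^2) = \Tr(\Sigma_j)\Sigma_j + \Sigma_j^2$ and
\[ \sigma(Z)^2 = \Big\lVert \sum_{j=1}^n \Tr(\Sigma_j)\Sigma_j + \sum_{j=1}^n \Sigma_j^2 \Big\rVert_\op. \]
The subadditivity $\sqrt{a+b}\leq \sqrt a + \sqrt b$ on the right-hand side isolates the sharp-constant contribution $\lVert \sum_j \Tr(\Sigma_j)\Sigma_j\rVert^{1/2}_\op$ from the residual term $\lVert \sum_j \Sigma_j^2\rVert^{1/2}_\op$, which the statement absorbs into the correction.

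The next step, and the main technical hurdle, is handling the unboundedness of Gaussian summands, which prevents a direct application of the bounded-summand matrix concentration inequality. I would truncate at a level $R \asymp (\max_j \Tr(\Sigma_j))^{1/2}\log^{1/2}(d+n)$, splitting $T_j = T_j \mathbf{1}\{\lVert T_j\rVert \leq R\} + T_j \mathbf{1}\{\lVert T_j\rVert > R\}$ and decomposing $Z = Z^{\leq} + Z^{>}$ accordingly. On $Z^{\leq}$ the summands are bounded by $O(R^2) = O(\max_j \Tr(\Sigma_j) \log(d+n))$, so the sharp matrix inequality of~\cite{Brailovskaya24-0} (the intrinsic freeness result) applies and delivers
\[ \E\lVert Z^{\leq}\rVert_\op \leq 2(1+\varepsilon)\,\sigma(Z^{\leq}) + \frac{C}{\varepsilon^3}\Big( v(Z^{\leq})^{1/2}\log^{3/2}(d+n) + R^2 \log(d+n)\Big), \]
with $v(Z^{\leq}) \leq \lVert \sum_j \Sigma_j^2 \rVert_\op$ up to truncation errors, and where $\sigma(Z^{\leq})$ differs from $\sigma(Z)$ by a lower-order amount. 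On $Z^{>}$, standard Gaussian tail bounds on $\lVert T_j\rVert$ show that $\E\lVert Z^{>}\rVert_\op$ is polynomially small in $d+n$ for $C_0$ large enough, and is negligible in the final estimate. Assembling the two pieces, substituting $R$, and collecting logarithms yields the exponent $\log^3(d+n)$ on the correction term as stated.

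The main obstacle is securing the sharp constant $2$ in front of the leading $\sigma(Z)$ term: elementary matrix concentration tools (matrix Bernstein, noncommutative Khintchine) only give a constant of order $\sqrt{\log d}$ and so miss the free-probability edge of the spectrum. Accessing the constant $2$ requires the universality theorem of~\cite{Brailovskaya24-0}, which compares $Z^{\leq}$ to a matching operator-valued semicircular model and reduces the spectral edge to that of the limiting object. Verifying the hypotheses of that theorem for the truncated Wishart summands $X_j^{\leq}$, and calibrating the truncation level so that the $R$-dependent correction retains the claimed form $\max_j \Tr(\Sigma_j)\log^3(d+n)$, are where the bulk of the technical work lies.
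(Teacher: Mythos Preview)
This statement is quoted verbatim from~\cite{Brailovskaya24-0}; the present paper does not prove it but only recalls the skeleton of the argument in order to establish the conditional-Gaussian extension (Proposition~\ref{prop:extension}). Your computation of $\sigma(Z)^2 = \lVert \sum_j (\Tr(\Sigma_j)\Sigma_j + \Sigma_j^2)\rVert_\op$ via Wick's formula is correct and matches what the paper cites from~\cite[Lemma~9.9]{Brailovskaya24-0}.

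Where your proposal diverges from the actual argument is in handling the unboundedness of the Gaussian summands. You truncate the $T_j$'s, apply a bounded-summand intrinsic-freeness result to the truncated piece, and control the tail separately. The proof in~\cite{Brailovskaya24-0}, as recounted in the paper, does \emph{not} truncate. It works directly with Schatten-$2q$ moments via the inequality
\[
\E\bigl[\tfrac{1}{d}\Tr((S-\E S)^{2q})\bigr]^{1/2q} \leq 2\sigma(S) + C\,v(S)^{1/2}\sigma(S)^{1/2}\,q^{3/4} + C\,R_{2q}(S)\,q^2,
\]
which holds for arbitrary summand laws (the three terms come from comparing successively to the matched Gaussian model $G$ and the free model $X_{\text{free}}$; see~\cite[Lemma~2.5, Theorem~2.7]{Bandeira2023} and~\cite[Theorem~2.9]{Brailovskaya24-0}). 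The Gaussian assumption enters only through the explicit moment bound $R_q(S) \leq n^{1/q}\max_i(\Tr\Sigma_i + q\lVert\Sigma_i\rVert_\op)$ of~\cite[Lemma~9.10]{Brailovskaya24-0}. Choosing $q \asymp \log(d+n)$ then produces the $\log^3(d+n)$ factor directly, with no truncation or tail analysis.

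Your truncation route is a standard alternative and could in principle succeed, but it carries extra bookkeeping (re-centering after truncation, quantifying how $\sigma$ and $v$ shift, controlling the tail), and the form you wrote for the bounded-summand correction does not quite match the structure of the bounds in~\cite{Brailovskaya24-0}, so the claim that it assembles to exactly $\log^3(d+n)$ would need to be checked. The moment-method route avoids all of this by carrying $q$ as a free parameter to be optimized at the end; this is also why the paper's extension to conditional Gaussians is so short --- it only needs to redo the $\sigma$, $v$, $R_q$ computations with an extra $1/p_j$ factor, whereas a truncation argument would have to be redone from scratch.
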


We want to apply this result conditionally on $n'$ and the $P_W X'_j$ (thus, under the probability measure $\tilde \P = \P(\cdot \mid n', P_W X'_j, j = 1, \ldots, n')$), and with
\[ T_j = \sqrt{\alpha_j} P_{W_\perp} X'_j \ \text{ with } \ \alpha_j = \frac{1}{np} \ell(P_W X'_j). \]
According to Lemma~\ref{lemma:W}, $P_W X'_j$ (and thus $\alpha_j$) and $P_{W_\perp} X'_j$ are independent. Furthermore, if $V \subset U$, then $X' \sim N(0, P_{W_\perp})$ is a Gaussian vector according to Lemma~\ref{lemma:W}, and so Theorem~\ref{cor:thm3-21} applies with
\[ \Sigma_j = \Var(T_j \mid \alpha_j) = \alpha_j \Var(P_{W_\perp} X') = \alpha_j P_{W_\perp}. \]
This yields the first part of Theorem~\ref{thm:B-vH-unconditioned} when $V \subset U$. When $V \subset U_\perp$, then $P_W X'$ and $P_{W_\perp} X'$ remain independent but $P_{W_\perp} X'$ is no longer Gaussian: Lemma~\ref{lemma:W} shows that it is conditional Gaussian, so Theorem~\ref{cor:thm3-21} does not apply directly and it needs to be extended to this slightly more general case. More precisely, we have to prove the following result.

\begin{prop} \label{prop:extension}
	Let $S = \sum_{j=1}^n T_j T_j^\top$ where for each $j$, $T_j \sim N(0, \Sigma_j) \mid_{A_j}$ for some covariance matrix $\Sigma_j$ in dimension $d$ and some set $A_j$ with $p_j = \P(Z_j \in A_j)$ with $Z_j \sim N(0, \Sigma_j)$. Assume that the $T_j$'s are independent. Then the bound of Theorem~\ref{cor:thm3-21} continues to hold, but with an additive multiplicative factor $1/\min p_j$ in front of the upper bound.
\end{prop}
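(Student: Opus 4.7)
My plan is to adapt the proof of Theorem~\ref{cor:thm3-21} rather than apply it as a black box. A naive change of measure, rewriting the conditional expectation as
\[
    \E[f(T_1,\ldots,T_n)] = \frac{\E[f(Z_1,\ldots,Z_n)\prod_j \xi_{A_j}(Z_j)]}{\prod_j p_j}
\]
with $Z_j \sim N(0,\Sigma_j)$ independent, would give a prefactor $\prod_j p_j^{-1}$ rather than the desired $1/\min_j p_j$. This is insufficient in our application, where all $p_j = p$ and we need the prefactor $p^{-1}$, not $p^{-n}$, in front of the Brailovskaya--van Handel bound.

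The correct identity to use is the \emph{single-index} change-of-measure bound: for any nonneg measurable $f$ and each fixed $j$,
\[
    \E f(T_j) = p_j^{-1}\, \E[f(Z_j)\xi_{A_j}(Z_j)] \leq p_j^{-1}\, \E f(Z_j).
\]
I would then revisit the proof of Theorem~\ref{cor:thm3-21}, which controls $\lVert S - \E S \rVert_\op = \lVert \sum_j (T_j T_j^\top - \E T_j T_j^\top) \rVert_\op$ via matrix concentration tools for sums of independent centered matrices. The quantities that enter its upper bound, namely $\lVert \sum_j \Tr(\Sigma_j)\Sigma_j \rVert_\op$, $\lVert \sum_j \Sigma_j^2 \rVert_\op$, and $\max_j \Tr(\Sigma_j)$, can each be traced back to moments involving a single summand $T_j$ at a time (for instance via the identity $\E[\lVert Z_j\rVert^2 Z_j Z_j^\top] = \Tr(\Sigma_j)\Sigma_j + 2\Sigma_j^2$, which controls $\E[(T_jT_j^\top)^2]$ in the matrix concentration argument). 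Substituting the conditional moment in place of the Gaussian one produces a factor $p_j^{-1}$ per index, which can be uniformly dominated by $(\min_j p_j)^{-1}$ and pulled out of the sum or max. The final bound is then the original Brailovskaya--van Handel estimate inflated by $(\min_j p_j)^{-1}$.

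The main obstacle is verifying that the original proof can indeed be organized so that Gaussianity enters only through such per-index moments and never through joint moments across different indices (which would force a $\prod_j p_j^{-1}$ factor instead). Concretely, this amounts to checking that the proof's core inputs are matrix Bernstein/Khintchine-type inequalities applied to the independent summands $X_j = T_j T_j^\top - \E T_j T_j^\top$, combined with Wick-type moment computations performed one index at a time. Given that the authors describe the adaptation as ``not difficult'', I expect the extra factor to arise cleanly at the few places where the Gaussian density is used, and I expect the substitution to require only careful bookkeeping rather than new ideas.
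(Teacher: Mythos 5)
Your proposal is correct and follows essentially the same route as the paper's proof: both adapt the Brailovskaya--van~Handel argument rather than applying it as a black box, both recognize that a global change of measure would give the useless factor $\prod_j p_j^{-1}$, and both extract the right factor $(\min_j p_j)^{-1}$ via single-index changes of measure. The paper carries out precisely the verification you flag as the ``main obstacle'': it observes that the master moment inequality (their eq.\ on p.~74 bounding $\E\left[\tr\left((S-\E S)^{2q}\right)\right]^{1/2q}$ in terms of $\sigma(S)$, $v(S)$, $R_{2q}(S)$) is distribution-free because the universality comparison and the Gaussian/free reference models depend only on the covariance structure, and it then shows that each of $\sigma(S)$, $v(S)$, $R_q(S)$ picks up at most a factor $1/\min_j p_j$ from a per-index change of measure, using along the way a small additional device you do not mention, namely the variational characterization of the mean to replace the (unknown) conditional mean $\E[T_j T_j^\top]$ by the unconditional one $\E[Z_j Z_j^\top]$ before changing measure in the bounds on $v(S)$ and $\sigma(S)$.
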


Before proceeding to the proof of Proposition~\ref{prop:extension}, let us prove that Theorem~$5.9$ is obtained from it.

\begin{proof} [Proof of Theorem~\ref{thm:B-vH-unconditioned} based on Proposition~\ref{prop:extension}]
	The case $V \subset U$ has been discussed above, so assume that $V \subset U_\perp$. We have $T_j = \sqrt{\alpha_j} P_{W_\perp} X'_j$ with $P_{W_\perp} X'_j \sim N(0, P_{W_\perp}) \mid_A$: thus under $\tilde \P$ (in particular, conditionally on $\alpha_j$, independent from $P_{W_\perp} X'_j$), we have $T_j \sim N(0, \alpha_j^{1/4} P_{W_\perp}) \mid_{A_j}$ with $A_j = \{x: \alpha_j^{-1/2} x \in A\}$. In particular, if $Z_j \sim N(0, \alpha_j^{1/4} P_{W_\perp})$ then
	\begin{multline*}
		p_j = \tilde \P \left( Z_j \in A_j \right) = \P \left( \sqrt{\alpha_j} P_{W_\perp} Y \in A_j \mid \alpha_j \right)\\
		= \P \left( P_{W_\perp} Y \in A \mid \alpha_j \right) = \P(P_{W_\perp} Y \in A)
	\end{multline*}
	with $Y \sim N(0,I)$ independent from $\alpha_j$, and using that $P_{W_\perp}^2 = P_{W_\perp}$. Since we are in the case $V \subset U_\perp$, by definition of $W$ we have $W_\perp = U$ and so $p_j = p$ using Assumption~$2$ for the last equality
\end{proof}

In the rest of this document we thus prove Proposition~\ref{prop:extension}, which consists on a slight modification of the proof of~\cite[Theorem 3.21]{Brailovskaya24-0}. Let $\tr = \frac{1}{d} \Tr$. The proof of~\cite[Theorem 3.21]{Brailovskaya24-0} rests on the inequality
\begin{equation} \label{eq:key}
	\E\left[\frac{1}{d}\Tr\left((S - \E S)^{2q}\right) \right]^{1/2q} \leq 2\sigma(S) + C v(S)^{1/2} \sigma(S)^{1/2} q^{3/4} + C R_{2q}(S) q^2
\end{equation}
(in the rest of this document, we adopt the notation of~\cite{Brailovskaya24-0} to which the reader is referred for the various definitions), see~\cite[p.\ 74]{Brailovskaya24-0}. This bound is stated in the Gaussian case but it holds for any distribution of the $T_j$'s. Indeed, if $G$ and $X_\text{free}$ are the Gaussian model and noncommutative model with the same mean and covariance structure as $S$ (see~\cite[sections~$2.1.2$ and~$2.1.3$]{Brailovskaya24-0}), then we have
\begin{align*}
	\E\left[\tr\left((S - \E S)^{2q}\right) \right]^{1/2q} \leq & \left \lvert \E\left[\tr\left((S - \E S)^{2q}\right) \right]^{1/2q} - \E\left[\tr\left(G^{2q}\right) \right]^{1/2q} \right \rvert\\
	& \ \ \ \ \ \ + \left \lvert \E\left[\tr\left(G^{2q}\right) \right]^{1/2q} -  (\tr \otimes 
 \tau)(\lvert X_{\text{free}}\rvert^{2q})^{1/2q} \right \rvert\\
	& \ \ \ \ \ \ + \left \lvert (\tr \otimes 
 \tau)(\lvert X_{\text{free}}\rvert^{2q})^{1/2q} \right \rvert.
\end{align*}
By definition, $G$ and $X_\text{free}$ do not depend on the Gaussian assumption, and so the bounds $2 \sigma(S)$ and $C v(S)^{1/2} \sigma(S)^{1/2} q^{3/4}$ derived in~\cite[Lemma~$2.5$ and Theorem~$2.7$]{Bandeira2023} on the last two terms of the right-hand side of the previous display continue to hold. Only the bound $C R_{2q}(S) q^2$ on the first term $\E\left[\tr\left((S - \E S)^{2q}\right) \right]^{1/2q} - \E\left[\tr\left(G^{2q}\right) \right]^{1/2q}$ depends on the precise nature of $S$, but since this bound is obtained by~\cite[Theorem~$2.9$]{Brailovskaya24-0} which does not depend on the Gaussian assumption, and so we obtain that~\eqref{eq:key} does indeed hold in all generality.

The authors then control the terms $\sigma(S)$, $v(S)$ and $R_q(S)$ in Lemmas~$9.9$ and~$9.10$ of~\cite{Brailovskaya24-0}, where they prove that
\[ \sigma(S) = \Big \lVert\sum_{i=1}^n \left( \Tr[\Sigma_i] \Sigma_i + \Sigma_i^2 \right) \Big \rVert_\op^{1/2}, \ v(S) \leq \sqrt{2} \Big \lVert\sum_{i=1}^n \Sigma_i^2 \Big \rVert_\op^{1/2} \]
and
\[ R_q(S) \leq n^{1/q} \max_{i\leq n} \left( \Tr \Sigma_i + q \lVert \Sigma_i \rVert_\op \right). \]
Thus in order to prove Proposition~\ref{prop:extension}, it is enough to prove that these inequalities continue to hold when the $X'_j$'s are conditional Gaussian, up to an additive multiplicative factor $1/\min p_j$ (and, for $\sigma(S)$, the equality becomes an inequality). In other words, in order to prove Proposition~\ref{prop:extension}, it is enough to prove that, for $S$ as in Proposition~\ref{prop:extension}, we have
\[ \sigma(S) \leq \frac{1}{\min p_j} \Big \lVert\sum_{i=1}^n \left( \Tr[\Sigma_i] \Sigma_i + \Sigma_i^2 \right) \Big \rVert_\op^{1/2}, \ v(S) \leq \frac{\sqrt{2}}{\min p_j} \Big \lVert\sum_{i=1}^n \Sigma_i^2 \Big \rVert_\op^{1/2} \]
and
\[ R_q(S) \leq \frac{1}{\min p_j} n^{1/q} \max_{i\leq n} \left( \Tr \Sigma_i + q \lVert \Sigma_i \rVert_\op \right). \]

We start with the extension for $R_q(S)$ which is straightforward. Following the proof of~\cite[Lemma 9.10]{Brailovskaya24-0}, we have
\[ R_q(S) \leq 2 n^{1/q} \max_{j \leq n} \E \left( \lVert Z_j \rVert_\op^{2q} \mid Z_j \in A_j \right)^{1/q} \leq 2 n^{1/q} \max_{j \leq n} \frac{1}{p_j^{1/q}} \E \left( \lVert Z_j \rVert_\op^{2q} \right)^{1/q} \]
and since $p_j \leq p_j^{1/q}$ since $q \geq 1$, we obtain the result.

Let us now prove the desired bound on $v(S)$. By definition,
\[ v(S)^2 = \sup_{(\Tr \lvert M \rvert)^2 \leq 1} \E \left[  \left( \Tr[M (S - \E S) ]\right)^2 \right]. \]
We have
\begin{align*}
	\E \left\{ \left[ \Tr \left( M (S - \E S) \right) \right]^2 \right\} &= \E \left\{ \left[ \Tr \left( \sum_{j=1}^n (M T_j T_j^\top - \E(M T_j T_j^\top) \right) \right]^2 \right\}\\
	&= \E \left\{ \left[ \sum_{j=1}^n \left( \Tr  (M T_j T_j^\top - \E(M T_j T_j^\top) \right) \right]^2 \right\}\\
	&= \sum_{j=1}^n \E \left[ \left( \Tr  (M T_j T_j^\top) - \E(\Tr(M T_j T_j^\top)) \right)^2 \right]
\end{align*}
using the definition of $S$ for the first equality, the linearity of the trace for the second one, and the fact that the $T_j$ are independent, so that the variance of the sum is the sum of variance, for the third one. If $Z_j \sim N(0, \Sigma_j)$, we have
\begin{multline*}
	\E \left[ \left( \Tr  (M T_j T_j^\top) - \E(\Tr(M T_j T_j^\top)) \right)^2 \right]\\
	= \E \left[ \left( \Tr  (M Z_j Z_j^\top) - \E(\Tr(M Z_j Z_j^\top) \mid Z_j \in A_j) \right)^2 \mid Z_j \in A_j \right]
\end{multline*}
and so the variational characterization of the mean gives that
\begin{multline*}
	\E \left[ \left( \Tr  (M T_j T_j^\top) - \E(\Tr(M T_j T_j^\top)) \right)^2 \right]\\
	\leq \E \left[ \left( \Tr  (M Z_j Z_j^\top) - \E(\Tr(M Z_j Z_j^\top)) \right)^2 \mid Z_j \in A_j \right]
\end{multline*}
which leads to
\[ \E \left[ \left( \Tr  (M T_j T_j^\top) - \E(\Tr(M T_j T_j^\top)) \right)^2 \right] \leq \frac{1}{p_j} \E \left[ \left( \Tr  (M Z_j Z_j^\top) - \E(\Tr(M Z_j Z_j^\top)) \right)^2 \right]. \]
This bound allows to resume the original proof in the Gaussian setting, with no conditioning, at the expense of an additional multiplicative $1/\min p_j$ term.

Finally, the bound on $\sigma(S)$ proceeds with the same arguments as for $v(S)$, but in a multi-dimensional case. Indeed, we have
\begin{align*}
	\sigma(S)^2 & = \left \lVert \E \left[ \left( S - \E(S) \right)^2 \right] \right \rVert_\op\\
	& = \left \lVert \sum_{i=1}^n \E \left[ \left( T_j T_j^\top - \E(T_j T_j^\top) \right)^2 \right] \right \rVert_\op\\
	& = \left \lVert \sum_{i=1}^n \E \left[ \left( Z_j Z_j^\top - \E(Z_j Z_j^\top \mid Z_j \in A_j) \right)^2 \mid Z_j \in A_j \right] \right \rVert_\op.
\end{align*}
The following lemma is a multi-dimensional generalization of the inequality used previously to control $v(S)$.

\begin{lemma}
	For any random matrix $M$ and any event $E$, we have
	\[ \E \left[ \left( M - \E(M \mid E)\right)^2 \mid E \right] \leq \frac{1}{\P(E)} \E \left[ \left( M - \E(M)\right)^2 \right] \]
	where $A \leq B$ means that $B-A$ is positive.
\end{lemma}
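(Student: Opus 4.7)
The plan is to establish the claim as a matrix analogue of the classical scalar variance-minimization identity $\E[(X-c)^2] = \E[(X-\E X)^2] + (\E X - c)^2$. I will first prove the key identity that for a symmetric random matrix $M$ with finite second moment and any deterministic symmetric matrix $C$,
\[ \E[(M-C)^2] = \E[(M-\E M)^2] + (\E M - C)^2. \]
This is a direct expansion: both sides equal $\E[M^2]$ plus cross terms. Writing $(M-C)^2 = M^2 - MC - CM + C^2$ and taking expectations gives $\E[M^2] - (\E M) C - C (\E M) + C^2$; the same computation for $C = \E M$ gives $\E[M^2] - (\E M)^2$; subtracting yields $C^2 - (\E M)C - C(\E M) + (\E M)^2 = (\E M - C)^2$. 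Because $\E M - C$ is symmetric (both $M$ and $C$ are), $(\E M - C)^2 \succeq 0$, which gives the variational inequality $\E[(M - \E M)^2] \leq \E[(M - C)^2]$.

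Next, I would apply this identity conditionally on the event $E$, viewing $\E(\cdot\mid E)$ as the expectation under the probability measure $\P(\cdot \mid E)$. This yields, for any deterministic symmetric $C$,
\[ \E[(M - \E(M\mid E))^2 \mid E] \leq \E[(M - C)^2 \mid E]. \]
Choosing $C = \E M$ gives
\[ \E[(M - \E(M\mid E))^2 \mid E] \leq \E[(M - \E M)^2 \mid E]. \]

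Finally, I would decompose the unconditional expectation using the law of total probability:
\[ \E[(M - \E M)^2] = \P(E)\, \E[(M - \E M)^2 \mid E] + \P(E^c)\, \E[(M - \E M)^2 \mid E^c]. \]
Since $(M - \E M)^2 \succeq 0$ pointwise, the second term on the right is positive semidefinite, so dropping it gives $\P(E)\, \E[(M - \E M)^2 \mid E] \leq \E[(M - \E M)^2]$. Combining the two bounds yields the lemma. The main subtlety is simply being careful with noncommutativity: the cross terms in the expansion do not simplify by commutation, but their sum still produces the perfect square $(\E M - C)^2$, and positive semidefiniteness of this square requires symmetry of $\E M - C$ — which is automatic in our application since $M = Z_j Z_j^\top$ is symmetric and the chosen $C$ is a symmetric conditional mean. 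No other obstacle arises.
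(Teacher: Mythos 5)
Your proof is correct and takes essentially the same approach as the paper: both establish the variance-minimization identity (the paper writes it directly for $C = \E M$ under $\P(\cdot\mid E)$, you first prove it for general symmetric $C$ and then specialize), and both then apply the elementary bound $\P(E)\,\E[\,\cdot\mid E] \leq \E[\,\cdot\,]$ for a positive semidefinite integrand. No gap.
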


\begin{proof}
	We have
	\[ \E \left[ \left( M - \E(M)\right)^2 \mid E \right] - \E \left[ \left( M - \E(M \mid E)\right)^2 \mid E \right] = \left[ \E(M) - \E(M \mid E) \right]^2 \]
	implying that $\E \left[ \left( M - \E(M \mid E)\right)^2 \mid E \right] \leq \E \left[ \left( M - \E(M)\right)^2 \mid E \right]$.
\end{proof}

Thus
\begin{multline*}
	\E \left[ \left( Z_j Z_j^\top - \E(Z_j Z_j^\top \mid Z_j \in A_j) \right)^2 \mid Z_j \in A_j \right]\\
	\leq \frac{1}{\min p_j} \E \left[ \left( Z_j Z_j^\top - \E(Z_j Z_j^\top) \right)^2 \right]
\end{multline*}
and so
\begin{multline*}
	\sum_j \E \left[ \left( Z_j Z_j^\top - \E(Z_j Z_j^\top \mid Z_j \in A_j) \right)^2 \mid Z_j \in A_j \right]\\
	\leq \frac{1}{\min p_j} \sum_j \E \left[ \left( Z_j Z_j^\top - \E(Z_j Z_j^\top) \right)^2 \right].
\end{multline*}
Since $\lVert A \rVert_\op \leq \lVert B \rVert_\op$ when $A,B$ are positive with $A \leq B$, we obtain
\[ \sigma(S)^2 \leq \frac{1}{\min p_j} \left \lVert \sum_{i=1}^n \E \left[ \left( Z_j Z_j^\top - \E(Z_j Z_j^\top) \right)^2 \right] \right \rVert_\op \]
and so we are back to the Gaussian setting.


\begin{thebibliography}{10}

\bibitem{agapiou_importance_2017}
S.~Agapiou, O.~Papaspiliopoulos, D.~Sanz-Alonso, and A.~M. Stuart.
\newblock Importance sampling: Intrinsic dimension and computational cost.
\newblock {\em Statistical Science}, 32(3):405--431, 2017.

\bibitem{BAIK20061382}
Jinho Baik and Jack~W. Silverstein.
\newblock Eigenvalues of large sample covariance matrices of spiked population
  models.
\newblock {\em Journal of Multivariate Analysis}, 97(6):1382--1408, 2006.

\bibitem{Bandeira2023}
Afonso~S. Bandeira, March~T. Boedihardjo, and Ramon van Handel.
\newblock Matrix concentration inequalities and free probability.
\newblock {\em Inventiones mathematicae}, 234(1):419--487, Oct 2023.

\bibitem{bassamboo_portfolio_2008}
Achal Bassamboo, Sandeep Juneja, and Assaf Zeevi.
\newblock Portfolio {Credit} {Risk} with {Extremal} {Dependence}: {Asymptotic}
  {Analysis} and {Efficient} {Simulation}.
\newblock {\em Operations Research}, 56(3):593--606, June 2008.

\bibitem{beh2023insight}
Jason Beh, Yonatan Shadmi, and Florian Simatos.
\newblock {Insight from the Kullback--Leibler divergence into adaptive
  importance sampling schemes for rare event analysis in high dimension}.
\newblock {\em The Annals of Applied Probability}, 35(2):1083--1124, 2025.

\bibitem{benaych-georges_eigenvalues_2011}
Florent Benaych-Georges and Raj~Rao Nadakuditi.
\newblock The eigenvalues and eigenvectors of finite, low rank perturbations of
  large random matrices.
\newblock {\em Advances in Mathematics}, 227(1):494--521, May 2011.

\bibitem{Brailovskaya24-0}
Tatiana Brailovskaya and Ramon van Handel.
\newblock Universality and sharp matrix concentration inequalities.
\newblock {\em Geometric and Functional Analysis}, 34(6):1734--1838, Dec 2024.

\bibitem{capitaine2016spectrumdeformedrandommatrices}
Mireille Capitaine and Catherine Donati-Martin.
\newblock Spectrum of deformed random matrices and free probability.
\newblock In {\em Advanced topics in random matrices}, volume~53 of {\em Panor.
  Synth\`eses}, pages 151--190. Soc. Math. France, Paris, 2017.

\bibitem{chan_improved_2012}
Joshua C.~C. Chan and Dirk~P. Kroese.
\newblock Improved cross-entropy method for estimation.
\newblock {\em Statistics and Computing}, 22(5):1031--1040, September 2012.

\bibitem{Paul_07}
Paul Debashis.
\newblock Asymptotics of sample eigenstructure for a large dimensional spiked
  covariance model.
\newblock {\em Statistica Sinica}, 17(4):1617--1642, 2007.

\bibitem{Durrett_2019}
Rick Durrett.
\newblock {\em Probability: Theory and Examples}.
\newblock Cambridge Series in Statistical and Probabilistic Mathematics.
  Cambridge University Press, 5 edition, 2019.

\bibitem{ELMASRI2021}
Maxime {El Masri}, J{\'e}r{\^o}me Morio, and Florian Simatos.
\newblock Improvement of the cross-entropy method in high dimension for failure
  probability estimation through a one-dimensional projection without gradient
  estimation.
\newblock {\em Reliability Engineering \& System Safety}, 216:107991, 2021.

\bibitem{el_masri2024}
Maxime El~Masri, J{\'e}r{\^o}me Morio, and Florian Simatos.
\newblock Optimal projection for parametric importance sampling in high
  dimensions.
\newblock {\em Computo}, 2024.

\bibitem{lam2019}
Clifford Lam.
\newblock High-dimensional covariance matrix estimation.
\newblock {\em WIREs Computational Statistics}, 12(2):e1485, 2020.

\bibitem{papaioannou_improved_2019}
Iason Papaioannou, Sebastian Geyer, and Daniel Straub.
\newblock Improved cross entropy-based importance sampling with a flexible
  mixture model.
\newblock {\em Reliability Engineering \& System Safety}, 191:106564, November
  2019.

\bibitem{papaioannou_sequential_2016}
Iason Papaioannou, Costas Papadimitriou, and Daniel Straub.
\newblock Sequential importance sampling for structural reliability analysis.
\newblock {\em Structural Safety}, 62:66--75, 2016.

\bibitem{PAPAKONSTANTINOU2023}
Konstantinos~G. Papakonstantinou, Hamed Nikbakht, and Elsayed Eshra.
\newblock Hamiltonian mcmc methods for estimating rare events probabilities in
  high-dimensional problems.
\newblock {\em Probabilistic Engineering Mechanics}, 74:103485, 2023.

\bibitem{uribe2021}
Felipe Uribe, Iason Papaioannou, Youssef~M. Marzouk, and Daniel Straub.
\newblock Cross-entropy-based importance sampling with failure-informed
  dimension reduction for rare event simulation.
\newblock {\em SIAM/ASA Journal on Uncertainty Quantification}, 9(2):818--847,
  2021.

\end{thebibliography}
\end{document}